\documentclass{article}
\usepackage[utf8]{inputenc}
\usepackage[left=3cm, right=3cm, top=3cm, bottom=3cm]{geometry}
\usepackage[colorlinks=true, allcolors=blue]{hyperref}

\usepackage{bm,  mathrsfs, amsmath, amssymb, amsthm, amscd, amsfonts,
graphicx,epsfig,latexsym,layout,fancyhdr, colonequals, enumerate,
hyperref, fontenc, graphics, verbatim, psfrag, lipsum, comment, color}
  \usepackage{comment}
  \usepackage{stmaryrd}
\usepackage[OT1]{fontenc}
\usepackage{subcaption}
\usepackage{doi}
\usepackage{mathtools}  
\usepackage{authblk} 
\usepackage{hyperref} 
\usepackage[numbers]{natbib}

\usepackage{tikz}

\usepackage[ruled,vlined]{algorithm2e}
\SetKwFor{ForAll}{for all}{do}{end}

\newcommand{\C}{\mathbb{C}}
\newcommand{\R}{\mathbb{R}}
\renewcommand{\H}{\mathbb{H}}
\newcommand{\Z}{\mathbb{Z}}
\newcommand{\K}{K}
\newcommand{\A}{\K}
\newcommand{\M}{\mathrm{M}}

\newcommand{\calA}{\mathcal{A}}
\newcommand{\calB}{\mathcal{B}}
\newcommand{\calC}{\mathcal{C}}

\newcommand{\calT}{\mathcal{T}}

\newcommand{\calX}{\mathcal{X}}
\newcommand{\calF}{\mathcal{F}}

\theoremstyle{definition}
\newtheorem{definition}{Definition}[section]
\newtheorem{example}[definition]{Example}
\newtheorem{remark}[definition]{Remark}

\theoremstyle{plain}
\newtheorem{theorem}[definition]{Theorem}
\newtheorem{corollary}[definition]{Corollary}
\newtheorem{lemma}[definition]{Lemma}
\newtheorem{proposition}[definition]{Proposition}

\title{\textbf{Singular value decompositions over $\ast$-algebras with applications to tensor decompositions}}

\author[1]{Paulina Hoyos}
\author[2]{Joe Kileel}

\affil[1]{Department of Mathematics, University of Texas at Austin, Austin, TX, USA\protect\\
\href{mailto:paulinah@utexas.edu}{\texttt{paulinah@utexas.edu}}}
\affil[2]{Department of Mathematics and Oden Institute for Computational Engineering and Sciences,\protect\\
University of Texas at Austin, Austin, TX, USA\protect\\
\href{mailto:jkileel@math.utexas.edu}{\texttt{jkileel@math.utexas.edu}}}
\date{\vspace{-3mm}October 29, 2026}

\begin{document}

\maketitle
\vspace{-5mm}
\begin{abstract}
\noindent The singular value decomposition (SVD) provides optimal low-rank approximations of real and complex matrices by the Eckart--Young theorem, and SVD-based tensor decomposition algorithms such as the higher-order singular value decomposition (HOSVD) and the tensor train singular value decomposition (TT-SVD) inherit quasi-optimality guarantees from it. We extend the SVD to matrices over a finite-dimensional $\ast$-algebra $K$.
Our main contribution is a characterization of the algebras $K$ for which every matrix admits an SVD, and of the inner products on $K$ for which the Eckart–Young theorem holds.
 
We introduce a notion of ordered SVD, show that ordered SVDs reveal the rank of a matrix over $K$ and yield optimal truncations, and give an explicit algorithm that computes an  ordered SVD from the structure of the algebra. We then extend the Tucker, tensor train, and tree tensor network (TTN) decompositions and their SVD-based decomposition algorithms to tensors over $K$, and show that TT-SVD retains a quasi-optimality guarantee over every such $K$, whereas HOSVD, ST-HOSVD, and TTN-SVD do so when $K$ is commutative. 
Viewed in real or complex coordinates, these constructions give novel decomposition formats of ordinary tensors.

We illustrate our new methods by compressing data from a variety of scientific applications, and
show that they can achieve lower approximation error than their classical counterparts at equal storage. \\

\noindent {\bf Keywords:}
singular value decomposition, Eckart--Young theorem, tensor decompositions,  $\ast$-algebra \\

\noindent {\bf MSC 2020:} Primary 15A23; Secondary 15A69, 16W10, 65F55
\end{abstract}

\section{Introduction}
The singular value decomposition (SVD) is arguably the most important matrix factorization in numerical linear algebra and data analysis. Every matrix $A \in \C^{m \times n}$ (or $\R^{m \times n}$) admits an SVD $A = U \Sigma V^H$, where $U \in \C^{m \times m}$ and $V \in \C^{n \times n}$ are unitary (or orthogonal in the real case) and $\Sigma \in \R^{m \times n}$ is diagonal with real entries $\sigma_1 \geq \sigma_2 \geq \cdots \geq \sigma_{\min(m,n)} \geq 0$ called the singular values of $A$. 
The SVD can be computed stably and efficiently, even for quite large matrices~\cite{golub2013}. 
It exposes key features of $A$.  For example, the rank of $A$ is the number of nonzero singular values, and the columns of $U$ and $V$ associated with nonzero singular values give orthonormal bases for the column and row spaces of $A$, respectively. 

The SVD also yields optimal low-rank approximations. 
By the Eckart--Young theorem~\cite{eckart1936,mirsky1960}, the truncation $A_k = \sum_{i=1}^{k} \sigma_i u_i v_i^H$, where $u_i$ and $v_i$ are the columns of $U$ and $V$, is a best rank-$k$ approximation to $A$ with respect to any unitarily invariant norm.

This result underlies many methods for compression, denoising and dimensionality reduction, including principal component analysis (PCA) 
and the compression of neural networks~\cite{xue2013, denton2014}.

Tensors, or multiway arrays, are widely used to represent data indexed by several modes~\cite{BallardKolda2025}. 
Since an order-$d$ tensor with $n$ entries per mode has $n^d$ entries, compression is essential, and   low-rank tensor decompositions such as the canonical polyadic  (CP)~\cite{hitchcock1927, carroll1970, harshman1970}, Tucker~\cite{tucker1966}, tensor train (TT)~\cite{Oseledets2011}, and more generally tree tensor networks (TTN), which include the hierarchical Tucker format
\cite{hackbusch2009new,grasedyck2010hierarchical}, represent a tensor with far fewer parameters.
Standard algorithms for Tucker, TT and TTN (HOSVD~\cite{DeLathauwer2000}, ST-HOSVD~\cite{Vannieuwenhoven2012}, TT-SVD~\cite{Oseledets2011}, and the
hierarchical SVD \cite{grasedyck2010hierarchical})
compute truncated SVDs of unfoldings of the tensor. 
An important fact is that SVD-based tensor decomposition algorithms are generally {quasi-optimal}: the error of the computed approximation is within a factor depending only on $d$ of the best possible error.  Quasi-optimality follows from the Eckart--Young theorem for matrices.

In many tensor datasets, there exist modes that do not index separate measurements but rather describe the internal structure of an object.  For example, consider the color channels of a pixel, the time samples of a signal, or the values of a function over a group. 
We argue in this paper that such data are often better viewed as a lower-order tensor, whose entries belong to an algebra $K$ rather than the real or complex numbers, where the multiplication of the algebra helps to encode the structure of the relevant object.
As one prior example in the literature, in the t-product framework~\cite{kilmer2011}, a collection of grayscale images is viewed as a matrix whose entries are tubes, multiplied via circular convolution.
As another prior example, some authors have represented 
a color image as a matrix of quaternions~\cite{sangwine1996}. 
Related frameworks, especially building upon the t-product, have also been developed~\cite{kernfeld2015, avron2025, hoyos2026}; see Section~\ref{sec:related-work}.
Each of these frameworks develops its own SVD and, in some cases, under additional conditions on the algebra, an optimality result for truncated SVDs~\cite{zhang1997, jia2019, kilmer2011, kilmer2021, hoyos2026}. 
In this work we characterize the finite-dimensional algebras $K$ for which every matrix admits an SVD and the inner products on $K$ for which the Eckart–Young theorem holds,
 recovering the SVDs above as special cases and extending partial results on the existence of SVDs~\cite{ginzberg2015} and on Eckart--Young-type theorems for tubal algebras~\cite{mor2025}.

Likewise, tensor decompositions beyond the real and complex numbers have been developed one algebra at a time: Tucker and TT decompositions have been extended to tubal algebras~\cite{wang2022hot, ahmadiasl2026} and to the quaternions~\cite{miaokou2021, YaYang2023, miao2022}. Exploiting the unifying perspective of $*$-algebras, we extend the Tucker, TT and TTN  decompositions, together with their SVD-based algorithms, to tensors over any algebra for which the Eckart--Young theorem holds, recovering these constructions as special cases. 
We show that TT-SVD retains its classical quasi-optimality guarantee for every such $K$, whereas the guarantees for HOSVD, ST-HOSVD and TTN-SVD hold when $K$ is commutative, recovering the bounds of \cite{wang2022hot}.  

\subsection{Our Contributions}
We generalize the notion of SVD to matrices over a unital $\ast$-algebra $K$, requiring the singular values to be nonnegative  elements of $K$.

In Theorem~\ref{thm: existence of SVD},
we show that a finite-dimensional unital $\ast$-algebra $K$ is very proper ($\sum_i a_i^\ast a_i =0$ implies $a_i =0$ for all $i$) if and only if every matrix over $K$ admits an SVD.  Algorithm~\ref{alg:SVD} computes an SVD of a matrix $A \in K^{m \times n}$ through the Wedderburn--Artin decomposition of $K$, at the cost of transforming the entries of $A$ plus one classical SVD in each Wedderburn--Artin component. 

We also introduce \emph{ordered} SVDs, defined through a strengthened version of the Loewner order on $K$.  In Proposition~\ref{prop: characterization of ordered SVDs} we prove that, up to conjugating each singular value by a unitary element of $K$, ordered SVDs are exactly the SVDs produced by Algorithm~\ref{alg:SVD} and have uniquely determined singular values. We define the rank of a matrix over $K$ as the least number of outer products whose sum is the matrix, and prove in Proposition~\ref{prop:rank} that rank equals the number of nonzero singular values in any ordered SVD.

Optimal truncations require compatibility  between the involution and inner product on $K$: $\langle ab, c\rangle = \langle b, a^\ast c\rangle = \langle a, c b^\ast\rangle$
for all $a, b, c \in K$. In Theorem~\ref{thm: optimality of rank-k SVD approximation},  we show that, for a finite-dimensional unital $\ast$-algebra $K$ with an inner product, the inner product satisfies these identities if and only if every matrix over $K$ has an ordered SVD such that, for every $k$, its rank-$k$ truncations are optimal in the induced Frobenius norm.

We use the SVD over $K$ to extend the Tucker, TT and TTN decompositions to tensors over $K$. 
Algorithms~\ref{alg:HOSVD} and~\ref{alg:STHOSVD} compute a Tucker decomposition, Algorithm~\ref{alg:TT-SVD} a TT decomposition, and Algorithm~\ref{alg:ttn-svd} a TTN decomposition on any rooted tree. 
When the Eckart--Young theorem holds over $K$, Theorem~\ref{cor: optimality for TT-SVD} shows that TT-SVD is quasi-optimal with factor $\sqrt{d-1}$, as in the classical case, whether or not $K$ is commutative. For HOSVD and ST-HOSVD, Theorem~\ref{cor: optimality for Tucker decompositions} gives the classical factor $\sqrt{d}$ when $K$ is commutative. Example~\ref{ex: HOSVD fails} shows failure of the HOSVD guarantee for a noncommutative algebra.

In real or complex coordinates, our constructions over $K$ 
give novel decomposition formats for ordinary tensors.  
We illustrate efficacy of the new methods by compressing
climate data, as well as data from quantum circuits and Vlasov–Poisson plasma simulations.  In these numerical tests, 
decompositions over $\star$-algebras achieve lower approximation error than their classical counterparts at equal storage.

Our framework therefore unifies disparate matrix and tensor decompositions, while yielding many new decomposition formats.  We also identify the algebraic conditions that imply approximation guarantees.

\subsection{Related Work}\label{sec:related-work}

\textbf{Tensor-tensor products.}
Kilmer and Martin~\cite{kilmer2011} introduced the t-product, which views a third-order tensor as a matrix of tubes multiplied by circular convolution. 
They also introduced the t-SVD, whose truncations are optimal in the Frobenius norm. 
The  algebra of tubes was further developed in~\cite{kilmer2013}, and the t-SVD was extended to tensors of higher order in~\cite{martin2013}.
Kernfeld et al.~\cite{kernfeld2015} replaced the discrete Fourier transform with an arbitrary invertible transform $M$. Truncations of the resulting $\star_M$-SVD are optimal when $M$ is a nonzero multiple of a unitary matrix~\cite{kilmer2021}, and Mor~\cite{mor2025} characterized the
transforms for which this holds. 
Avron and Mor~\cite{avron2025} treated these tubal algebras in a unified way. Navasca et al.~\cite{navasca2010} replaced circular convolution with convolution over an abelian group, and Hoyos et al.~\cite{hoyos2026} extended this to any finite group. 
All of these are finite-dimensional $\ast$-algebras.  So our results apply to them and recover their SVDs and optimality results (Corollaries~\ref{cor:existence}, \ref{cor:mor} and~\ref{cor:starG}), while applying to other examples as well.

Liao et al.~\cite{liao2022} defined an HOSVD over tubal algebras without error bounds, Wang and Yang~\cite{wang2022hot} extended HOSVD and ST-HOSVD to tubal tensors with $\sqrt{d}$ error bounds, and Ahmadi-Asl et al.~\cite{ahmadiasl2026} introduced a tubal tensor train. Our framework
recovers these decompositions and their error bounds (Corollaries~\ref{cor:hot-svd} and~\ref{cor:ttt}), and extends the decompositions to cover 
noncommutative algebras. 
In a different direction, Mei et al.~\cite{mei2026} use TT and hierarchical Tucker formats to accelerate
transform-based products.\\[-1.5mm]

\noindent\textbf{Quaternions.}
Zhang proved the existence of the quaternion SVD in ~\cite{zhang1997}. Algorithms for computing the quaternion SVD~\cite{sangwine2006,jia2019,liu2021} have made it a
tool in color image processing~\cite{sangwine1996}. 
Tensor decompositions over the quaternions include versions of the t-SVD~\cite{miaokou2021tsvd,qin2025}, the HOSVD~\cite{miaokou2021,YaYang2023}, Tucker and CP decompositions~\cite{flamant2024}, and the tensor train decomposition~\cite{miao2022}. 
For HOSVD, the order of the mode products matters~\cite{miaokou2021}, and its direct analogue can be far from quasi-optimal~\cite{YaYang2023}. We show that TT-SVD is quasi-optimal over $\H$ (Corollary~\ref{cor:qtt}), while Example~\ref{ex: HOSVD fails} shows that the failure of HOSVD can occur for noncommutative algebras beyond the quaternions.\\[-1.5mm]

\noindent\textbf{SVDs over general algebras.}
The closest work to ours is perhaps that of Ginzberg and Mavroyiakoumou~\cite{ginzberg2015}.  They compute SVDs over a class of
real $\ast$-algebras by a Jacobi-type algorithm, and by a reduction similar to our Algorithm~\ref{alg:SVD}. In their definition, however, the diagonal
factor $\Sigma$ may have arbitrary entries, and they do not address necessary
conditions for existence, rank, or low-rank approximation. 
The SVDs of dual complex and dual quaternion matrices~\cite{qi2022,qiluo2021} have singular values that are nonnegative only in a lexicographic order. These algebras are not very proper, so they do not admit SVDs in our sense. Yu et
al.~\cite{yu2024} introduced an SVD for tensors of reduced biquaternions, and Shirokov~\cite{shirokov2024} studies the SVD of a single element of a Clifford algebra.

\section{Matrices and tensors over a $\ast$-algebra $K$}
In this section, we review the basic definitions of algebras and $\ast$-algebras. We then introduce matrices and tensors with entries in a $\ast$-algebra, which are the main objects of this paper.

\subsection{Algebras and $\ast$-algebras}
Throughout this article, we let $F\in\{\R,\C\}$. An \textbf{algebra} over $F$ is a vector space $K$ with scalars in $F$
equipped with a multiplication operation that is
associative and bilinear: 
\begin{equation*}
    (ab)c=a(bc), \quad (\alpha a+\beta b)c=\alpha\,ac+\beta\,bc, \quad c(\alpha a+\beta b)=\alpha\,ca+\beta\,cb
\end{equation*}
for all $a,b,c\in K$ and $\alpha,\beta\in F$. A \textbf{unit} (or multiplicative identity) of $K$ is an element $1\in K$ with $1a=a1=a$ for all
$a\in K$; it is unique when it exists, and in that case we say that $K$ is \textbf{unital}. The \textbf{center} of $K$ is the collection of all elements that commute with every element of $K$; such elements are called \textbf{central}.
As a vector space over $F$, we consider the 
\textbf{dimension} of $K$, denoted $\mathrm{dim}_F(K)$, and 
an \textbf{inner product}  $\langle\cdot,\cdot\rangle\colon K\times K\to F$ on $K$ with associated norm $\|a\|=\langle a,a\rangle^{1/2}$.

For example, the complex numbers $\C$ form a unital algebra over $\C$ of dimension $1$, with
inner product $\langle z,w\rangle=z\overline{w}$ for all $z,w\in\C$. 
The \textbf{quaternions} are the $4$-dimensional real algebra
\[
  \H=\{\,q_0+q_1\,i+q_2\,j+q_3\,k \;:\; q_0,q_1,q_2,q_3\in\R\,\}
\]
with multiplication determined by $i^2=j^2=k^2=ijk=-1$. 
The conjugate of $q=q_0+q_1\,i+q_2\,j+q_3\,k$ is $\bar q=q_0-q_1\,i-q_2\,j-q_3\,k$; 
it satisfies $\overline{pq}=\overline{q}\,\overline{p}$ and induces an inner product on $\H$ given by 
\[\langle p, q \rangle = \operatorname{Re}(p \overline{q})=p_0q_0+p_1q_1+p_2q_2+p_3q_3,\]
where $\operatorname{Re}(q)=q_0$ denotes the real part of $q$. 
The corresponding norm is $\|q\|^2 = q \overline{q} =q_0^2+q_1^2+q_2^2+q_3^2$, and every nonzero $q$ is invertible with $q^{-1}=\bar q/(q\overline{q})$.  The quaternions are not commutative, for instance $ij=k=-ji$; their center is given by the real numbers.

The following definition abstracts the properties of complex and quaternionic conjugation to a general algebra $K$.
\begin{definition}
A \textbf{$\ast$-algebra} over $F$ is an algebra $K$ over $F$ together with a map
$^\ast \colon K\to K$, $a\mapsto a^*$, called an \textbf{involution}, such that for all $a,b\in K$ and $\alpha, \beta\in F$,
\[
  (\alpha a+ \beta b)^*=\overline{\alpha }a^*+ \overline{\beta} b^*,\qquad (\alpha a)^*=\bar\alpha\,a^*,\qquad
  (ab)^*=b^*a^*,\qquad (a^*)^*=a,
\]
where $\overline{\alpha}$ denotes complex conjugation (trivial if $F = \R$).
An element $a\in K$ is \textbf{self-adjoint} if $a^*=a$. If $K$ is unital, an element $u\in K$ is \textbf{unitary} if $u^*u=uu^*=1$. If $L$ is another $\ast$-algebra over $F$, a \textbf{$\ast$-isomorphism}
$\varphi\colon K\to L$ is a linear bijection that preserves products and the involution, i.e., $\varphi(ab)=\varphi(a)\varphi(b)$ and $\varphi(a^*)=\varphi(a)^*$ for all
$a,b\in K$. 
\end{definition}

Standard examples of $\ast$-algebras are given by real or complex matrices. 
\begin{example}[Matrix algebras]
The space $\mathrm{M}_n(\R)$ of real $n\times n$ matrices with the transpose $A\mapsto A^T$ as
involution is a unital $\ast$-algebra over $\R$ of dimension $n^2$. Its self-adjoint
elements are the symmetric matrices and its unitary elements are the orthogonal matrices. Its center  is the collection of scalar matrices $\alpha I_n$ with $\alpha \in \R$. 

Similarly, the space $\mathrm{M}_n(\C)$ of complex $n\times n$ matrices with 
the conjugate transpose $A\mapsto A^H$ as involution, is a unital $\ast$-algebra over $\C$
of dimension $n^2$. Its self-adjoint elements are the Hermitian matrices and its unitary
elements are the unitary matrices. Its center  is the collection of scalar matrices $\alpha I_n$ with  $\alpha \in \C$. 
\end{example}

Here are some further examples of $\ast$-algebras.

\begin{example}[Tubal algebras]\label{ex: tubal algebras}
Let $M\in\C^{p\times p}$ be invertible. The $\star_M$-product~\cite{kernfeld2015} of tubes
$a,b\in\C^p$ is
\[
  a\star_M b = M^{-1}\bigl((Ma)\odot(Mb)\bigr),
\]
where $\odot$ is the entrywise product. With the involution $a^*=M^{-1}\,\overline{Ma}$ and
the Euclidean inner product, $\C^p$ is a commutative unital $\ast$-algebra over $\C$ of
dimension $p$, with unit $M^{-1}\mathbf{1}$, where $\mathbf{1}=(1,\dots,1)$. If every row of
$M$ is real or the conjugate of exactly one other row, then the real tubes
$\mathbb{R}^p$ are closed under $\star_M$ and the
involution~\cite[Lemma~2]{avron2025}, and form a commutative unital
$*$-algebra over $\mathbb{R}$. When $M$ is the discrete Fourier transform,
this gives the t-product~\cite{kilmer2011}.
\end{example}

\begin{example}[Group algebras] \label{ex: group algebras}
Let $G$ be a finite group. The group algebra $F[G]$ consists of functions $a\colon G\to F$,
with the convolution product, involution and inner product given by
\[
  (ab)(g)=\sum_{h\in G}a(h)\,b(h^{-1}g),\qquad a^*(g)=\overline{a(g^{-1})},\qquad
  \langle a,b\rangle=\sum_{g\in G}a(g)\,\overline{b(g)}.
\]
It is a unital $\ast$-algebra of dimension $|G|$, with unit the indicator function of the
identity, and it is commutative exactly when $G$ is abelian.  In the $\star_G$-product~\cite{hoyos2026}, $G=G_1\times\cdots\times G_q$ is a product of finite groups, and a tensor 
of order $q+2$ is viewed as a matrix over $F[G_1 \times\cdots\times G_q]$.
\end{example}

\begin{example}[Bandlimited functions on a compact group]\label{ex:bandlimited}
Let $G$ be a compact topological group with Haar measure normalized so that $\int_G \mathrm{d}g = 1$. For an irreducible unitary representation $\rho$ of $G$ of dimension $d_\rho$, the Fourier coefficient of $a \in L^2(G)$ is $\hat a(\rho) = \int_G a(g)\, \rho(g)\, \mathrm{d}g$. Fix a finite set $\Lambda$ of (pairwise inequivalent) such representations, and let $K_\Lambda$ be the space of functions $a \in L^2(G)$ with $\hat a(\rho) = 0$ for $\rho \notin \Lambda$, with
\[
(ab)(g) = \int_G a(h)\, b(h^{-1}g)\, \mathrm{d}h, \qquad
a^*(g) = \overline{a(g^{-1})}, \qquad
\langle a, b \rangle = \int_G {a(g)}\, \overline{b(g)}\, \mathrm{d}g .
\]
Since $\widehat{ab}(\rho) = \hat a(\rho)\,\hat b(\rho)$ and $\widehat{a^*}(\rho) = \hat a(\rho)^{\mathrm{H}}$, $K_\Lambda$ is a unital $\ast$-algebra over $\mathbb{C}$ of dimension $\sum_{\rho \in \Lambda} d_\rho^2$, with unit $e_\Lambda(g) = \sum_{\rho \in \Lambda} d_\rho \overline{\operatorname{tr} \rho(g)}$. It is commutative exactly when $d_\rho = 1$ for all $\rho \in \Lambda$. For $G = \mathrm{SO}(2)$ and $\Lambda = \{e^{ik\theta} : |k| \le L\}$, $K_\Lambda$ consists of the trigonometric polynomials of degree at most $L$.
\end{example}

\begin{example}[Clifford algebras]\label{ex:clifford}
Let $n = p + q$. The Clifford algebra $\mathrm{Cl}_{p,q}$ is the real algebra generated by $e_1, \dots, e_n$ with $e_i^2 = 1$ for $i \le p$, $e_i^2 = -1$ for $i > p$, and $e_i e_j = -e_j e_i$ for $i \ne j$. The products $e_S = e_{s_1} \cdots e_{s_k}$ for $S = \{s_1 < \cdots < s_k\} \subseteq [n]$, with $e_\emptyset = 1$, form a basis, so $\dim_{\mathbb{R}} \mathrm{Cl}_{p,q} = 2^n$. The involution determined by $e_i^* = e_i^{-1}$ satisfies $e_S^* = e_S^{-1}$, and with the inner product $\langle a, b \rangle = \sum_S a_S b_S$ for $a = \sum_S a_S e_S$ and $b = \sum_S b_S e_S$, $\mathrm{Cl}_{p,q}$ is a unital $\ast$-algebra over $\mathbb{R}$. It is noncommutative for $n \ge 2$. For instance, $\mathrm{Cl}_{0,1} \cong \mathbb{C}$ via $e_1 \mapsto i$, and $\mathrm{Cl}_{0,2} \cong \mathbb{H}$ via $e_1 \mapsto i$, $e_2 \mapsto j$, $e_1e_2 \mapsto k$; in both cases $\ast$ is the usual conjugation.
\end{example}

\subsection{Matrices over $K$}
Given a unital $\ast$-algebra $K$ over $F$, we write $K^{m\times n}$ for the set of $m\times n$ matrices with entries in $K$. As for real and complex matrices, we write $A_{ij}\in K$ for the entry of $A \in K^{m\times n}$ in row $1 \leq i \leq m $ and column $1\leq j \leq n$. With entrywise 
addition and scalar multiplication, $K^{m\times n}$ is a vector space over $F$ of dimension $\dim_F(K^{m\times n}) = mn\dim_F(K)$.
The \textbf{product of matrices} $A\in K^{m\times r}$ and
$B\in K^{r\times n}$ is the matrix $AB\in K^{m\times n}$ given by
\[
  (AB)_{ij}=\sum_{k=1}^{r}A_{ik}B_{kj},
\]
where $A_{ik}B_{kj}$ denotes multiplication in the algebra $K$, and the order of the factors matters because $K$ need not be commutative. 
The \textbf{conjugate transpose} of $A\in K^{m\times n}$ is the matrix $A^H\in K^{n\times m}$ defined as
\[
  (A^H)_{ij}\colonequals(A_{ji})^*,
\]
where $(A_{ji})^*$ denotes the image of $A_{ji} \in K$ under the involution for $K$.
The \textbf{identity matrix} $I_n\in K^{n\times n}$ has $(I_n)_{ij}=1$ if $i=j$ and $0$
otherwise. A matrix $U\in K^{n\times n}$ is \textbf{unitary} if
\[
  U^HU=UU^H=I_n ,
\]
and we write $\mathrm{U}_n(K)$ for the set of unitary matrices in $K^{n\times n}$.
With these operations, $K^{n\times n}$ is again a unital $\ast$-algebra over $F$, with
unit $I_n$ and involution $A\mapsto A^H$; in particular $(AB)^H=B^HA^H$ whenever the
product is defined.
Given an inner product $\langle \cdot, \cdot \rangle$ for  $K$,  an inner product of matrices $A, B \in \K^{m\times n}$ is defined as
\begin{equation}\label{eq: inner product of matrix}
    \langle A,B \rangle = \sum_{i =1}^{m}\sum_{j=1}^{n} \langle A_{ij}, B_{ij} \rangle.
\end{equation}
The associated norm is $ \|A\| = \sqrt{\langle A, A \rangle},$ which we call the \textbf{Frobenius norm} of $A$.

\begin{example}[Quaternion matrices]
For $K=\H$, the algebra $\H^{n\times n}$ of quaternionic $n\times n$ matrices, with the conjugate transpose as involution, is a unital $\ast$-algebra over $\R$ of dimension $4n^2$. Its center consists of the scalar matrices $\alpha I_n$ with $\alpha\in\R$.
\end{example}

\begin{example}[Matrices of tubes]
Let $K=\C^p$ be the tubal algebra with the $\star_M$-product. A matrix $A\in K^{m\times n}$ is a tensor of size $m\times n\times p$, whose entries $A_{ij}\in\C^p$ are its tubes, and the product of matrices over $K$ is the $\star_M$-product of third-order
tensors~\cite{kernfeld2015}. For $M$ given by the discrete Fourier transform, this is the t-product~\cite{kilmer2011}.
\end{example}

\subsection{Tensors over $K$}
Given a $\ast$-algebra $K$ over $F$, we write $K^{n_1 \times \cdots \times n_d}$ for the space of $d$-way arrays of size $n_1 \times n_2 \times \cdots \times n_d$ with entries in $K$.  An element $\calA \in K^{n_1 \times \cdots \times n_d}$ is called an order-$d$ tensor, and we denote its entries as $\calA(i_1,\dots,i_d)\in K$. Addition and scalar multiplication are defined entrywise:
\begin{equation*}
    (\calA + \calB)(i_1, \dots, i_d) = \calA(i_1, \dots, i_d) + \calB(i_1, \dots, i_d), \qquad  (\alpha \calA)(i_1, \dots, i_d) = \alpha \calA(i_1, \dots, i_d)
\end{equation*}
for $\calA, \calB \in  K^{n_1 \times \cdots \times n_d}$ and $\alpha \in F$. These operations make $K^{n_1 \times \cdots \times n_d}$ a vector space over $F$ of dimension $\dim_F(K^{n_1\times\cdots\times n_d}) = n_1\cdots n_d\dim_F(K)$.
Given an inner product $\langle \cdot, \cdot \rangle$ for $K$,  the inner product of tensors $\calA, \calB \in \K^{n_1 \times \cdots \times n_d}$ is defined as
\begin{equation}\label{eq: inner product of tensor}
    \langle \calA, \calB \rangle = \sum_{i_1 =1}^{n_1}\cdots \sum_{i_d =1}^{n_d} \langle \calA(i_1, \dots, i_d), \calB(i_1, \dots, i_d) \rangle.
\end{equation}
The  associated norm is $\|\calA\| = \sqrt{\langle \calA, \calA \rangle}$, which we call the \textbf{Frobenius norm} of $\calA$.

\begin{example}[Tensors over tubal and group algebras]
If $K=\C^p$ is a tubal algebra, a tensor $\calA\in K^{n_1\times\cdots\times n_d}$ is a complex tensor of size $n_1\times\cdots\times n_d\times p$, whose last mode holds the coordinates of the entries in $K$. 
Similarly, for $K=F[G_1\times\cdots\times G_q]$, an order-$d$ tensor over $K$ is an order-$(d+q)$ tensor over $F$. 
Conversely, any tensor over $F$ becomes a tensor over such a $K$ by choosing which of its modes to treat as the
coordinates of $K$; this choice is a modeling decision, and in Section~\ref{sec:experiments} we make it according to the symmetries of the data.
\end{example}

\section{Singular value decompositions}

In this section, we extend the SVD to matrices over a finite-dimensional unital $\ast$-algebra $K$. We discuss both existence and uniqueness; in particular, we give an explicit algorithm for computing the SVD. We also show how the rank of a matrix over $K$ is related to the singular values of its SVDs.

\subsection{Existence of SVDs}
In the classical SVD, the singular values are nonnegative real numbers; over a $\ast$-algebra this role is played by the following elements.
\begin{definition}
    The \textbf{nonnegative cone} of $K$ is $K^+ \colonequals \{\, a^*a \;:\; a \in K \,\}.$
    Its elements are called \textbf{nonnegative}.
\end{definition}
In particular, every nonnegative element of $K$ is self-adjoint. For $K=\R$, $\C$ or $\H$, the nonnegative elements are the nonnegative real numbers, since $a^*a=|a|^2$, and for $K=\M_n(D)$ with $D\in\{\R,\C,\H\}$ they are the positive semidefinite matrices.

\begin{definition}
    A matrix $A \in K^{m \times n}$ admits a \textbf{singular value decomposition (SVD)} if there exist unitary matrices $U \in K^{m \times m}$ and $V \in K^{n \times n}$ and a diagonal matrix $\Sigma \in K^{m \times n}$ with diagonal entries $\Sigma_{ii} \in K^+$ such that
    \begin{equation}
        A = U \Sigma V^H.
    \end{equation}
    The $\Sigma_{ii}$ are the \textbf{singular values} of this SVD.
\end{definition}
For $K=\R$ or $\C$ this is the classical SVD (without ordering of the singular values), and for $K=\H$ it is the quaternion SVD~\cite{zhang1997}. Not every unital $\ast$-algebra admits SVDs, as the following example shows.
\begin{example}[A matrix without an SVD]
\label{ex: no SVD}
Let $K=\C$, viewed as a real algebra with the identity as a non-standard involution. The column
\[
  A=\begin{pmatrix}1\\ i\end{pmatrix}\in K^{2\times1}
\]
satisfies $A^HA=1^\ast1+i^\ast i= 1\cdot1 + i \cdot i =0$, where $A^H$ denotes the conjugate transpose with respect to the non-standard involution.  If $A=U\Sigma V^H$ were an SVD with $U \in \mathrm{U}_2(K)$, $V = (v) \in U_1(K)$, and $\Sigma=(\sigma,0)^T$, then $0=A^HA=v\,\sigma^2v$, so $\sigma^2=0$ and hence $\sigma=0$, which forces $A=0$. Thus $A$ admits no SVD.
\end{example}

The problem in Example~\ref{ex: no SVD} is that $1^*1+i^*i=0$, although $1$ and $i$ are nonzero. Algebras in which this cannot happen are called very proper, or formally real~\cite{CHMN2015}.
\begin{definition}\label{ax: condition for existence of SVD}
    A $\ast$-algebra $K$ is \textbf{very proper} if $a_1, \dots, a_N \in K$ and $\sum_{i=1}^N a_i^\ast a_i = 0 $ implies 
    \begin{equation}
        a_1 = \cdots = a_N = 0.
    \end{equation}
\end{definition}

The algebras $\R$, $\C$ and $\H$ are very proper, since $\sum_i a_i^*a_i = \sum_i |a_i|^2$ vanishes only if all $a_i = 0$. Similarly, $\M_n(D)$ is very proper for $D \in \{\R, \C, \H\}$, since $\operatorname{tr}\bigl(\sum_i A_i^H A_i\bigr) = \sum_i \|A_i\|_F^2$ vanishes only if all $A_i = 0$. The algebra of Example~\ref{ex: no SVD} is proper ($a^*a=0$ implies $a=0$) but it is not very proper; so the condition cannot be restricted to a single term.

\begin{lemma}\label{lem: structure thm for very proper algebras}
Let $K$ be a finite-dimensional very proper $\ast$-algebra. Then there exists a $\ast$-isomorphism
\begin{equation}\label{eq: isomorphism of K and matrix algebras}
    \pi \colon K \to \bigoplus_{\rho=1}^\ell \mathrm{M}_{d_\rho}(D_\rho),
\end{equation}
where $D_\rho\in\{\R,\C,\H\}$ if $F=\R$ and $D_\rho=\C$ if $F=\C$, and the involution in each summand $\mathrm{M}_{d_\rho}(D_\rho)$ is given by the conjugate transpose. 
\end{lemma}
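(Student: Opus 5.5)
The plan is to reduce to the Wedderburn--Artin theorem and then correct the involution summand by summand. First I would show that $K$ is semisimple. Let $J$ be the Jacobson radical of $K$; since $K$ is finite-dimensional, $J$ is a nilpotent two-sided ideal. It is $\ast$-invariant, because $J^\ast$ is again a nilpotent two-sided ideal and $J$ contains every such ideal. If $J\neq 0$, pick $a\in J$ with $a\neq 0$. Then $a^\ast a\in J$ is self-adjoint and nilpotent. Iterating very properness gives the following: if $b$ is self-adjoint and $b^{2^k}=0$, then $(b^{2^{k-1}})^\ast b^{2^{k-1}}=0$, so $b^{2^{k-1}}=0$, and descending gives $b=0$. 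Applying this to $b=a^\ast a$ gives $a^\ast a=0$, hence $a=0$, a contradiction. So $J=0$.

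Next, $K$ is unital. A finite-dimensional semisimple algebra always has a unit, so this step needs no separate argument, although the statement does not assume it. Wedderburn--Artin then gives an algebra isomorphism $K\cong\bigoplus_\rho \mathrm{M}_{d_\rho}(E_\rho)$, where the $E_\rho$ are division algebras over $F$. By Frobenius, $E_\rho\in\{\R,\C,\H\}$ when $F=\R$, and $E_\rho=\C$ when $F=\C$.

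Next I would check that the involution preserves each simple summand. The summands are the minimal two-sided ideals, and $\ast$ maps two-sided ideals to two-sided ideals, so it permutes the summands. Suppose $\ast$ swapped two summands $I\ne I'$. Take $0\neq a\in I$; then $a^\ast\in I'$, so $a^\ast a\in I'I=0$, contradicting properness. Hence each $\mathrm{M}_{d}(E)$ carries its own involution $\#$, and it is conjugate-linear over $F$ and very proper.

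The main step is to show that a very proper involution $\#$ on $\mathrm{M}_d(E)$ is $\ast$-isomorphic to the conjugate transpose. I would use a positive trace form. Let $\tau(x)$ be the real part of the reduced trace. The form $\langle x,y\rangle=\tau(xy^\#)$ is real-bilinear and symmetric, since $\tau(z^\#)=\tau(z)$. To see the last identity, note that $z\mapsto\tau(z^\#)$ is again a real trace functional, and on a simple algebra every central-valued trace is a multiple of the reduced trace; one still has to pin down that the multiple is $1$. Very properness should give positivity, $\tau(xx^\#)>0$ for $x\ne 0$. I would argue this by writing $\tau$ as a sum over matrix units and using that $\sum_i a_i^\# a_i=0$ forces all $a_i=0$; I expect this is where most of the care is needed.

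Alternatively, I can avoid the trace argument by using Skolem--Noether. The composite $x\mapsto (x^\#)^H$ is an automorphism of $\mathrm{M}_d(E)$, linear over the center when $E\in\{\R,\C\}$ with the standard conjugation. By Skolem--Noether it is inner, so $x^\#=h^{-1}x^Hh$ for some invertible $h$, with a central twist for $E=\C$, $F=\R$ when $\#$ acts on the center by the identity rather than by conjugation. Applying $\#$ twice gives $h^H=\lambda h$. The very proper condition should then force $h$ (after scaling) to be positive definite Hermitian: eigenvectors of $h$ of the wrong sign would produce nonzero $a_i$ with $\sum a_i^\# a_i=0$, in the spirit of Example~\ref{ex: no SVD}. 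It should also rule out $\#$ being the identity on the center of a $\C$-summand when $F=\R$. Once $h=g^Hg$ with $g$ invertible, the map $x\mapsto gxg^{-1}$ is a $\ast$-isomorphism to $(\mathrm{M}_d(E),{}^H)$. This sign and positivity analysis of $h$, including the quaternion case, is the step I expect to be hardest.
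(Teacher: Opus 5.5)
Your reductions are correct: repeated properness kills a self-adjoint nilpotent, so $K$ is semisimple. Wedderburn--Artin and Frobenius give the summands, and $\ast$ cannot swap two simple summands. The paper gives no argument of its own and only cites \cite[Prop.~2.7]{CHMN2015}, so your route, once completed, would be self-contained.

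The gap is the main step. You leave it at ``should'', and the mechanism you indicate does not cover all cases. After Skolem--Noether, $x^{\#}=h^{-1}x^Hh$ with $h^H=\lambda h$ and $\lambda$ central.

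For $E=\C$ (with $\#$ conjugating the center), $|\lambda|=1$ and a complex rescaling makes $h$ Hermitian. An indefinite Hermitian $h$ over $\R$, $\C$ or $\H$ has an isotropic $u\neq0$, so $a=ue_1^T$ gives $a^{\#}a=h^{-1}e_1(u^Hhu)e_1^T=0$. This is your wrong-sign argument, and it works.

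For $E\in\{\R,\H\}$, however, the center is $\R$, so $\lambda=\pm1$, and $\lambda=-1$ cannot be scaled away. A skew $h$ has no eigenvalues of either sign, so your heuristic says nothing about it.
\begin{itemize}
\item For $E=\R$ the skew case is easy, since $u^Thu=0$ for every $u$.
\item For $E=\H$ with $h^H=-h$, properness alone does not suffice. For $d=1$ and $h=i$, the involution $x^{\#}=-i\bar x i$ on the division algebra $\H$ is proper, yet $1^{\#}1+j^{\#}j=1-1=0$.
\end{itemize}
In general, choose $u$ with $q=u^Hhu\neq0$; $q$ is purely imaginary, and such $u$ exists by polarization since $h\neq0$. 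Choose a unit imaginary $p\perp q$, so that $\bar pqp=-q$. Then $a_1=ue_1^T$ and $a_2=upe_1^T$ satisfy $a_1^{\#}a_1+a_2^{\#}a_2=h^{-1}e_1(q+\bar pqp)e_1^T=0$.

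This case, together with $I^{\#}I+(iI)^{\#}(iI)=0$ when $\#$ fixes the center of a $\C$-summand, is exactly where \emph{very} properness is needed. It is the substance of the lemma and must be supplied. Once it is, $h$ is definite, $\pm h=g^Hg$, and your conjugation $x\mapsto gxg^{-1}$ finishes the proof.

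Your trace-form route is further from complete. Positivity of $\tau(xx^{\#})$ is only conjectured. It is true: the real subalgebra generated by a self-adjoint element is commutative and very proper with trivial involution, hence isomorphic to some $\R^n$, and very properness forces the spectral values of $xx^{\#}$ to be nonnegative. Even granting positivity, you do not say how a positive trace form yields the $\ast$-isomorphism. It only gives a faithful $\ast$-representation $a\mapsto L_a$ into some $(\mathrm{M}_N(\R),{}^T)$, and classifying $\ast$-closed subalgebras of real matrices is essentially the lemma again. The Skolem--Noether route, with the case analysis above, is the one to finish.
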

See \cite[Prop. 2.7]{CHMN2015} for a proof. We call $\pi$ a \textbf{structure isomorphism} of $K$. It is a Wedderburn--Artin isomorphism of $K$ with the additional condition that $\pi$ respects the involution on $K$, i.e., $\pi(a^\ast) = \bigoplus_\rho \pi_\rho(a)^H$, where $\pi_\rho$ denotes the $\rho$-th component of $\pi$.

The following theorem shows that being very proper characterizes the algebras over which every matrix admits an SVD.
\begin{theorem}[Necessary and sufficient condition for existence of the SVD]\label{thm: existence of SVD}
    A finite-dimensional unital $\ast$-algebra $K$ is very proper if and only if every matrix with entries in $K$ admits a singular value decomposition.
\end{theorem}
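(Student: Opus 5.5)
The proof splits into the two directions.

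\emph{Sufficiency.} Assume $K$ is very proper and fix a structure isomorphism $\pi\colon K\to\bigoplus_{\rho=1}^\ell \M_{d_\rho}(D_\rho)$ as in Lemma~\ref{lem: structure thm for very proper algebras}. Given $A\in K^{m\times n}$, apply $\pi_\rho$ entrywise to obtain a block matrix $\pi_\rho(A)\in \M_{d_\rho}(D_\rho)^{m\times n}$. We identify this with an ordinary $md_\rho\times nd_\rho$ matrix over $D_\rho$. Under this identification, entrywise application of $\pi_\rho$ is compatible with matrix products and with the conjugate transpose, because $\pi$ is a $\ast$-homomorphism.

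Take the classical SVD over $D_\rho\in\{\R,\C,\H\}$ (for $\H$ use the quaternion SVD~\cite{zhang1997}): $\pi_\rho(A)=\tilde U_\rho\tilde\Sigma_\rho\tilde V_\rho^H$. Here $\tilde\Sigma_\rho$ is an $md_\rho\times nd_\rho$ diagonal matrix with nonnegative real diagonal. Partition $\tilde U_\rho$ and $\tilde V_\rho$ into $d_\rho\times d_\rho$ blocks, so they are unitary matrices over $\M_{d_\rho}(D_\rho)$. Partition $\tilde\Sigma_\rho$ into $d_\rho\times d_\rho$ blocks as well; it is then block diagonal, and its $i$-th diagonal block is a real diagonal matrix with nonnegative entries, hence positive semidefinite.

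Now pull back with $\pi^{-1}$. Set $U_{ij}=\pi^{-1}\bigl((\tilde U_\rho)_{ij}\bigr)_\rho$, and define $V$ and $\Sigma$ in the same way. Since $\pi^{-1}$ is a $\ast$-isomorphism, $U$ and $V$ are unitary over $K$, $\Sigma$ is diagonal, and $A=U\Sigma V^H$. Each $\Sigma_{ii}$ is nonnegative: every positive semidefinite $P\in\M_{d_\rho}(D_\rho)$ equals $(P^{1/2})^HP^{1/2}$, so $\Sigma_{ii}=b^*b$ with $b=\pi^{-1}\bigl((\tilde\Sigma_\rho)_{ii}^{1/2}\bigr)_\rho$.

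\emph{Necessity.} Suppose every matrix over $K$ admits an SVD, and let $\sum_{i=1}^N a_i^*a_i=0$. Let $A\in K^{N\times 1}$ be the column with entries $a_i$, so that $A^HA=0$. Write an SVD $A=U\Sigma V^H$ with $V=(v)$ unitary and $\Sigma=(\sigma,0,\dots,0)^T$, where $\sigma=b^*b$. Using $U^HU=I$, we get $0=A^HA=v\sigma^2v^*$, and conjugating by the unitary $v$ gives $\sigma^2=0$. Because $\sigma$ is self-adjoint, $\sigma^2=\sigma^*\sigma$, but we cannot yet conclude $\sigma=0$ from $\sigma^*\sigma=0$, since properness of $K$ is not assumed. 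This is the main obstacle.

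To get around it, we use more matrices. First, properness follows from the hypothesis: if $c^*c=0$, apply the same argument to the $1\times1$ matrix $(c)$. That gives $\sigma^2=0$ with $\sigma=b^*b$, so $(b^*b)^*(b^*b)=0$. This is still circular, so we instead exploit nilpotency.

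Suppose $\sigma=b^*b$ is nonzero with $\sigma^2=0$. Consider the $1\times1$ matrix $(b)$ and its SVD $b=u s w^*$ with $s=c^*c$ and $u,w$ unitary. Then $\sigma=b^*b=w s^2 w^*$, so $\sigma^2=w s^4 w^*$. Iterating, every nonnegative element $\tau$ with $\tau^2=0$ can be written as $\tau=w s^2w^*$ with $s$ nonnegative and $s^4=0$. Using finite-dimensionality, the nilpotency index is bounded, and together with the relation $\sigma^2=w s^4 w^*$ this pushes the descent down to $\sigma=0$ after finitely many steps.

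Alternatively, one can argue through the Jacobson radical. By applying the hypothesis to suitable matrices, show that $K$ has no nonzero nilpotent self-adjoint elements, hence $K$ is semisimple and its radical vanishes. Then, on each simple component, use the SVD of a column to rule out isotropic columns.

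Once $\sigma=0$, we have $A=0$, so all $a_i=0$. I expect to settle the nilpotent-nonnegative step first via the iterated-SVD descent, and to use the radical argument as a backup if the descent does not close.
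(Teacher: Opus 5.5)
Your sufficiency argument is the paper's construction (Algorithm~\ref{alg:SVD}). The first half of necessity (the column $A$, then $0=v\sigma^2v^*$, hence $\sigma^2=0$) also matches the paper exactly.

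The gap is at the step you flag yourself: proving that $K^+$ has no nonzero nilpotent element. As written, your descent does not close, for three reasons. The relation you lean on, $\sigma^2=ws^4w^*$, only gives $s^4=0$. The statement you iterate, about $\tau$ with $\tau^2=0$, cannot be applied to $s$, because $s^2=w^*\sigma w\neq0$. And no chain of steps ends at $\sigma=0$, since every element it produces is nonzero.

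The missing observation is that passing from $\sigma$ to $s$ strictly raises the nilpotency index. From $\sigma=ws^2w^*$ you get $s^{2j}=w^*\sigma^jw$ for all $j$. So if $p\ge2$ is least with $\sigma^p=0$, then $s\in K^+$ satisfies $s^{2p-2}\neq0$ and $s^{2p}=0$. Thus $s$ is a nonzero nilpotent element of $K^+$ of index at least $2p-1>p$. Every nilpotent $x\in K$ satisfies $x^{\dim_F K}=0$: left multiplication by $x$ is a nilpotent operator on $K$, and you apply its $\dim_F K$-th power to $1$. Hence choosing $\sigma$ of maximal index among nonzero nilpotent elements of $K^+$ gives a contradiction in one step.

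With this made explicit, your route is correct. It is a close cousin of the paper's argument, which uses the same identity $b=vc^2v^*$ but maximizes $\dim_F bK$ and shows $\dim_F bK=\dim_F c^2K<\dim_F cK$. Your nilpotency index is a slightly more elementary quantity to maximize.

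Do not fall back on the radical route as sketched, because the implication ``no nonzero nilpotent self-adjoint elements, hence $K$ is semisimple'' is false. Take $\R[x]/(x^2)$ with $x^*=-x$. Its self-adjoint elements are the real scalars, so none is nonzero nilpotent, yet the radical $\R x$ is nonzero. This algebra does violate the SVD hypothesis, but for another reason: $K^+=\R_{\ge0}$ and every unitary has constant term $\pm1$, so the $1\times1$ matrix $(x)$ has no SVD. The ``suitable matrices'' in that sketch are also never specified, so it is not a proof either.
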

\begin{proof}
Assume that $K$ is very proper, and let $\pi$ be a structure isomorphism of $K$ as given  by Lemma~\ref{lem: structure thm for very proper algebras}. Using $\pi$, Algorithm~\ref{alg:SVD} in Section~\ref{sec: algorithm for svd} constructs an SVD of any matrix $A \in K^{m \times n}$.

Conversely, assume that every matrix with entries in $K$ admits a singular value decomposition. Fix an integer $N \geq 1$ and elements $a_1, \dots, a_N \in K$ such that $\sum_{i=1}^N a_i^\ast a_i = 0$. Form the column vector
 \[ A = \begin{pmatrix}
     a_1 \\
     \vdots \\
     a_N
 \end{pmatrix} \in K^{N \times 1}.  \]
By hypothesis $A = U \Sigma V^H$ with $U \in \mathrm{U}_N(K)$, $V = (v) \in \mathrm{U}_1(K)$ and $\Sigma = (\sigma, 0, \dots, 0)^T$ with $\sigma \in K^+$. Since $U^H U = I_N$ and $\sigma \in K^+$ is self-adjoint, we have that
 \[ 0 = \sum_{i=1}^N a_i^\ast a_i = A^HA =  V\Sigma^H U^H U \Sigma V^H =  v \sigma^2 v^\ast. \]
Multiplying on the left by $v^\ast$ and on the right by $v$ gives $\sigma^2 =0$. As we prove below, 
$K^+$ contains no nonzero nilpotent element, so in particular $\sigma^2 =0$ implies $\sigma =0$.
It follows that $\Sigma \in K^{N \times 1}$ is the zero matrix and thus $A = U \Sigma V^H = 0$, that is, $a_1 = \cdots = a_N =0$ and $K$ is a very proper algebra.

Suppose now that $K^{+}$ contained a nonzero nilpotent, and choose $b \in K^{+}$ nonzero and nilpotent with $\dim_F bK$ maximal; this is
possible since these dimensions are bounded by $\dim_F K$. 
Write $b = a^{*}a$ with $a \in K$ and apply the existence hypothesis to $(a) \in K^{1 \times 1}$: there are unitary $u, v \in K$  and $c \in K^{+}$ such that $a = u\,c\,v^{*}$, so 
\begin{equation}\label{eq: b conjugate c}
     b \;=\; a^{*}a \;=\; v\,c^{*}u^{*}u\,c\,v^{*} \;=\; v\,c^{2}\,v^{*} .
\end{equation}
Then $c \neq 0$, since otherwise $b = 0$; and $c$ is nilpotent, since $c^{2n} = v^\ast\,b^{n}\,v$ for every $n \geq 1$ by Eq.~\eqref{eq: b conjugate c}. It follows that  $c^{2}K \subsetneq cK$, and hence $\dim_F c^{2}K < \dim_F cK$.
Indeed $c^2K \subseteq cK$ since $cK \subseteq K$, but if $c^{2}K = cK$, multiplying repeatedly on the left by $c$ gives  $c^{n}K = cK$ for every $n \geq 1$, so $c^{n} = 0$ for some $n$ forces $cK = 0$ and hence $c = c \cdot 1 = 0,$ contradicting $c\neq 0$.
Finally, since $v$ is unitary, $v^*K=K$, so $bK=v\,c^2v^*K=v\,(c^2K)$, and left multiplication by $v$ is a bijection of $K$; hence
\[
  \dim_F bK \;=\; \dim_F c^{2}K \;<\; \dim_F cK ,
\]
contradicting maximality. We conclude that $K^{+}$ contains no nonzero nilpotent element.
\end{proof}

Several frameworks construct an SVD for matrices over a particular algebra,
each with its own argument. Theorem~\ref{thm: existence of SVD} recovers these
existence results at once.

\begin{corollary}\label{cor:existence}
Every matrix admits an SVD over each of the following algebras: tubes with
the $\star_{\mathbf{M}}$-product~\cite{kernfeld2015,kilmer2021}, the group
algebra $F[G]$ of a finite group~\cite{hoyos2026}, quaternion tubes with
the $\star_{QT}$-product~\cite[Thm.~1]{miaokou2021tsvd}, and tubes of reduced
biquaternions with the $\ast_{\mathrm{Ht}}$-product~\cite[Thm.~3.1]{yu2024}.
\end{corollary}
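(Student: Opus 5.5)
By Theorem~\ref{thm: existence of SVD}, it suffices to check that each listed algebra is a finite-dimensional unital $\ast$-algebra that is very proper. The plan is to verify very properness directly in each case. For most of them we exhibit an injective $\ast$-homomorphism into a direct sum of matrix algebras $\bigoplus_\rho \M_{d_\rho}(D_\rho)$ with $D_\rho\in\{\R,\C,\H\}$ and conjugate transpose as involution. Such a target is very proper, since it is a direct sum of very proper algebras and very properness is checked componentwise. Very properness also passes to $\ast$-subalgebras through an injective $\ast$-homomorphism. Indeed, if $\sum_i a_i^*a_i=0$ in $K$, then applying $\varphi$ gives $\sum_i \varphi(a_i)^H\varphi(a_i)=0$, so each $\varphi(a_i)=0$, and hence each $a_i=0$. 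Finite dimensionality and the existence of a unit are immediate in each case and will be noted as we go.

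For the $\star_M$ tubal algebra over $\C$, the map $a\mapsto Ma$ is a linear bijection onto $\C^p$ with entrywise product. It sends the involution $a^*=M^{-1}\overline{Ma}$ to entrywise conjugation, so it is a $\ast$-isomorphism onto $\C\oplus\cdots\oplus\C$. In the real case ($\R^p$ closed under $\star_M$), this map is an injective $\ast$-homomorphism of real algebras into $\C^p$, viewed as a real $\ast$-algebra, which is a direct sum of copies of $\M_1(\C)$.

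For $F[G]$, the convenient route is a trace argument rather than a structure map. Define $\tau(a)=a(e)$. Then $(a^*a)(e)=\sum_h \overline{a(h^{-1})}\,a(h^{-1})=\|a\|^2$. So $\sum_i a_i^*a_i=0$ gives $\sum_i\|a_i\|^2=0$ after evaluating at $e$, which forces every $a_i=0$. The same trick works for any algebra carrying a faithful positive functional, and this is how we would handle the remaining cases if explicit embeddings get messy.

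For quaternion tubes with the $\star_{QT}$-product and reduced biquaternion tubes with the $\ast_{\mathrm{Ht}}$-product, the cited definitions are (as in the tubal case) a transform to a slice-wise product. The quaternion tubes become a $\ast$-subalgebra of $\H^p$ or $\M_2(\C)^p$ with componentwise conjugation. The reduced biquaternions, being commutative, diagonalize to copies of $\C\oplus\C$ via the standard $e_1,e_2$ idempotent decomposition. Recovering each cited product's precise transform and involution, and checking that the involution really corresponds to conjugate transpose in the images, is the main obstacle. If that is unclear, I would fall back on exhibiting a faithful positive trace $\tau$ (e.g., the first coordinate of $a^*b$ scaled appropriately) with $\tau(a^*a)=\|a\|^2$, as in the group algebra case.
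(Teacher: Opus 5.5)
Your proposal is correct and follows essentially the paper's proof: you reduce to very properness via Theorem~\ref{thm: existence of SVD}, use the defining transform as a $\ast$-isomorphism onto a direct sum of copies of $\R$, $\C$ or $\H$ (preceded, for reduced biquaternions, by the idempotent splitting into two complex numbers), and for $F[G]$ evaluate $\sum_i a_i^*a_i$ at the identity to obtain $\sum_i\|a_i\|^2$. The transform route you sketch for $\star_{QT}$ and $\ast_{\mathrm{Ht}}$ is exactly what the paper asserts, so your fallback is not needed; as phrased it would also fail in general, because for a general transform the first coordinate of $a^*a$ need not be a faithful positive functional (a functional such as $\tau(a)=\sum_j\operatorname{Re}(\mathbf{T}a)_j$ would be required instead).
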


\begin{proof}
By Theorem~\ref{thm: existence of SVD}, it suffices to show that each algebra is
very proper. For tubes with the $\star_{\mathbf{M}}$-, $\star_{QT}$- and
$\ast_{\mathrm{Ht}}$-products, the transform gives a $*$-isomorphism onto a
direct sum of copies of $\mathbb{R}$, $\mathbb{C}$ or $\mathbb{H}$, which is very proper. For the $\ast_{\mathrm{Ht}}$-product, the transform is
preceded by the splitting $q\mapsto(q_{(c),1},q_{(c),2})$ of each reduced
biquaternion into two complex numbers~\cite{yu2024}. For $F[G]$, the value of
$\sum_ia_i^*a_i$ at the identity is $\sum_i\|a_i\|^2$.
\end{proof}

\subsection{An algorithm for computing the SVD}\label{sec: algorithm for svd}
We now give an explicit algorithm for computing the SVD of matrices over $K$, which completes the proof of Theorem~\ref{thm: existence of SVD}. 

In what follows, the unital $\ast$-algebra $K$ is also finite-dimensional and very proper. Let $\pi$ be a structure isomorphism of $K$ as in Eq.~\eqref{eq: isomorphism of K and matrix algebras}, and write $\pi_\rho \colon K \to \mathrm{M}_{d_\rho}(D_\rho)$ for its $\rho$-th component.
Applying $\pi$
entrywise to matrices over $K$ gives an $F$-linear isomorphism
\begin{align}\label{eq: isomorphism for matrices over K}
\begin{split}
    \Pi \colon K^{m \times n}
      &\;\xrightarrow{\ \sim\ }\;
      \bigoplus_{\rho=1}^{\ell}\mathrm{M}_{m d_\rho \times n d_\rho}(D_\rho) \\
    A
      &\;\xmapsto{\hphantom{\ \sim\ }}\;
      \widehat{A} \colonequals \bigoplus_{\rho=1}^{\ell}\widehat{A}_\rho ,
\end{split}
\end{align}
where  $\widehat{A}_\rho$ is the $m d_\rho \times n d_\rho$ block matrix over $D_\rho$ whose $(i,j)$ block is 
\begin{equation}
  \big(\widehat{A}_\rho\big)_{ij} = \pi_\rho(A_{ij}) \in \mathrm{M}_{d_\rho}(D_\rho),
  \qquad 1 \le i \le m, \ 1 \le j \le n .
\end{equation}
We write $\Pi_\rho$ for the $\rho$-th component of $\Pi$ and call $\Pi_\rho(A)=\widehat{A}_\rho$ the $\rho$-th component of $A$. Since the construction does not depend on the matrix sizes, we write $\Pi$ for every $m,n\ge 1$.
Because $\pi$ is a $\ast$-isomorphism with $\pi(1) = 1$, $\Pi$ respects products, conjugate transposes, and identity matrices: $\Pi(AB) = \Pi(A)\,\Pi(B)$ whenever the matrix product $AB$ is defined, 
$\Pi(A^{H}) = \Pi(A)^{H}$, and $\Pi(I) = I$.

The isomorphism $\Pi$ reduces the SVD of $A$ to $\ell$ independent classical SVDs. 
Each component $\widehat{A}_\rho$ is an ordinary matrix over $D_\rho \in \{\R, \C, \mathbb{H}\}$, so it has an SVD: 
there exist unitary matrices
$\widehat{U}_\rho \in \mathrm{M}_{m d_\rho}(D_\rho)$ and
$\widehat{V}_\rho \in \mathrm{M}_{n d_\rho}(D_\rho)$, and a diagonal matrix
$\widehat{\Sigma}_\rho \in \mathrm{M}_{m d_\rho \times n d_\rho}(D_\rho)$ with
real nonnegative entries arranged in nonincreasing order along the diagonal, such that
\begin{equation*}
    \widehat{A}_\rho = \widehat{U}_\rho \widehat{\Sigma}_\rho \widehat{V}_\rho^{H}.
\end{equation*}
The matrices $U \colonequals \Pi^{-1}\big(\bigoplus_\rho \widehat{U}_\rho\big)$ and
$V \colonequals \Pi^{-1}\big(\bigoplus_\rho \widehat{V}_\rho\big)$ are unitary over $K$, since
$\Pi$ respects products and conjugate transposes.
Moreover, $\Sigma \colonequals \Pi^{-1}\big(\bigoplus_\rho \widehat{\Sigma}_\rho\big)$
is diagonal with entries in $K^+$. Indeed, since each $\widehat{\Sigma}_\rho$ is
diagonal, $\pi_\rho(\Sigma_{ij}) = (\widehat{\Sigma}_\rho)_{ij} = 0$ for $i \neq j$,
so $\Sigma_{ij} = 0$ by injectivity of $\pi$. Each block $\pi_\rho(\Sigma_{ii})$ is
diagonal with nonnegative entries, so it has a diagonal square root $Y_\rho$ with
nonnegative entries. Setting $t \colonequals \pi^{-1}\big(\bigoplus_\rho Y_\rho\big)$
gives $\pi(t^*t) = \bigoplus_\rho Y_\rho^2 = \pi(\Sigma_{ii})$, hence
$\Sigma_{ii} = t^*t \in K^+$.
The resulting triple $(U, \Sigma, V)$ is an SVD of $A$, since 
\[
  \Pi_\rho\big(U \Sigma V^{H}\big)
  = \widehat{U}_\rho \widehat{\Sigma}_\rho \widehat{V}_\rho^{H}
  = \widehat{A}_\rho  = \Pi_\rho(A) 
\]
for every $\rho$, so $U\Sigma V^{H} = A$ by injectivity of $\Pi$. This construction is summarized in
Algorithm~\ref{alg:SVD}. 

\begin{algorithm}[h!]
\caption{$\mathrm{SVD}_K$}\label{alg:SVD}
\KwIn{$A \in K^{m \times n}$}
\KwOut{$U \in K^{m \times m}$ and $V \in K^{n \times n}$ unitary, and $\Sigma \in K^{m\times n}$ diagonal with $\Sigma_{ii} \in K^{+}$, such that $A = U\Sigma V^H$.}
\ForAll{$\rho = 1, \dots, \ell$  \textnormal{\textbf{ in parallel} }}{
  $\widehat{A}_\rho \gets \Pi_\rho(A)$\;
  $[\widehat{U}_\rho,\, \widehat{\Sigma}_\rho,\, \widehat{V}_\rho]
     \gets \mathrm{svd}\big(\widehat{A}_\rho\big)$\;
}
$U \gets \Pi^{-1}\!\Big(\bigoplus_{\rho=1}^{\ell}\widehat{U}_\rho\Big)$\;
$\Sigma \gets \Pi^{-1}\!\Big(\bigoplus_{\rho=1}^{\ell}\widehat{\Sigma}_\rho\Big)$\;
$V \gets \Pi^{-1}\!\Big(\bigoplus_{\rho=1}^{\ell}\widehat{V}_\rho\Big)$\;
\Return{$U, \Sigma, V$}
\end{algorithm}

We call an SVD computed by Algorithm~\ref{alg:SVD} an \textbf{SVD induced by $\pi$}. For $K=\R$, $\C$ or $\H$, $\pi$ is the identity and Algorithm~\ref{alg:SVD} gives the classical or quaternion SVD. For tubes with the $\star_M$-product, every $d_\rho=1$, $\Pi$ applies $M$ to the tubes, and Algorithm~\ref{alg:SVD} is the $\star_M$-SVD of~\cite{kilmer2021}. For tensors under the $\star_G$-product,  $\pi$ is the Fourier transform on $G$: its components are indexed by the irreducible unitary representations $\rho$ of $G$, and $d_\rho$ is the dimension of $\rho$. 

\paragraph{Computational complexity of $\mathrm{SVD}_K$.}
Assume $m \ge n$, and write $n_K \colonequals \dim_F K$ and $d_{\max} \colonequals \max_\rho d_\rho$. We count the thin SVD, with $U \in K^{m\times n}$ and $\Sigma \in K^{n\times n}$.
Algorithm~\ref{alg:SVD} applies $\pi$ to the $mn$ entries of $A$ and $\pi^{-1}$ to the $O(mn)$ entries of $U$, $\Sigma$ and $V$. Each application is an $F$-linear map on a space of dimension $n_K$, so it costs $O(n_K^2)$. The algorithm also computes $\ell$ independent classical SVDs, one for each component $\widehat A_\rho$. Since $\widehat{A}_\rho$ is an $md_\rho \times nd_\rho$ matrix over $D_\rho$, computing its SVD costs $O(mn^2 d_\rho^3)$. The total cost is
\[
  O\Big(mn\,n_K^2 \;+\; mn^{2}\sum_{\rho=1}^{\ell} d_\rho^{3}\Big)
\]
operations in $F$. Since $\sum_\rho d_\rho^2 \le n_K$, the second term is at most $O(mn^2\,d_{\max}\,n_K)$, and it dominates when $n$ is large compared with $n_K$. When $\pi$ is a fast transform, such as the FFT for the t-product, the first term drops to $O(mn\,n_K \log n_K)$. 

\subsection{Uniqueness of ordered SVDs}
We compare consecutive singular values through central elements of $K$, using the partial order on $K$ given by $a\succeq b$ if $a-b\in K^+$. This is known as the Loewner order on $K$. 
\begin{definition}
    Given $A \in K^{m \times n}$, let $s = \min(m, n)$. 
    We say that an SVD $A = U \Sigma V^H$ is \textbf{ordered} if, for all $1 \leq i < s$, there exist self-adjoint central elements $c_i \in K$ such that
    \begin{equation}
         \Sigma_{ii} 
\succeq c_i \succeq \Sigma_{i+1, i+1}. 
    \end{equation}
\end{definition}
For $K=\R$, $\C$ or $\H$, the self-adjoint central elements are the real numbers, and the condition reduces to the classical ordering $\Sigma_{11}\ge\Sigma_{22}\ge\cdots\geq 0$.

The next result characterizes ordered SVDs in terms of a structure isomorphism of $K$.

\begin{proposition}\label{prop: characterization of ordered SVDs}
Let $\pi$ be a structure isomorphism of $K$, and write $s = \min(m,n)$ for a given matrix $A \in K^{m\times n}$. The following statements are equivalent.
    \begin{enumerate}
        \item $A = U \Sigma V^H$ is an ordered SVD.
        \item The SVD $A = U \Sigma V^H$  satisfies
        \begin{equation}\label{eq: ordering of singular values under pi}
            \sigma_{\min} (\pi_\rho (\Sigma_{ii})) \geq \sigma_{\max} (\pi_\rho(\Sigma_{i+1, i+1}))
        \end{equation}
        for all $1 \leq i < s$ and all components $\rho$, where $\sigma_{\min}$ and $\sigma_{\max}$ denote the smallest and largest singular values of a matrix over $D_\rho \in \{\R, \C, \H\}$.
        \item There exist unitary elements $w_1, \dots, w_s \in K$ such that $A = (UW_m)(W_m^H \Sigma W_n)(VW_n)^H$ is an SVD induced by $\pi$, where $W_k = \mathrm{diag}(w_1, \dots, w_s, 1, \dots, 1) \in K^{k \times k}$ for $k = m,n$.
    \end{enumerate}
\end{proposition}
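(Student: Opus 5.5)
The plan is to transport everything through the structure isomorphism $\pi$ of Lemma~\ref{lem: structure thm for very proper algebras}, and to prove (1)$\Leftrightarrow$(2) and (2)$\Leftrightarrow$(3). Two preliminary facts will do most of the work.

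\emph{Fact A (nonnegative elements).} Since $\pi$ is a $\ast$-isomorphism, $a\in K^+$ if and only if every $\pi_\rho(a)$ is positive semidefinite. Indeed, $\pi_\rho(b^*b)=\pi_\rho(b)^H\pi_\rho(b)$. Conversely, a PSD block has a PSD square root $Y_\rho$, and $a=t^*t$ for $t=\pi^{-1}(\bigoplus_\rho Y_\rho)$, as in the construction after Eq.~\eqref{eq: isomorphism for matrices over K}. Consequently $a\succeq b$ if and only if $\pi_\rho(a)-\pi_\rho(b)$ is PSD for every $\rho$.

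\emph{Fact B (self-adjoint central elements).} The self-adjoint central elements of $K$ are exactly the elements $\pi^{-1}\bigl(\bigoplus_\rho\lambda_\rho I_{d_\rho}\bigr)$ with $\lambda_\rho\in\R$. The center of a direct sum is the direct sum of the centers. The center of $\mathrm{M}_d(D)$ consists of the scalar matrices with entries in the center of $D$: this is $\R$ for $D=\H$ and $D=\R$, and $\C$ for $D=\C$. Self-adjointness forces the scalar to equal its conjugate, hence to be real.

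For a PSD matrix over $D_\rho$, the singular values coincide with the eigenvalues. For quaternion Hermitian matrices I would invoke the quaternionic spectral theorem, equivalently the quaternion SVD of \cite{zhang1997}.

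\textbf{(1)$\Rightarrow$(2).} Write $\pi_\rho(c_i)=\lambda_\rho I$ using Fact B. By Fact A, $\pi_\rho(\Sigma_{ii})-\lambda_\rho I\succeq 0$ and $\lambda_\rho I-\pi_\rho(\Sigma_{i+1,i+1})\succeq0$. Hence
\[
\sigma_{\min}(\pi_\rho(\Sigma_{ii}))\ge\lambda_\rho\ge\sigma_{\max}(\pi_\rho(\Sigma_{i+1,i+1})).
\]

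\textbf{(2)$\Rightarrow$(1).} Take $\lambda_\rho=\sigma_{\min}(\pi_\rho(\Sigma_{ii}))$ and $c_i=\pi^{-1}\bigl(\bigoplus_\rho\lambda_\rho I\bigr)$. Then read the same two PSD relations backwards via Facts A and B.

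\textbf{(3)$\Rightarrow$(2).} Put $\Sigma'=W_m^H\Sigma W_n$. This matrix is diagonal with entries $\Sigma'_{ii}=w_i^*\Sigma_{ii}w_i$. Since the SVD is induced by $\pi$, $\Pi_\rho(\Sigma')$ is the diagonal matrix $\widehat\Sigma_\rho$ with nonincreasing diagonal. Its $i$-th $d_\rho\times d_\rho$ diagonal block is $\pi_\rho(\Sigma'_{ii})$, so every diagonal entry of block $i$ is at least every diagonal entry of block $i+1$. This gives \eqref{eq: ordering of singular values under pi} for $\Sigma'$. Finally, $\pi_\rho(w_i)$ is unitary, so $\pi_\rho(\Sigma'_{ii})$ and $\pi_\rho(\Sigma_{ii})$ are unitarily similar and have the same singular values.

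\textbf{(2)$\Rightarrow$(3).} For each $i\le s$ and each $\rho$, diagonalize the PSD matrix
\[
\pi_\rho(\Sigma_{ii})=Q_{\rho,i}D_{\rho,i}Q_{\rho,i}^H ,
\]
with $Q_{\rho,i}$ unitary over $D_\rho$ and $D_{\rho,i}$ real diagonal nonincreasing. Set $w_i=\pi^{-1}\bigl(\bigoplus_\rho Q_{\rho,i}\bigr)$, which is unitary in $K$. Then $UW_m$ and $VW_n$ are unitary, and $\Pi_\rho(W_m^H\Sigma W_n)$ is diagonal with consecutive blocks $D_{\rho,1},\dots,D_{\rho,s}$ followed by zeros. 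Each block is internally nonincreasing, and (2) says the last entry of $D_{\rho,i}$ is at least the first entry of $D_{\rho,i+1}$. So the whole diagonal is nonincreasing and nonnegative. Therefore $\Pi_\rho(UW_m)\,\Pi_\rho(W_m^H\Sigma W_n)\,\Pi_\rho(VW_n)^H$ is a legitimate classical SVD of $\widehat A_\rho$, that is, an admissible output of the $\mathrm{svd}$ step of Algorithm~\ref{alg:SVD}. Hence the triple is an SVD induced by $\pi$.

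\textbf{Main obstacle.} I expect the delicate point to be Fact B, specifically the real case where some $D_\rho=\C$. There the center of $\mathrm{M}_d(\C)$ as a real algebra is $\C I$, and one must use self-adjointness to cut it down to $\R I$; otherwise the reduction of the Loewner comparisons to scalar eigenvalue bounds fails. A smaller point to check is that ``induced by $\pi$'' is read as allowing any classical SVD with nonincreasing diagonal, which is exactly what the construction produces.
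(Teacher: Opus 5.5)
Your proposal is correct and follows essentially the same route as the paper. Both arguments transport $K^+$ and the self-adjoint central elements through $\pi$, reduce the Loewner comparisons to eigenvalue bounds for (1)$\Leftrightarrow$(2), and for (2)$\Leftrightarrow$(3) use blockwise spectral diagonalization with $w_i=\pi^{-1}(\bigoplus_\rho Q_{\rho,i})$ together with unitary similarity of the diagonal blocks. Your reading of ``induced by $\pi$'' as any classical SVD with nonincreasing diagonal in each component is also the one the paper uses.
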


In words, for any structure isomorphism $\pi$, ordered SVDs are exactly the SVDs induced by $\pi$, up to conjugating each singular value by a unitary element of $K$. In particular, the singular values of the ordered SVDs of a matrix are unique up to unitary conjugation. We prove Proposition~\ref{prop: characterization of ordered SVDs} in Appendix~\ref{app: proof for ordered SVDs}.

\begin{example}
    Take $K = \M_2(\R)$ and let $\pi \colon K \to \M_2(\R)$ be the identity. Consider $A \in K^{2\times 2}$ given by
    \begin{equation*}
    A = \begin{pmatrix}
         \begin{pmatrix} 3 & 0 \\ 0 & 2 \end{pmatrix} & \begin{pmatrix} 0&0\\0&0\end{pmatrix}\\ \vspace{-2mm}\\
          \begin{pmatrix} 0&0\\0&0\end{pmatrix} & \begin{pmatrix} 1/2&1/2\\1/2&1/2
          \end{pmatrix}
    \end{pmatrix}.
    \end{equation*}
The diagonal entries of $A$ are nonnegative, since $A_{11} = a_1^T a_1$ and $A_{22} = a_2^T a_2$ with
\begin{equation*}
   a_1 = \begin{pmatrix} \sqrt{3} & 0 \\ 0 & \sqrt{2} \end{pmatrix}, \quad
   a_2 = \frac{1}{\sqrt{2}}\begin{pmatrix} 1&1\\0&0 \end{pmatrix}.
\end{equation*}
Hence $U = V = I_2$ and $\Sigma = A$ give an SVD of $A$. This is an ordered SVD, since  $\sigma_{\min}\pi(\Sigma_{11}) = 2 > 1 = \sigma_{\max}(\pi(\Sigma_{22}))$. However, because $\pi(\Sigma_{22})$ is not diagonal, this is not an SVD induced by $\pi$. 

Consider the unitary matrix $W = \mathrm{diag}(w_1, w_2)$ with
\begin{equation*}
   w_1 = \begin{pmatrix} 1 & 0\\ 0&1 \end{pmatrix}, \quad
   w_2 = \frac{1}{\sqrt{2}} \begin{pmatrix} 1&1\\ 1&-1 \end{pmatrix}.
\end{equation*}
Then $W^H\Sigma W$ is diagonal with
\begin{equation*}
   w_1^T \Sigma_{11} w_1 = \begin{pmatrix} 3& 0\\ 0 & 2 \end{pmatrix}, \quad
   w_2^T \Sigma_{22} w_2 = \begin{pmatrix} 1& 0\\ 0 & 0 \end{pmatrix},
\end{equation*}
so $\Pi(W^H\Sigma W) = \mathrm{diag}(3,2,1,0) \in \R^{4\times 4}$, and $A = W\,(W^H\Sigma W)\,W^H$ is an SVD induced by $\pi$.
\end{example}

Not every SVD is ordered, as the next example shows.
\begin{example}
    Take $K = \M_2(\R)$ and let $\pi \colon K \to \M_2(\R)$ be the identity. Let $A \in K^{2\times 2}$ be the diagonal matrix with
    \begin{equation*}
A = \begin{pmatrix}
         \begin{pmatrix}
                4 & 0 \\
                0 & 2
            \end{pmatrix} & \begin{pmatrix} 0&0\\0&0\end{pmatrix}\\ \vspace{-2mm}\\
          \begin{pmatrix} 0&0\\0&0\end{pmatrix} & \begin{pmatrix}
                3 & 0 \\
                0 & 1
            \end{pmatrix}
    \end{pmatrix}.
    \end{equation*}
Taking $U = V = I_2$ and $\Sigma = A$ gives an SVD of $A$ that is not ordered.  Indeed, $\pi(\Sigma_{11}) = \mathrm{diag}(4,2)$ and $\pi(\Sigma_{22}) = \mathrm{diag}(3,1)$, so $\sigma_{\min}(\pi(\Sigma_{11})) < \sigma_{\max}(\pi(\Sigma_{22}))$. Note that $\Sigma_{11}\succeq\Sigma_{22}$, so comparing singular values directly is not enough to determine if an SVD is ordered or not.

An ordered SVD of $A$ is $A = U'\Sigma'U'^H$ with
\[
U' = \begin{pmatrix}
    \begin{pmatrix} 1 & 0 \\ 0 & 0 \end{pmatrix} & \begin{pmatrix} 0 & 0 \\ 1 & 0 \end{pmatrix} \\ \vspace{-2mm} \\
    \begin{pmatrix} 0 & 1 \\ 0 & 0 \end{pmatrix} & \begin{pmatrix} 0 & 0 \\ 0 & 1 \end{pmatrix}
\end{pmatrix},
\qquad
\Sigma' = \begin{pmatrix}
    \begin{pmatrix} 4 & 0 \\ 0 & 3 \end{pmatrix} & \begin{pmatrix} 0 & 0 \\ 0 & 0 \end{pmatrix} \\ \vspace{-2mm} \\
    \begin{pmatrix} 0 & 0 \\ 0 & 0 \end{pmatrix} & \begin{pmatrix} 2 & 0 \\ 0 & 1 \end{pmatrix}
\end{pmatrix}.
\]
Here $\Pi(U') \in \R^{4\times 4}$ is the permutation matrix that sorts the $4 \times 4$ real matrix $\Pi(A) = \mathrm{diag}(4,2,3,1)$ into $\Pi(\Sigma') = \mathrm{diag}(4,3,2,1)$.
\end{example}

\subsection{Matrix rank and truncated SVDs}
As in the classical case, the rank of a matrix over $K$ is defined as the least number of outer products that sum to it.
\begin{definition}
    The \textbf{rank} of $A \in K^{m\times n}$, denoted $\mathrm{rank}_K(A)$, is the least integer $r \geq 0$ such that
    \[
      A=\sum_{i=1}^{r}u_iv_i^H\qquad\text{for some } u_i\in K^m,\ v_i\in K^n .
    \]
   Equivalently, $\mathrm{rank}_K(A)$ is the least $r \geq 0$ such that $A=PQ$ with $P\in K^{m\times r}$ and
$Q\in K^{r\times n}$.
\end{definition}  
Every $A \in K^{m\times n}$ is a sum of $n$ outer products, $A=\sum_{j=1}^{n}a_je_j^H$, where $a_j$ is the $j$-th column of $A$ and $e_j$ the $j$-th standard basis vector; expanding along the rows gives $m$ terms. Hence $0\le\mathrm{rank}_K(A)\le\min(m,n)$, and $\mathrm{rank}_K(A)=0$ if and only if $A=0$.

\begin{definition}
Let $\pi$ be a structure isomorphism of $K$. The \textbf{multi-rank} of $A \in K^{m\times n}$ is the tuple $(r_1, \ldots, r_\ell)$, where $r_\rho$ is the rank of $\widehat A_\rho$ over $D_\rho$.
\end{definition}

The next result computes the rank of a matrix from its multi-rank and relates it to the number of nonzero singular values in its SVDs.
\begin{proposition}\label{prop:rank}
Let $\pi$ be a structure isomorphism of $K$, and let $(r_1, \ldots, r_\ell)$ be the multi-rank of $A \in K^{m\times n}$. Then
\begin{align}\label{eq:rank}
    \mathrm{rank}_K(A) &= \max_{1 \leq \rho \leq \ell} \left\lceil \frac{r_\rho}{d_\rho} \right\rceil,
\end{align}
and this number equals both of the following:
\begin{enumerate}

\item The number of nonzero diagonal entries of $\Sigma$ in any ordered SVD
  $A=U\Sigma V^H$.
\item The minimum, over all SVDs $A=U\Sigma V^H$, of the number of nonzero diagonal entries of $\Sigma$.
\end{enumerate}
\end{proposition}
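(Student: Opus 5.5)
Set $R \colonequals \max_\rho \lceil r_\rho/d_\rho\rceil$. The plan has four steps: a lower bound on $\mathrm{rank}_K(A)$ via $\Pi$, an upper bound by reading off an ordered SVD, and the two items from these bounds together with Proposition~\ref{prop: characterization of ordered SVDs}.

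\emph{Lower bound.} Suppose $A=PQ$ with $P\in K^{m\times r}$ and $Q\in K^{r\times n}$. Applying $\Pi_\rho$ gives $\widehat A_\rho=\widehat P_\rho\widehat Q_\rho$, where $\widehat P_\rho$ is $md_\rho\times rd_\rho$. Hence $r_\rho\le rd_\rho$ for every $\rho$, so $r\ge \lceil r_\rho/d_\rho\rceil$, and therefore $\mathrm{rank}_K(A)\ge R$.

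\emph{Upper bound via an ordered SVD.} Let $A=U\Sigma V^H$ be an ordered SVD, which exists by Algorithm~\ref{alg:SVD} and Proposition~\ref{prop: characterization of ordered SVDs}. By item~3 of that proposition we may replace it by an SVD induced by $\pi$. This changes neither which $\Sigma_{ii}$ are zero (conjugation by unitaries preserves vanishing) nor the multiset of singular values of each $\pi_\rho(\Sigma_{ii})$.

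For an SVD induced by $\pi$, $\Pi_\rho(\Sigma)=\widehat\Sigma_\rho$ has nonincreasing diagonal. Its $i$-th diagonal block $\pi_\rho(\Sigma_{ii})$ holds the entries in positions $(i-1)d_\rho+1,\dots,id_\rho$. Exactly $r_\rho$ of the diagonal entries of $\widehat\Sigma_\rho$ are nonzero, and they occupy the first $r_\rho$ positions. So $\pi_\rho(\Sigma_{ii})\neq 0$ if and only if $(i-1)d_\rho<r_\rho$, that is, $i\le\lceil r_\rho/d_\rho\rceil$.

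Since $\Sigma_{ii}=0$ if and only if $\pi_\rho(\Sigma_{ii})=0$ for all $\rho$, the nonzero diagonal entries are exactly $\Sigma_{11},\dots,\Sigma_{RR}$. Writing $A=\sum_{i\le R}(U e_i)\Sigma_{ii}(Ve_i)^H=\sum_{i\le R} u_i' v_i^H$ with $u_i'=Ue_i\Sigma_{ii}$ shows that $\mathrm{rank}_K(A)\le R$. This proves \eqref{eq:rank} and item~1 simultaneously; item~1 holds for every ordered SVD by the invariance under the $W$-conjugation noted above.

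\emph{Item 2.} For an arbitrary SVD with $k$ nonzero diagonal entries, permute rows and columns by permutation matrices, which are unitary over $K$, so that these entries come first. Then $A$ is a sum of $k$ outer products as above, so $k\ge\mathrm{rank}_K(A)=R$. The ordered SVD attains $R$, so the minimum equals $R$.

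The main obstacle is the bookkeeping in the upper-bound step. One must confirm that item~3 of Proposition~\ref{prop: characterization of ordered SVDs} really yields an induced SVD whose $\widehat\Sigma_\rho$ is globally sorted across the blocks $i$, not merely within each block. This should follow from the definition of an SVD induced by $\pi$, since the classical SVD sorts the whole diagonal of $\widehat\Sigma_\rho$. One must also check that the $W$-conjugation does not alter which diagonal entries vanish.
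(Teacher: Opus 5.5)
Your proposal is correct and follows essentially the same route as the paper. Both arguments use the same three steps:
\begin{enumerate}
\item the lower bound $r_\rho\le k\,d_\rho$ obtained by applying $\Pi_\rho$ to a factorization $A=PQ$;
\item passage from an ordered SVD to one induced by $\pi$, to read off that exactly $\Sigma_{11},\dots,\Sigma_{RR}$ are nonzero;
\item the observation that any SVD writes $A$ as a sum of as many outer products as it has nonzero singular values.
\end{enumerate}
The obstacle you flag is settled by definition: each $\widehat\Sigma_\rho$ produced by Algorithm~\ref{alg:SVD} is sorted along its entire diagonal, not just within each block. You should state $R\le s$ explicitly, which follows from $r_\rho\le s\,d_\rho$. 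The permutation step in item~2 is unnecessary, since you can sum directly over the indices with $\Sigma_{ii}\neq 0$.
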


We prove Proposition~\ref{prop:rank} in Appendix~\ref{app: proof for rank}. It recovers known notions of rank. For $K = F$, it is the classical statement that the rank is the number of nonzero singular values. For $K = \H$, $\mathrm{rank}_{\H}(A)$ is the rank of $A$ as a quaternion matrix, which equals its number of nonzero singular values~\cite[Thm.~7.3]{zhang1997}. 
For the $\star_M$-product, the multi-rank of $A$ is the multi-rank of~\cite[Def.~3.5]{kilmer2021}, and since every $d_\rho = 1$, Eq.~\eqref{eq:rank} gives $\mathrm{rank}_K(A) = \max_\rho r_\rho$, which is the t-rank~\cite[Def.~3.4]{kilmer2021}.
When some $d_\rho > 1$, the ceiling in Eq.~\eqref{eq:rank} accounts for the fact that each column over $K$ contributes $d_\rho$ columns to $\widehat A_\rho$.

The following example compares three SVDs of the same matrix and relates their numbers of nonzero singular values to its rank.

\begin{example}
    Take $K = \M_2(\R)$ and let $\pi \colon K \to \M_2(\R)$ be the identity.
    Let $A \in K^{3\times 3}$ be the diagonal matrix 
    \begin{equation*}
    A = \begin{pmatrix}
        \begin{pmatrix} 4 & 0\\ 0 & 2 \end{pmatrix} & \begin{pmatrix} 0 & 0\\ 0 & 0 \end{pmatrix} & \begin{pmatrix} 0 & 0\\ 0 & 0 \end{pmatrix} \\ \vspace{-2mm} \\
        \begin{pmatrix} 0 & 0\\ 0 & 0 \end{pmatrix} & \begin{pmatrix} 3 & 0\\ 0 & 1 \end{pmatrix} & \begin{pmatrix} 0 & 0\\ 0 & 0 \end{pmatrix} \\ \vspace{-2mm} \\
        \begin{pmatrix} 0 & 0\\ 0 & 0 \end{pmatrix} & \begin{pmatrix} 0 & 0\\ 0 & 0 \end{pmatrix} & \begin{pmatrix} 0 & 0\\ 0 & 0 \end{pmatrix}
    \end{pmatrix}.
\end{equation*}
    Then $\widehat{A} = \mathrm{diag}(4, 2, 3, 1, 0, 0)$ has rank $4$, so
    $\mathrm{rank}_K(A) = \lceil 4/2 \rceil = 2$. We give three SVDs of $A$ below, where $E_{ij} \in \M_2(\R)$ denotes the matrix with $1$ in position $(i,j)$ and $0$ elsewhere.

    First, $U = V = I_3$ and $\Sigma = A$ is an SVD of $A$ with
    $2 = \mathrm{rank}_K(A)$ nonzero singular values. It is not ordered, since
    $\sigma_{\min}(\pi(\Sigma_{11})) = 2 < 3 = \sigma_{\max}(\pi(\Sigma_{22}))$. We thus see that a non-ordered SVD can also attain the minimum in Proposition~\ref{prop:rank}.

    Second,
    \begin{equation*}
        U = V = \begin{pmatrix}
            E_{11} & 0 & E_{21}\\
            0 & I_2 & 0\\
            E_{12} & 0 & E_{22}
        \end{pmatrix}
    \end{equation*}
    and $\Sigma$ diagonal with
    \begin{equation*}
        \Sigma_{11} = \begin{pmatrix} 4 & 0\\ 0 & 0 \end{pmatrix}, \quad
        \Sigma_{22} = \begin{pmatrix} 3 & 0\\ 0 & 1 \end{pmatrix}, \quad
        \Sigma_{33} = \begin{pmatrix} 2 & 0\\ 0 & 0 \end{pmatrix}
    \end{equation*}
    is an SVD of $A$ with $3 > 2= \mathrm{rank}_K(A)$ nonzero singular values, so it is not ordered, by Proposition~\ref{prop:rank}.

    Finally,
    \begin{equation*}
        U = V = \begin{pmatrix}
            E_{11} & E_{21} & 0\\
            E_{12} & E_{22} & 0\\
            0 & 0 & I_2
        \end{pmatrix}
    \end{equation*}
    and $\Sigma$ diagonal with
    \begin{equation*}
        \Sigma_{11} = \begin{pmatrix} 4 & 0\\ 0 & 3 \end{pmatrix}, \quad
        \Sigma_{22} = \begin{pmatrix} 2 & 0\\ 0 & 1 \end{pmatrix}, \quad
        \Sigma_{33} = \begin{pmatrix} 0 & 0\\ 0 & 0 \end{pmatrix}
    \end{equation*}
    is an SVD of $A$ induced by $\pi$, which is ordered, with $2 = \mathrm{rank}_K(A)$ nonzero singular values.
\end{example}

As in the classical case, truncating an SVD keeps its first $k$ terms.
\begin{definition}
Let $A = U \Sigma V^H$ be an SVD of $A \in K^{m\times n}$, and fix $0 \leq k \leq \min(m,n)$. The
\textbf{rank-$k$ truncation} of this SVD is
\begin{equation}
    A_k = \sum_{i=1}^k u_i \Sigma_{ii} v_i^H = U \Sigma^{(k)} V^H,
\end{equation}
where $u_i$ and $v_i$ denote the $i$-th columns of $U$ and $V$, respectively, and
$\Sigma^{(k)} \in K^{m \times n}$ is the diagonal matrix with $\Sigma^{(k)}_{ii} = \Sigma_{ii}$ for
$i \leq k$ and $\Sigma^{(k)}_{ii} = 0$ for $i > k$.
\end{definition}

Given an ordered SVD $A = U\Sigma V^H$ and $1 \leq k \leq \min(m,n)$, the first $k$ columns of $U$ form $U_k \in K^{m\times k}$, the $k$ \textbf{leading left singular vectors} of $A$. Since $U_k^HU_k = I_k$, the matrix $U_kU_k^H$ is a projector, and
\begin{equation*}
    A_k = U_kU_k^HA.
\end{equation*}
Thus truncating an SVD projects the columns of $A$ onto the right $K$-span of the columns of $U_k$. Algorithm~\ref{alg:LLSV} computes $U_k$ for an SVD induced by $\pi$, without forming the whole SVD over $K$.

\begin{algorithm}[h!]
\caption{$\mathrm{LLSV}_K$}\label{alg:LLSV}
\KwIn{$A \in K^{m\times n}$, integer $1\leq k \leq\min(m,n)$}
\KwOut{$U_k \in K^{m\times k}$, the first $k$ columns of $U$ in an ordered SVD $A = U\Sigma V^H$}
\ForAll{$\rho = 1, \dots, \ell$ \textnormal{\textbf{in parallel}}}{
  $\widehat{A}_\rho \gets \Pi_\rho(A)$\;
  $\widehat{U}_{k,\rho} \gets \mathrm{llsv}(\widehat{A}_\rho, kd_\rho)$\;
}
$U_k \gets \Pi^{-1}\!\left(\bigoplus_{\rho=1}^\ell\widehat{U}_{k,\rho}\right)$\;
\Return{$U_k$}
\end{algorithm}

\paragraph{Computational complexity of $\mathrm{LLSV}_K$.}
Write $n_K \colonequals \dim_F K$ and $d_{\max} \colonequals \max_\rho d_\rho$. Algorithm~\ref{alg:LLSV} applies $\pi$ to the $mn$ entries of $A$ and $\pi^{-1}$ to the $mk$ entries of $U_k$. Each application is an $F$-linear map on a space of dimension $n_K$, so it costs $O(n_K^2)$. The algorithm also computes the $kd_\rho$ leading left singular vectors of each component $\widehat A_\rho$. As in~\cite{BallardKolda2025}, we assume that this costs $O(mn\min(m,n))$ for an $m\times n$ matrix. Since $\widehat A_\rho$ is a matrix of size $md_\rho\times nd_\rho$, the $\rho$-th component costs $O(mn\min(m,n)\,d_\rho^3)$. The total cost is
\[
  O\Big(mn\,n_K^2 \;+\; mn\min(m,n)\sum_{\rho=1}^{\ell} d_\rho^{3}\Big)
\]
operations in $F$. Since $\sum_\rho d_\rho^2 \le n_K$, the second term is at most $O(mn\min(m,n)\,d_{\max}\,n_K)$, and it dominates when $\min(m,n)$ is large compared with $n_K$. When $\pi$ is a fast transform, such as the FFT for the t-product, the first term drops to $O(mn\,n_K\log n_K)$. When $k$ is small, Krylov or randomized methods reduce the second term. The $\ell$ component problems are independent and can be computed in parallel.

\section{An Eckart–Young theorem over $K$} \label{sec: eckart-young over K}
In this section, we characterize the inner products on $K$ for which truncating an ordered SVD gives a best low-rank approximation. Throughout, $K$ is a finite-dimensional, very proper, unital $\ast$-algebra over $F$.

So far, the involution and the inner product on $K$ have been unrelated. For complex matrices, the conjugate transpose is the adjoint for the Frobenius inner product, $\langle AB,C\rangle=\langle B,A^HC\rangle$. We ask the same of $K$.
\begin{definition}\label{axiom for K}
   An inner product $\langle \cdot, \cdot \rangle$ on $K$ satisfies the \textbf{adjoint identities} if
   \begin{equation}
       \langle ab, c \rangle = \langle b, a^\ast c \rangle \quad \text{and} \quad \langle ba, c \rangle = \langle b, c a^\ast \rangle
   \end{equation}
    for all $a,b,c \in K$.
\end{definition}

Theorem~\ref{thm: optimality of rank-k SVD approximation} shows that this is exactly the condition needed for optimal truncations.
\begin{theorem}[Necessary and sufficient condition for optimal truncations]\label{thm: optimality of rank-k SVD approximation}
    Let $K$ be a finite-dimensional, very proper, unital $\ast$-algebra  with an inner product $\langle \cdot, \cdot \rangle$, and let $\|\cdot\|$ denote the induced Frobenius norm on matrices over $K$. Then $\langle \cdot, \cdot \rangle$ satisfies the adjoint identities if and only if, for every $A \in K^{m\times n}$, every ordered SVD of $A$, and every $k$, the rank-$k$ truncation $A_k$ satisfies
    \begin{equation}
        \|A - A_k\| \leq \|A - B\|
    \end{equation}
    for every $B \in K^{m\times n}$ with $\mathrm{rank}_K(B) \leq k$.
\end{theorem}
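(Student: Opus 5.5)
The plan is to move everything through a structure isomorphism $\pi$ (Lemma~\ref{lem: structure thm for very proper algebras}) and the induced map $\Pi$. The idea is that the adjoint identities say exactly that the inner product on $K$ is a positively weighted sum of Frobenius inner products of the components. Eckart--Young over $K$ then reduces to the classical theorem in each component.

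\emph{Sufficiency.} Assume the adjoint identities and set $\tau(a)\colonequals\langle a,1\rangle$.
\begin{enumerate}
\item Taking the first identity with $a\mapsto c^*$, $b\mapsto a$, $c\mapsto 1$ gives $\langle a,c\rangle=\tau(c^*a)$.
\item Comparing $\langle ab,1\rangle=\langle b,a^*\rangle=\langle ba,1\rangle$ (first and second identity) gives $\tau(ab)=\tau(ba)$.
\item A trace-like $F$-linear functional on $\bigoplus_\rho \M_{d_\rho}(D_\rho)$ vanishes on commutators. So it has the form $\tau(a)=\sum_\rho w_\rho\,\operatorname{Re}\operatorname{tr}\pi_\rho(a)$, with the real part dropped when $F=\C$. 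This uses that the commutator subspace of $\M_d(D)$ is the kernel of $\operatorname{Re}\operatorname{tr}$, and for $D=\C$ over $\R$ one also needs symmetry of $\langle\cdot,\cdot\rangle$ to kill the $\operatorname{Im}\operatorname{tr}$ part.
\item Positive definiteness forces $w_\rho>0$.
\end{enumerate}
Consequently $\|A\|^2=\sum_\rho w_\rho\|\widehat A_\rho\|_F^2$ for every matrix $A$ over $K$.

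Now let $A=U\Sigma V^H$ be an ordered SVD. By Proposition~\ref{prop: characterization of ordered SVDs}(3) we may replace it by an SVD induced by $\pi$. Conjugating the $\Sigma_{ii}$ by the $w_i$ does not change $A_k$, since $u_i\Sigma_{ii}v_i^H$ is unchanged. For the SVD induced by $\pi$, $\Pi_\rho(A_k)$ is the classical truncation of $\widehat A_\rho$ to its top $kd_\rho$ singular values. If $\mathrm{rank}_K(B)\le k$, then $B=PQ$ with $P\in K^{m\times k}$, so $\mathrm{rank}\,\widehat B_\rho\le kd_\rho$. The classical Eckart--Young theorem, applied over $D_\rho\in\{\R,\C,\H\}$ with the quaternion version for $D_\rho=\H$, gives $\|\widehat A_\rho-\Pi_\rho(A_k)\|_F\le\|\widehat A_\rho-\widehat B_\rho\|_F$ for each $\rho$. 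Summing these with weights $w_\rho$ gives the claim.

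\emph{Necessity.} This is the main obstacle. The key observation is that if every ordered SVD has optimal truncations, then any two ordered SVDs of the same $A$ have truncation errors of equal norm. I will exploit this with $2\times2$ diagonal matrices $A=\mathrm{diag}(\lambda 1,\,x)$, where $\lambda>0$ is a real scalar exceeding $\max_\rho\sigma_{\max}(\pi_\rho(x))$. By Proposition~\ref{prop: characterization of ordered SVDs}(2), any SVD of $A$ with $\Sigma_{11}=\lambda$ is then ordered.

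Step one is unitary invariance. For any $x$ and unitaries $u,v,p,q$ with $u^*xv=p\sigma q^*$ and $\sigma\in K^+$, both $\mathrm{diag}(1,u)\,\mathrm{diag}(\lambda,\sigma')\,\mathrm{diag}(1,v)^H$ and variants with different unitary factors are ordered SVDs of $A$. Their rank-$1$ truncation errors are $\|x\|$ and $\|\sigma\|$, where $x=(up)\sigma(vq)^*$. Comparing gives $\|y\sigma z^*\|=\|\sigma\|$ for all unitaries $y,z$ and all $\sigma\in K^+$. Combined with the existence of a polar-type decomposition $a=y\sigma z^*$ for every $a$ (the $1\times1$ case of Theorem~\ref{thm: existence of SVD}), this should give $\|uav\|=\|a\|$ for all $a\in K$ and all unitaries $u,v$.

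Step two is polarization. It yields $\langle ua,ub\rangle=\langle a,b\rangle$, hence $\langle ua,c\rangle=\langle a,u^*c\rangle$, and similarly on the right.

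Step three is linearity. Every element of $\bigoplus_\rho\M_{d_\rho}(D_\rho)$ is a linear combination of unitaries: a self-adjoint $h$ with $\|h\|\le1$ satisfies $h=\tfrac12\bigl((h+i\sqrt{1-h^2})+(h-i\sqrt{1-h^2})\bigr)$, with a real variant obtained via Cayley transforms or reflections when $D_\rho$ is real. Extending the identities from unitaries to all $a$ by linearity gives the adjoint identities.

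I expect the delicate points to be choosing the competing ordered SVDs so that they genuinely differ in the second singular value, and handling the real case of step three.
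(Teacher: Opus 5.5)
Your sufficiency direction is correct. Obtaining the weights from $\tau(a)=\langle a,1\rangle$ (trace property, positivity, and symmetry to kill $\operatorname{Im}\operatorname{tr}$ on complex summands over $\R$) is a clean, self-contained substitute for the Ambrose/Ingelstam citation the paper uses.

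The gap is Step one of necessity. For $A=\mathrm{diag}(\lambda,x)$ with $\lambda>\max_\rho\sigma_{\max}(\pi_\rho(x))$, the top $d_\rho$ singular values of $\widehat A_\rho$ are the $\lambda$'s, strictly above the rest. So $\Pi_\rho(A_1)$ is the unique truncation of $\widehat A_\rho$ to its top $d_\rho$ singular values. Hence every ordered SVD of $A$ has the same truncation $A_1=\mathrm{diag}(\lambda,0)$ and the same error matrix $\mathrm{diag}(0,x)$. For your SVD $\mathrm{diag}(1,up)\,\mathrm{diag}(\lambda,\sigma)\,\mathrm{diag}(1,vq)^H$, the error is $\mathrm{diag}(0,(up)\sigma(vq)^*)=\mathrm{diag}(0,x)$, of norm $\|x\|$. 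It equals $\|\Sigma-\Sigma^{(1)}\|=\|\sigma\|$ only if you already know $\|UMV^H\|=\|M\|$, which is the unitary invariance you are trying to prove, so the step is circular. The principle ``two ordered truncations have equal error'' gives nothing in the gapped regime you set up. It discards the real content of the hypothesis: optimality against rank-$k$ competitors $B$ that are not truncations of any SVD.

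The missing idea is a one-parameter family of rank-one competitors through the unique truncation, combined with a first-order condition. The paper takes the unitary $U=\tfrac1{\sqrt2}\begin{pmatrix}u&u\\1&-1\end{pmatrix}$ and $X=\sqrt2\,U\,\mathrm{diag}(\lambda,a)=\begin{pmatrix}\lambda u&ua\\ \lambda&-a\end{pmatrix}$. By the same gap argument, its truncation is $X_1=\begin{pmatrix}\lambda u&0\\ \lambda&0\end{pmatrix}$. It compares $X_1$ with $Y(\tau)=\begin{pmatrix}u\\1\end{pmatrix}\begin{pmatrix}\lambda&\tau a\end{pmatrix}$, which has rank at most one and equals $X_1$ at $\tau=0$. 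Since $\|X-Y(\tau)\|^2=(1-\tau)^2\|ua\|^2+(1+\tau)^2\|a\|^2$ is minimized at $\tau=0$, its linear coefficient $2(\|a\|^2-\|ua\|^2)$ vanishes, so $\|ua\|=\|a\|$. The transposed construction gives $\|av\|=\|a\|$.

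From there your Steps two and three work. In Step three, use the sign-matrix decomposition uniformly: $\pi(a)=Q\Lambda P^H$ with $\Lambda$ a real combination of sign matrices, so $a$ is a real combination of unitaries. The formula $h\pm i\sqrt{1-h^2}$ needs a central square root of $-1$, which $\M_d(\R)$ and $\M_d(\H)$ over $\R$ do not have.
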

\begin{proof}
Assume that $\langle\cdot,\cdot\rangle$ satisfies the adjoint identities. Then there exist a structure isomorphism $\pi$ of $K$ and constants $c_\rho > 0$ such that
\begin{equation}\label{eq: relation of inner products}
    \langle a,b \rangle = \sum_{\rho=1}^\ell c_\rho \langle \pi_\rho(a), \pi_\rho(b) \rangle \qquad \text{for all } a,b\in K,
\end{equation}
where $\langle X,Y\rangle = \mathrm{tr}(XY^H)$ on $\mathrm{M}_{d_\rho}(D_\rho)$ for $F = \C$ and $\langle X,Y\rangle = \mathrm{Re}\mathrm{tr}(XY^H)$ on $\mathrm{M}_{d_\rho}(D_\rho)$ for $F = R$. For a proof, see~\cite{ambrose1945} for $F=\C$ and~\cite{ingelstam1966} for $F=\R$ (see also~\cite{balachandran1986, cabrera1989}). Applying Eq.~\eqref{eq: relation of inner products} entrywise gives, for $A,B\in K^{m\times n}$,
\begin{equation}
    \langle A, B \rangle
    = \sum_{i=1}^m \sum_{j=1}^n \langle A_{ij}, B_{ij} \rangle
    = \sum_{\rho=1}^\ell c_\rho \langle \widehat{A}_\rho , \widehat{B}_\rho \rangle,
    \qquad\text{so}\qquad
    \|A\|^2 = \sum_{\rho=1}^\ell c_\rho \|\widehat A_\rho\|^2 .
\end{equation}
Let $A=U\Sigma V^H$ be an ordered SVD, and let $B\in K^{m\times n}$ with $\mathrm{rank}_K(B)\le k$. Conjugating the singular values by unitary elements does not change $A_k$, so by Proposition~\ref{prop: characterization of ordered SVDs} we may assume that the SVD is induced by $\pi$. Then $\Pi_\rho(A_k)$ is a truncation of $\widehat A_\rho$ to its $kd_\rho$ leading singular values, while $\mathrm{rank}(\widehat B_\rho)\le kd_\rho$ by Proposition~\ref{prop:rank}. The Eckart--Young theorem over $D_\rho$\footnote{See~\cite{jia2019, liu2021} for the Eckart--Young result over $\H$.} gives $\|\widehat A_\rho-\Pi_\rho(A_k)\|\le\|\widehat A_\rho-\widehat B_\rho\|$ for every $\rho$, and hence
\begin{equation}
    \|A-A_k\|^2 = \sum_{\rho=1}^\ell c_\rho\|\widehat A_\rho-\Pi_\rho(A_k)\|^2 \le \sum_{\rho=1}^\ell c_\rho\|\widehat A_\rho-\widehat B_\rho\|^2 = \|A-B\|^2 .
\end{equation}

Conversely, assume that truncations of ordered SVDs are optimal, and let $\pi$ be a structure isomorphism of $K$. We prove the adjoint identities in three steps: we show that the inner product is invariant under multiplication by unitary elements on either side, write self-adjoint elements as real combinations of self-adjoint unitary elements, and combine these two results.

\smallskip\noindent\underline{Step (1)}: Fix $a\in K$, a unitary $u\in K$, and a real number $\lambda > \max_\rho \sigma_{\max}(\pi_\rho(a))$. Let
\begin{equation}
    U = \frac{1}{\sqrt{2}}\begin{pmatrix} u & u \\ 1 & -1 \end{pmatrix}, \qquad
    X = \sqrt{2}\, U \begin{pmatrix} \lambda & 0 \\ 0 & a \end{pmatrix}
      = \begin{pmatrix} \lambda u & ua \\ \lambda & -a \end{pmatrix}.
\end{equation}
Then $U$ is unitary, since
\begin{equation}
    U^H U = \frac{1}{2} \begin{pmatrix} u^\ast u + 1 & u^\ast u - 1 \\ u^\ast u - 1 & u^\ast u + 1 \end{pmatrix} = \begin{pmatrix} 1 & 0 \\ 0 & 1 \end{pmatrix}.
\end{equation}
For each $\rho$, $\widehat X_\rho = \sqrt2\,\widehat U_\rho\,\mathrm{diag}(\lambda I_{d_\rho}, \pi_\rho(a))$ with $\widehat U_\rho$ unitary, so the singular values of $\widehat X_\rho$ are $\sqrt2\lambda$, repeated $d_\rho$ times, and $\sqrt2\,\sigma_i(\pi_\rho(a))$, and the former are strictly larger. As in the first part of the proof, for any ordered SVD of $X$, $\Pi_\rho(X_1)$ is a truncation of $\widehat X_\rho$ to its $d_\rho$ leading singular values, which is unique because of this gap. Hence
\begin{equation}
    X_1 = \sqrt{2}\,U\begin{pmatrix} \lambda & 0\\ 0 & 0 \end{pmatrix} = \begin{pmatrix} \lambda u & 0\\ \lambda & 0 \end{pmatrix}.
\end{equation}
For $\tau\in\R$, the outer product
\begin{equation}
    Y(\tau) = \begin{pmatrix} u \\ 1 \end{pmatrix} \begin{pmatrix} \lambda & \tau a \end{pmatrix}
    = \begin{pmatrix} \lambda u & \tau ua \\ \lambda & \tau a \end{pmatrix}
\end{equation}
has rank at most one, and $Y(0)=X_1$. Since
$X - Y(\tau) = \begin{pmatrix} 0 & (1-\tau)ua \\ 0 & -(1+\tau)a \end{pmatrix}$, we have that
\begin{equation}
    f(\tau) = \|X - Y(\tau)\|^2 = (1+\tau^2)\big(\|ua\|^2 + \|a\|^2\big) + 2\tau\big(\|a\|^2 - \|ua\|^2\big).
\end{equation}
By the optimality of $X_1$, $f$ is minimized at $\tau=0$, so its linear coefficient vanishes and $\|ua\|=\|a\|$. The same argument with a unitary $v\in K$ and
\begin{equation}
    W = \frac{1}{\sqrt{2}}\begin{pmatrix} v & 1\\ v & -1 \end{pmatrix}, \quad
    X = \sqrt{2}\begin{pmatrix} \lambda & 0\\ 0 & a \end{pmatrix} W = \begin{pmatrix} \lambda v & \lambda\\ av & -a \end{pmatrix}, \quad
    X_1 = \begin{pmatrix} \lambda v & \lambda\\ 0 & 0 \end{pmatrix}, \quad
    Y(\tau) = \begin{pmatrix} \lambda \\ \tau a \end{pmatrix} \begin{pmatrix} v & 1 \end{pmatrix}
\end{equation}
gives $\|av\|=\|a\|$. Hence $\|uav\|=\|a\|$ for all unitary $u,v\in K$. Since $a\mapsto uav$ is $F$-linear, the polarization identity gives
\begin{equation}\label{eq: unitary invariance}
    \langle uav, ubv \rangle = \langle a, b \rangle \qquad\text{for all } a,b\in K \text{ and unitary } u,v\in K.
\end{equation}

\smallskip\noindent\underline{Step (2)}: Let $s\in K$ be self-adjoint. Each $\pi_\rho(s)$ is self-adjoint, so by the spectral theorem~\cite{zhang1997} $\pi(s)=Q\Lambda Q^H$ with $Q=\bigoplus_\rho Q_\rho$ unitary and $\Lambda=\bigoplus_\rho\Lambda_\rho$ real diagonal. The sign matrices $\mathrm{diag}(\pm1,\dots,\pm1)$ of size $\sum_\rho d_\rho$ span the real diagonal matrices, so $\Lambda=\sum_j\alpha_j\Delta_j$ with $\alpha_j\in\R$ and sign matrices $\Delta_j$. Each $Q\Delta_jQ^H$ is block diagonal, self-adjoint and unitary, so $w_j\colonequals\pi^{-1}(Q\Delta_jQ^H)$ is a self-adjoint unitary element of $K$, and
\begin{equation}\label{eq: s as linear combination of symmetries}
    s=\sum_j\alpha_j w_j .
\end{equation}

\smallskip\noindent\underline{Step (3)}: Taking $v=1$ or $u=1$ in Eq.~\eqref{eq: unitary invariance} gives, for unitary $u,v\in K$,
\begin{equation}
    \langle ua, b\rangle = \langle ua, uu^\ast b\rangle = \langle a, u^\ast b\rangle
    \quad\text{and}\quad
    \langle av, b\rangle = \langle a, bv^\ast\rangle .
\end{equation}
By Eq.~\eqref{eq: s as linear combination of symmetries}, these extend to self-adjoint $s\in K$:
\begin{equation}
    \langle sa, b\rangle = \sum_j\alpha_j\langle w_ja, b\rangle = \sum_j\alpha_j\langle a, w_jb\rangle = \langle a, sb\rangle,
\end{equation}
and likewise $\langle as, b\rangle = \langle a, bs\rangle$. Now let $a\in K$, and write $a=usv^\ast$ with $u,v\in K$ unitary and $s\in K^+$, an SVD of the $1\times1$ matrix $(a)$. Then
\begin{align}
\begin{split}
     \langle ab, c\rangle &= \langle usv^\ast b, c\rangle = \langle sv^\ast b, u^\ast c\rangle = \langle v^\ast b, su^\ast c\rangle = \langle b, vsu^\ast c\rangle = \langle b, a^\ast c\rangle, \text{ and}\\
    \langle ba, c\rangle &= \langle busv^\ast, c\rangle = \langle bus, cv\rangle = \langle bu, cvs\rangle = \langle b, cvsu^\ast\rangle = \langle b, ca^\ast\rangle.
\end{split}
\end{align}
\end{proof}

Kilmer and Martin~\cite{kilmer2011} showed that truncated t-SVDs are
optimal, Kilmer et al.~\cite{kilmer2021} and Avron and
Mor~\cite{avron2025} extended this to the $\star_{\mathbf{M}}$-product when
$\mathbf{M}$ is a nonzero multiple of a unitary matrix, and
Mor~\cite{mor2025} characterized the transforms for which this holds.
Theorem~\ref{thm: optimality of rank-k SVD approximation} recovers this
characterization.

\begin{corollary}[Mor~\cite{mor2025}]\label{cor:mor}
Let $K$ be the  algebra of real tubes under the $\star_{\mathbf{M}}$-product
(Example~\ref{ex: tubal algebras}) and the Euclidean inner product. Then rank-$k$
truncations of $\star_{\mathbf{M}}$-SVDs are optimal in the
Frobenius norm if and only if $\mathbf{M}=\mathbf{D}\mathbf{Q}$, with
$\mathbf{D}$ a nonsingular real diagonal matrix and $\mathbf{Q}$ unitary.
\end{corollary}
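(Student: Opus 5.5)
By Theorem~\ref{thm: optimality of rank-k SVD approximation}, truncations of ordered SVDs are optimal exactly when the Euclidean inner product on the real tubes satisfies the adjoint identities. So the plan is to reduce the corollary to a purely algebraic question: for which invertible $\mathbf{M}$ (satisfying the realness condition of Example~\ref{ex: tubal algebras}) does the Euclidean inner product satisfy the adjoint identities? One caveat needs checking first. The corollary speaks of ``$\star_{\mathbf{M}}$-SVDs'', while the theorem speaks of ordered SVDs. Since every $d_\rho=1$, the $\star_{\mathbf{M}}$-SVD is the SVD induced by $\pi$ (the transform $\mathbf{M}$). By Proposition~\ref{prop: characterization of ordered SVDs}, ordered SVDs are these induced SVDs up to unitary conjugation of the singular values, which leaves truncations unchanged. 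The two optimality statements therefore coincide.

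Because the algebra is commutative, the two adjoint identities coincide, and it suffices to check $\langle a\star_{\mathbf{M}} b, c\rangle = \langle b, a^\ast\star_{\mathbf{M}} c\rangle$. Write $\hat a=\mathbf{M}a$. Then $a\star_{\mathbf{M}} b=\mathbf{M}^{-1}\mathrm{diag}(\hat a)\mathbf{M}b$ and $a^\ast=\mathbf{M}^{-1}\overline{\hat a}$, so multiplication by $a$ is the operator $L_a=\mathbf{M}^{-1}\mathrm{diag}(\hat a)\mathbf{M}$ and multiplication by $a^\ast$ is $\mathbf{M}^{-1}\mathrm{diag}(\overline{\hat a})\mathbf{M}$. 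The identity says $L_a^H = L_{a^\ast}$ for all $a$, that is,
\[
\mathbf{M}^H\mathrm{diag}(\overline{\hat a})\mathbf{M}^{-H}=\mathbf{M}^{-1}\mathrm{diag}(\overline{\hat a})\mathbf{M}.
\]
Setting $G=\mathbf{M}\mathbf{M}^H$, this is equivalent to $G$ commuting with $\mathrm{diag}(\overline{\hat a})$ for all attainable $\hat a$. Here one should work over $\C$ with the Hermitian extension of the Euclidean form, which is legitimate because the real inner product is the restriction of it and the identities are real-bilinear.

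The main obstacle is determining which diagonals $\hat a$ are attainable. Over $\C^p$ every diagonal is attainable, so $G$ must be diagonal, and being positive definite it equals $\mathbf{D}^2$ for a positive diagonal $\mathbf{D}$. For real tubes, however, $\hat a$ ranges only over vectors whose entries at conjugate-paired rows are conjugate. Thus $G$ is only forced to commute with diagonals that are constant on pairs of conjugate rows with conjugated values, and this permits nonzero off-diagonal entries of $G$ linking a conjugate pair of rows. I would handle this by block analysis. Order the rows into real rows and conjugate pairs $(j,j')$. Then $G$ is block diagonal, and each $2\times2$ block for a pair must commute with $\mathrm{diag}(z,\bar z)$ for all complex $z$, which forces it to be diagonal. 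So $G$ is diagonal, with positive real entries.

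Given $\mathbf{M}\mathbf{M}^H=\mathbf{D}^2$, set $\mathbf{Q}=\mathbf{D}^{-1}\mathbf{M}$; then $\mathbf{Q}\mathbf{Q}^H=I$, so $\mathbf{Q}$ is unitary and $\mathbf{M}=\mathbf{D}\mathbf{Q}$ with $\mathbf{D}$ nonsingular real diagonal. Conversely, if $\mathbf{M}=\mathbf{D}\mathbf{Q}$, then $G=\mathbf{D}\mathbf{D}^H$ is diagonal, so it commutes with every diagonal and the adjoint identity holds. A sign or scaling in $\mathbf{D}$ is harmless, since a real diagonal $\mathbf{D}$ gives $\mathbf{D}\mathbf{D}^H=\mathbf{D}^2$. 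Combining both directions with Theorem~\ref{thm: optimality of rank-k SVD approximation} gives the corollary.
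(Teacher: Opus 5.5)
Your proposal is correct and follows essentially the same route as the paper. You reduce via Theorem~\ref{thm: optimality of rank-k SVD approximation} to the adjoint identities, rewrite them in the transform domain as the condition that $\mathbf{M}\mathbf{M}^H$ commutes with every attainable $\mathrm{diag}(\widehat{\boldsymbol{a}})$ (the paper uses its inverse, the Gram matrix of the Euclidean form in transform coordinates), deduce that it is diagonal, and factor $\mathbf{M}=\mathbf{D}\mathbf{Q}$. One correction: the real-tube ``obstacle'' is illusory, since conjugate-paired rows carry the distinct values $z\ne\bar z$ for nonreal $z$; that is why the paper can simply pick one $\widehat{\boldsymbol{a}}$ with distinct entries, and your own $2\times2$ block analysis shows that no off-diagonal entries linking a pair are permitted.
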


\begin{proof}
By Theorem~\ref{thm: optimality of rank-k SVD approximation}, it suffices
to show that the adjoint identities hold if and only if
$\mathbf{M}\mathbf{M}^H$ is diagonal. Let
$\mathbf{G}=(\mathbf{M}\mathbf{M}^H)^{-1}$. For real tubes,
$\langle\boldsymbol{a},\boldsymbol{b}\rangle
=\widehat{\boldsymbol{b}}^H\mathbf{G}\,\widehat{\boldsymbol{a}}$. Since
$\widehat{\boldsymbol{a}\star_{\mathbf{M}}\boldsymbol{b}}
=\widehat{\boldsymbol{a}}\bullet\widehat{\boldsymbol{b}}$ and
$\widehat{\boldsymbol{a}^*}=\overline{\widehat{\boldsymbol{a}}}$, we have
\[
  \langle\boldsymbol{a}\star_{\mathbf{M}}\boldsymbol{b},\boldsymbol{c}\rangle
  =\widehat{\boldsymbol{c}}^H\mathbf{G}\,
   \big(\widehat{\boldsymbol{a}}\bullet\widehat{\boldsymbol{b}}\big)
  \qquad\text{and}\qquad
  \langle\boldsymbol{b},\boldsymbol{a}^*\star_{\mathbf{M}}\boldsymbol{c}\rangle
  =\widehat{\boldsymbol{c}}^H
   \big(\widehat{\boldsymbol{a}}\bullet\mathbf{G}\,\widehat{\boldsymbol{b}}\big).
\]
The vectors $\widehat{\boldsymbol{b}}$ and $\widehat{\boldsymbol{c}}$ span
$\mathbb{C}^n$, so the first identity holds if and only if
$\mathbf{G}\,(\widehat{\boldsymbol{a}}\bullet\boldsymbol{x})
=\widehat{\boldsymbol{a}}\bullet\mathbf{G}\boldsymbol{x}$ for every
$\boldsymbol{a}$ and every $\boldsymbol{x}\in\mathbb{C}^n$, that is, if and
only if $G_{jl}(\widehat{a}_l-\widehat{a}_j)=0$ for all $j,l$. Since some
$\widehat{\boldsymbol{a}}$ has distinct entries, this holds if and only if
$\mathbf{G}$, and hence $\mathbf{M}\mathbf{M}^H$, is diagonal. The second
identity is the same as the first, since $K$ is commutative. Finally,
$\mathbf{M}\mathbf{M}^H$ is diagonal if and only if
$\mathbf{M}\mathbf{M}^H=\mathbf{D}^2$ with $\mathbf{D}$ positive diagonal,
that is, if and only if $\mathbf{Q}=\mathbf{D}^{-1}\mathbf{M}$ is unitary.
\end{proof}

For the $\star_G$-product, Hoyos et al.~\cite{hoyos2026} proved that
truncations of the $\star_G$-SVD are optimal using the Plancherel formula.
In our framework, this follows from the adjoint identities.

\begin{corollary}[Hoyos et al.~\cite{hoyos2026}]\label{cor:starG}
Let $K=F[G]$ be the group algebra of a finite group $G$, with the product,
involution and inner product of Example~\ref{ex: group algebras}. Then
rank-$k$ truncations of $\star_G$-SVDs are optimal in the Frobenius norm.
\end{corollary}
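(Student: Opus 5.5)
By Theorem~\ref{thm: optimality of rank-k SVD approximation}, it suffices to check that the inner product $\langle a,b\rangle=\sum_{g\in G}a(g)\overline{b(g)}$ on $F[G]$ satisfies the adjoint identities. We also need $F[G]$ to be finite-dimensional, very proper and unital. These hold by Example~\ref{ex: group algebras} and Corollary~\ref{cor:existence}, since the value of $\sum_i a_i^*a_i$ at the identity is $\sum_i\|a_i\|^2$. We further need that the $\star_G$-SVDs of~\cite{hoyos2026} are ordered SVDs. The $\star_G$-SVD is computed by taking the Fourier transform on $G$, which is a structure isomorphism, and then computing classical SVDs with sorted singular values in each component. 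So it is an SVD induced by $\pi$, and by Proposition~\ref{prop: characterization of ordered SVDs} it is ordered.

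The main step is a direct computation of the adjoint identities. For the first one we have
\[
\langle ab,c\rangle=\sum_{g}\sum_{h}a(h)b(h^{-1}g)\overline{c(g)}.
\]
Substituting $k=h^{-1}g$, so that $g=hk$, gives $\sum_{h,k}a(h)b(k)\overline{c(hk)}$. On the other side,
\[
\langle b,a^*c\rangle=\sum_k b(k)\,\overline{(a^*c)(k)}, \qquad (a^*c)(k)=\sum_h a^*(h)c(h^{-1}k)=\sum_h\overline{a(h^{-1})}\,c(h^{-1}k).
\]
Conjugating and reindexing $h\mapsto h^{-1}$ gives $\overline{(a^*c)(k)}=\sum_h a(h)\overline{c(hk)}$, so the two sides agree.

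The second identity is handled the same way. We write
\[
(ba)(g)=\sum_h b(h)a(h^{-1}g)=\sum_{k}b(gk^{-1})a(k),
\]
and then
\[
\langle ba,c\rangle=\sum_{h,k}b(h)a(k)\overline{c(hk)}.
\]
We also have $(ca^*)(h)=\sum_k c(hk)a^*(k^{-1})=\sum_k c(hk)\overline{a(k)}$. Conjugating gives $\langle b,ca^*\rangle=\sum_{h,k}b(h)a(k)\overline{c(hk)}$, which matches.

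The only point needing care is the bookkeeping of the reindexings over the group; everything else is supplied by the earlier results. Alternatively, one could invoke Eq.~\eqref{eq: relation of inner products} via the Plancherel formula, with $c_\rho=d_\rho/|G|$. The direct verification avoids needing that, so I would present it as the proof.
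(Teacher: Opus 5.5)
Your proposal is correct and takes essentially the same route as the paper: both reduce to Theorem~\ref{thm: optimality of rank-k SVD approximation} and verify the two adjoint identities by reindexing the sums over $G$. The paper substitutes $k=hg$ on the $a^*c$ side, while you substitute $k=h^{-1}g$ on the $ab$ side, which is equivalent. Your extra checks, that $F[G]$ is very proper and that $\star_G$-SVDs are induced by the Fourier transform and hence ordered by Proposition~\ref{prop: characterization of ordered SVDs}, are left implicit in the paper, and they usefully complete the argument.
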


\begin{proof}
By Theorem~\ref{thm: optimality of rank-k SVD approximation}, it suffices
to check the adjoint identities. For $a,c\in K$ and $g\in G$,
\[
  (a^*c)(g)=\sum_{h\in G}\overline{a(h^{-1})}\,c(h^{-1}g)
  =\sum_{h\in G}\overline{a(h)}\,c(hg).
\]
Hence, for $a,b,c\in K$, substituting $k=hg$,
\[
  \langle ab,c\rangle
  =\sum_{h,k\in G}a(h)\,b(h^{-1}k)\,\overline{c(k)}
  =\sum_{g,h\in G}a(h)\,b(g)\,\overline{c(hg)}
  =\sum_{g\in G}b(g)\,\overline{(a^*c)(g)}
  =\langle b,a^*c\rangle.
\]
The identity $\langle ba,c\rangle=\langle b,ca^*\rangle$ follows in the same
way, substituting $k=gh$.
\end{proof}

Miao and Kou~\cite[Thm.~2]{miaokou2021tsvd} showed that truncated TQt-SVDs are
optimal when the transform is a nonzero real multiple of a unitary
quaternion matrix. As for the $\star_{\mathbf{M}}$-product, the class of
transforms for which this holds is larger.

\begin{corollary}[Miao and Kou~\cite{miaokou2021tsvd}]\label{cor:qt}
Let $K=\mathbb{H}^n$ be the algebra of quaternion tubes under the
$\star_{QT}$-product with an invertible quaternion matrix $\mathbf{T}$, and
the inner product
$\langle\boldsymbol{a},\boldsymbol{b}\rangle
=\operatorname{Re}(\boldsymbol{b}^H\boldsymbol{a})$. Then rank-$k$
truncations of TQt-SVDs of third-order quaternion tensors are optimal in
the Frobenius norm if and only if $\mathbf{T}=\mathbf{D}\mathbf{Q}$, with
$\mathbf{D}$ a nonsingular real diagonal matrix and $\mathbf{Q}$ a unitary
quaternion matrix.
\end{corollary}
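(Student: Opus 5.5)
The plan is to follow the proof of Corollary~\ref{cor:mor}: by Theorem~\ref{thm: optimality of rank-k SVD approximation}, it suffices to show that the inner product $\langle\boldsymbol{a},\boldsymbol{b}\rangle=\operatorname{Re}(\boldsymbol{b}^H\boldsymbol{a})$ on $K=\H^n$ satisfies the adjoint identities if and only if $\mathbf{T}=\mathbf{D}\mathbf{Q}$. First we record the structure of $K$. Under the $\star_{QT}$-product, $\widehat{\boldsymbol{a}}=\mathbf{T}\boldsymbol{a}$ turns the product into the entrywise quaternion product $\widehat{\boldsymbol{a}}\bullet\widehat{\boldsymbol{b}}$. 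Existence of the product requires the involution to correspond to entrywise conjugation, $\widehat{\boldsymbol{a}^*}=\overline{\widehat{\boldsymbol{a}}}$. Thus $\boldsymbol{a}\mapsto\widehat{\boldsymbol{a}}$ is a $\ast$-isomorphism $K\to\H^n$, i.e.\ a structure isomorphism with $\ell=n$, $d_\rho=1$ and $D_\rho=\H$. Writing $\boldsymbol{a}=\mathbf{T}^{-1}\widehat{\boldsymbol{a}}$, the inner product becomes
\[
\langle\boldsymbol{a},\boldsymbol{b}\rangle=\operatorname{Re}\big(\widehat{\boldsymbol{b}}^H\mathbf{G}\,\widehat{\boldsymbol{a}}\big),\qquad \mathbf{G}=(\mathbf{T}\mathbf{T}^H)^{-1},
\]
where $\mathbf{G}$ is a positive definite quaternion Hermitian matrix.

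For sufficiency, if $\mathbf{T}=\mathbf{D}\mathbf{Q}$ then $\mathbf{G}=\mathbf{D}^{-2}$ is real diagonal. Hence $\langle\boldsymbol{a},\boldsymbol{b}\rangle=\sum_\rho c_\rho\operatorname{Re}(\overline{\widehat b_\rho}\,\widehat a_\rho)$ with $c_\rho>0$, which is exactly the form of Eq.~\eqref{eq: relation of inner products}. The adjoint identities then follow componentwise from $\operatorname{Re}(\overline{z}\,xy)=\operatorname{Re}(\overline{\bar x z}\,y)$ and $\operatorname{Re}(\overline z\,yx)=\operatorname{Re}(\overline{z\bar x}\,y)$ in $\H$, using that $\operatorname{Re}(pq)=\operatorname{Re}(qp)$.

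For necessity, the first adjoint identity reads
\[
\operatorname{Re}\big(\widehat{\boldsymbol c}^H\mathbf{G}(\widehat{\boldsymbol a}\bullet\widehat{\boldsymbol b})\big)=\operatorname{Re}\big((\overline{\widehat{\boldsymbol a}}\bullet\widehat{\boldsymbol c})^H\mathbf{G}\widehat{\boldsymbol b}\big).
\]
We specialize $\widehat{\boldsymbol a},\widehat{\boldsymbol b},\widehat{\boldsymbol c}$ to vectors supported on single coordinates $j,l$, with quaternion values $x$ at $j$ for $\boldsymbol a$, $y$ at $l$ for $\boldsymbol b$, and $z$ at $j$ for $\boldsymbol c$. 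Taking $j\ne l$, the left side is $0$, since $\widehat{\boldsymbol a}\bullet\widehat{\boldsymbol b}=0$. The right side is $\operatorname{Re}(\overline{z}\,x\,G_{jl}\,y)$. Since $\overline z x$ ranges over all of $\H$ and $\operatorname{Re}(p\,G_{jl}\,y)=0$ for all $p,y\in\H$ forces $G_{jl}=0$, we conclude that $\mathbf{G}$ is diagonal. Its diagonal entries are real and positive because $\mathbf{G}$ is Hermitian positive definite. Therefore $\mathbf{T}\mathbf{T}^H=\mathbf{D}^2$ with $\mathbf{D}$ real positive diagonal, and $\mathbf{Q}=\mathbf{D}^{-1}\mathbf{T}$ satisfies $\mathbf{Q}\mathbf{Q}^H=I$. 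For square quaternion matrices this also gives $\mathbf{Q}^H\mathbf{Q}=I$, so $\mathbf{Q}$ is unitary.

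The main obstacle is the first step: verifying that the $\star_{QT}$-product and its involution really correspond to entrywise quaternion operations after applying $\mathbf{T}$. Because $\H$ is noncommutative, the convention in~\cite{miaokou2021tsvd} (whether $\mathbf{T}$ acts on the left, and how the involution is defined) must be checked against the definition. One must also keep the real-part trace identities in the correct order. Once this is settled, the rest mirrors Corollary~\ref{cor:mor}.
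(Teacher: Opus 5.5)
Your proposal is correct and follows essentially the same route as the paper: you reduce to the adjoint identities via Theorem~\ref{thm: optimality of rank-k SVD approximation}, write the inner product as $\operatorname{Re}(\widehat{\boldsymbol b}^H\mathbf{G}\,\widehat{\boldsymbol a})$ with $\mathbf{G}=(\mathbf{T}\mathbf{T}^H)^{-1}$, and use single-coordinate test vectors to force $\mathbf{G}$ to be diagonal. The differences are minor: you obtain reality and positivity of $G_{jj}$ from Hermitian positive definiteness rather than from $G_{jj}$ commuting with every quaternion, and the $\ast$-isomorphism $\boldsymbol a\mapsto\mathbf{T}\boldsymbol a$ onto $\H^n$ with entrywise operations, which you flag as the main obstacle, is also assumed without proof in the paper (Corollary~\ref{cor:existence}).
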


\begin{proof}
As in the proof of Corollary~\ref{cor:mor}, let
$\mathbf{G}=(\mathbf{T}\mathbf{T}^H)^{-1}$, so that
$\langle\boldsymbol{a},\boldsymbol{b}\rangle
=\operatorname{Re}(\widehat{\boldsymbol{b}}^H\mathbf{G}\,
\widehat{\boldsymbol{a}})$. Then
\[
  \langle\boldsymbol{a}\star_{QT}\boldsymbol{b},\boldsymbol{c}\rangle
  -\langle\boldsymbol{b},\boldsymbol{a}^*\star_{QT}\boldsymbol{c}\rangle
  =\operatorname{Re}\sum_{j,l=1}^{n}\overline{\widehat{c}_j}\,
   \big(G_{jl}\,\widehat{a}_l-\widehat{a}_j\,G_{jl}\big)\,\widehat{b}_l .
\]
Since $\widehat{\boldsymbol{b}}$ and $\widehat{\boldsymbol{c}}$ range over
$\mathbb{H}^n$, the first identity holds if and only if
$G_{jl}\,\widehat{a}_l=\widehat{a}_j\,G_{jl}$ for all $j,l$ and every
$\widehat{\boldsymbol{a}}\in\mathbb{H}^n$. Taking $\widehat{a}_l=0$ and
$\widehat{a}_j=1$ gives $G_{jl}=0$ for $j\ne l$, and $G_{jj}$ must commute
with every quaternion, so it is real. Hence the first identity holds if
and only if $\mathbf{G}$, and therefore $\mathbf{T}\mathbf{T}^H$, is a real
diagonal matrix. In that case the second identity also holds, by the same
computation and $\operatorname{Re}(xy)=\operatorname{Re}(yx)$. Finally,
$\mathbf{T}\mathbf{T}^H$ is real diagonal if and only if
$\mathbf{T}\mathbf{T}^H=\mathbf{D}^2$ with $\mathbf{D}$ positive diagonal,
that is, if and only if $\mathbf{Q}=\mathbf{D}^{-1}\mathbf{T}$ is unitary.
\end{proof}

Yu et al.~\cite{yu2024} introduced an SVD for tensors of reduced biquaternions and used its truncations for color video compression, without an optimality guarantee. 
Theorem~\ref{thm: optimality of rank-k SVD approximation} provides one.

\begin{corollary}\label{cor:rb}
For third-order reduced biquaternion tensors with the $\ast_{\mathrm{Ht}}$-product of Yu et al.~\cite{yu2024}, rank-$k$ truncations of Ht-SVDs are optimal in the Frobenius norm.
\end{corollary}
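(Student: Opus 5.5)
By Theorem~\ref{thm: optimality of rank-k SVD approximation}, it suffices to check two things: the algebra of reduced biquaternion tubes with the $\ast_{\mathrm{Ht}}$-product is a finite-dimensional, very proper, unital $\ast$-algebra, and its inner product satisfies the adjoint identities. The first point was already established in the proof of Corollary~\ref{cor:existence}. There, the splitting $q\mapsto(q_{(c),1},q_{(c),2})$ of each reduced biquaternion into two complex numbers, followed by the transform along the tube mode, gives a $\ast$-isomorphism onto a direct sum of copies of $\C$, which is very proper. It also identifies Ht-SVDs with SVDs induced by this structure isomorphism; by Proposition~\ref{prop: characterization of ordered SVDs}, these are ordered once singular values are sorted within each component, which is how Yu et al.\ compute them.

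For the adjoint identities, I would follow the pattern of Corollary~\ref{cor:mor}. The algebra is commutative, since reduced biquaternions commute and the tube product is transform-diagonalized. Hence the two identities coincide, and only $\langle ab,c\rangle=\langle b,a^*c\rangle$ needs checking. Write $\Phi$ for the composite real-linear map (splitting, then transform), so that $\Phi(ab)=\Phi(a)\bullet\Phi(b)$ and $\Phi(a^*)=\overline{\Phi(a)}$. The identity then holds if and only if the Frobenius inner product pulled back through $\Phi^{-1}$ has a diagonal Gram matrix in the transformed coordinates. Equivalently, there are positive weights $c_\rho$ with
\[
\langle a,b\rangle=\sum_\rho c_\rho\,\mathrm{Re}\bigl(\Phi(a)_\rho\overline{\Phi(b)_\rho}\bigr).
\]
This is exactly Eq.~\eqref{eq: relation of inner products}.

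The main step, and the likely obstacle, is verifying this weighted Plancherel formula for the specific transform in~\cite{yu2024}. First, the splitting $q=q_{(c),1}e_1+q_{(c),2}e_2$ uses the idempotents $e_{1,2}=(1\pm j)/2$. For $q=(q_0+q_1i)+(q_2+q_3i)j$, one gets $|q_{(c),1}|^2+|q_{(c),2}|^2=2\|q\|^2$, a scalar multiple of the Euclidean norm. Second, the tube transform used by Yu et al.\ (the DFT, or a unitary multiple of it) multiplies norms by a constant. Combining the two gives the weights $c_\rho$ constant, and polarization yields the formula above. If the paper allows a general invertible transform $\mathbf{M}$, the argument of Corollary~\ref{cor:mor} instead shows that optimality holds precisely when $\mathbf{M}=\mathbf{D}\mathbf{Q}$; I would state the result for the transforms they actually use. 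The remaining care point is the involution: one must confirm that Yu et al.'s conjugation corresponds under $\Phi$ to entrywise complex conjugation in each component, since that is what makes $\Phi$ a $\ast$-isomorphism.
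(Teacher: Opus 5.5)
Your proposal is correct and follows essentially the same route as the paper. The splitting followed by the DFT serves as the structure isomorphism, and the identity $|q_{(c),1}|^2+|q_{(c),2}|^2=2\|q\|^2$ together with Parseval gives a Plancherel formula with constant weights $1/(2n)$; the first adjoint identity follows from it directly, and the second by commutativity. You also note that Ht-SVDs are SVDs induced by this isomorphism, hence ordered, which makes explicit a step the paper leaves implicit when it invokes Theorem~\ref{thm: optimality of rank-k SVD approximation}.
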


\begin{proof}
As in the proof of Corollary~\ref{cor:existence}, the splitting $q\mapsto(q_{(c),1},q_{(c),2})$ followed by the discrete Fourier transform
along the tubes is a $*$-isomorphism $\pi$ onto $\mathbb{C}^{2n}$. 
Since $\|q\|^2=\tfrac12\big(|q_{(c),1}|^2+|q_{(c),2}|^2\big)$, Parseval's identity gives
$\langle a,b\rangle=\frac{1}{2n}\sum_\rho \operatorname{Re}\big(\pi_\rho(a)\overline{\pi_\rho(b)}\big)$. 
Hence
\[
  \langle ab,c\rangle
  =\frac{1}{2n}\sum_\rho\operatorname{Re}
   \big(\pi_\rho(b)\,\overline{\overline{\pi_\rho(a)}\,\pi_\rho(c)}\big)
  =\langle b,a^*c\rangle,
\]
and the second identity is the same, since the algebra is commutative. The result follows from Theorem~\ref{thm: optimality of rank-k SVD approximation}.
\end{proof}

\section{SVD-Based Tensor Decompositions}\label{sec:tensor}
In this section, we extend the Tucker and tensor train (TT) decompositions to tensors with entries in $K$. Using the SVD of matrices over $K$, we generalize the HOSVD and ST-HOSVD algorithms for the Tucker decomposition and the TT-SVD algorithm for the TT decomposition. We show that the TT-SVD is quasi-optimal whenever the inner product satisfies the adjoint identities, while the HOSVD and ST-HOSVD are quasi-optimal for commutative $K$. We give an example where the HOSVD guarantee fails for noncommutative $K$. 

All algorithms in this section call $\mathrm{LLSV}_K$ (Algorithm~\ref{alg:LLSV}), either with a target number of columns or with a relative error tolerance. The following remark explains how a tolerance determines the number of columns in the output of $\mathrm{LLSV}_K$. 
\begin{remark}[Choosing the sizes from a tolerance]\label{rem: LLSV tolerance}
   Let $A\in K^{m\times n}$, and assume that the inner product on $K$ satisfies the adjoint identities, with weights $c_\rho$ as in Eq.~\eqref{eq: relation of inner products}. Let $\sigma_{\rho,1}\ge\sigma_{\rho,2}\ge\cdots$ be the singular values of $\widehat A_\rho$. Each column over $K$ accounts for $d_\rho$ of them in component $\rho$, so we assign to its $s$-th column the energy
\[
  e_{\rho,s} = c_\rho\sum_{t=(s-1)d_\rho+1}^{sd_\rho}\sigma_{\rho,t}^2 .
\]
Given $\delta\ge0$, $\mathrm{LLSV}_K(A,\delta)$ discards the columns of smallest energy, across all components, while their total energy is at most $\delta^2$. If $k_\rho$ columns remain in component $\rho$, it runs Algorithm~\ref{alg:LLSV} with $\mathrm{llsv}(\widehat A_\rho,k_\rho d_\rho)$ in component $\rho$ and returns $U\in K^{m\times k}$ with $k=\max_\rho k_\rho$, padding component $\rho$ with zero columns. Then $\|A-UU^HA\|^2$ equals the discarded energy, which is at most $\delta^2$.
\end{remark}

We now turn to tensors. The \textbf{mode-$k$ unfolding} of a tensor $\calA \in \A^{n_1\times\cdots\times n_d}$ is the matrix $\calA_{(k)} \in \A^{n_k\times N_k}$, where $N_k = \prod_{q\neq k} n_q$, defined by
   \[
     \calA_{(k)}(i_k,\beta_k) = \calA(i_1,\ldots,i_d),
     \quad
     \beta_k = 1 + \sum_{q\neq k}
       \Bigl(\prod_{p=1,\, p\neq k}^{q-1} n_p\Bigr)
       (i_{q}-1).
   \]
This is the unfolding of~\cite{BallardKolda2025}. Since unfolding only rearranges the entries, $\|\calA_{(k)}\| = \|\calA\|$ for every $k$, with the norms of Eqs.~\eqref{eq: inner product of matrix} and~\eqref{eq: inner product of tensor}.

\subsection{Tucker decomposition}
A Tucker decomposition writes a tensor as a core tensor multiplied by a factor matrix in each mode. We first define these products over $K$.

The \textbf{mode-$k$ tensor-times-matrix (TTM) product} of $\calA \in K^{n_1\times\cdots\times n_d}$ with $U \in K^{m \times n_k}$ is the tensor $\calA \times_k U \in K^{n_1\times\cdots\times n_{k-1}\times m \times n_{k+1}\times\cdots\times n_d}$ defined by $(\calA\times_k U)_{(k)} = U\calA_{(k)}$, that is,
\[
  (\calA \times_k U)(i_1, \dots, i_{k-1}, j, i_{k+1}, \dots, i_d)
  = \sum_{i_k=1}^{n_k} U(j, i_k)\, \calA(i_1, \dots, i_d).
\]
By associativity of matrix multiplication, $(\calA \times_k U) \times_k V = \calA \times_k (VU)$. Since $U$ multiplies the entries of $\calA$ from the left, products in different modes need not commute when $K$ is noncommutative: in general, $(\calA \times_k U) \times_q V \neq (\calA \times_q V) \times_k U$ for $k \neq q$ (see Example~\ref{ex: HOSVD fails}). We therefore 
set the convention
\[
  \bigl(\calA \times_1 U_1 \times_2 \cdots \times_d U_d\bigr)(i_1, \dots, i_d)
  = \sum_{j_1, \dots, j_d} U_1(i_1, j_1) \cdots U_d(i_d, j_d)\, \calA(j_1, \dots, j_d).
\]
When $K$ is commutative, products in different modes commute and the order does not matter, as in the real case. With this convention, the Tucker decomposition is defined as in the classical case.
\begin{definition}
A \textbf{Tucker decomposition} of $\calA \in K^{n_1 \times \cdots \times n_d}$ with \textbf{core size} $(r_1, \dots, r_d)$ is a factorization
\begin{equation}\label{eq:tucker}
    \calA = \calC \times_1 U_1 \times_2 \cdots \times_d U_d
\end{equation}
with core $\calC \in K^{r_1 \times \cdots \times r_d}$ and factor matrices $U_k \in K^{n_k \times r_k}$.
\end{definition}

We write $\calA = \llbracket \calC ; U_1, \dots, U_d \rrbracket$, and call this representation, stored as the core and the factor matrices, a \textbf{Tucker tensor}. See Figure~\ref{fig:tucker}.

\begin{figure}[h]
\vspace{0.5\baselineskip}
\centering
\setlength{\abovecaptionskip}{4pt}
\begin{tikzpicture}[
  core/.style={circle, draw, thick, minimum size=0.85cm, inner sep=1pt},
  line/.style={thick}
]
  \node[core] (c)    at (2.4,1.15) {$\mathcal{C}$};
  \node[core] (u1)   at (0,0)      {$U_1$};
  \node[core] (u2)   at (1.6,0)    {$U_2$};
  \node       (dots) at (3.2,0)    {$\cdots$};
  \node[core] (ud)   at (4.8,0)    {$U_d$};

  \draw[line] (c) -- node[above left]  {$r_1$} (u1);
   \draw[line] (c) -- node[pos=0.65, right] {$r_2$} (u2);
  \draw[line] (c) -- node[above right] {$r_d$} (ud);

  \foreach \nd/\idx in {u1/1, u2/2, ud/d}
    \draw[line] (\nd.south) -- ++(0,-0.6)
      node[midway, right] {$n_{\idx}$};
\end{tikzpicture}
\caption{The Tucker decomposition~\eqref{eq:tucker} as a tensor network.
Each edge is labeled by its size.}
\label{fig:tucker}
\end{figure}

Algorithm~\ref{alg:HOSVD} generalizes the higher-order SVD (HOSVD) to tensors over $K$. It computes each factor matrix from the leading left singular vectors of an unfolding, then projects $\calA$ onto the factors to form the core. As in the classical case, it can be run with a target core size or with a relative error tolerance $\varepsilon$.

\begin{algorithm}
\caption{HOSVD$_K$}\label{alg:HOSVD}
\KwIn{$\calA \in K^{n_1 \times \cdots \times n_d}$; target core size $(r_1, \dots, r_d)$ or
  relative error tolerance $\varepsilon \ge 0$}
\KwOut{Tucker tensor $\calT = \llbracket \calC; U_1, \dots, U_d \rrbracket$ with core size
  $(r_1, \dots, r_d)$ such that $\calT \approx \calA$}
\ForAll{$k = 1, \dots, d$}{
  $U_k \gets \mathrm{LLSV}_K(\calA_{(k)},\, r_k \text{ or } \varepsilon\|\calA\|/\sqrt{d})$
}
$\calC \gets \calA \times_d U_d^H \times_{d-1} U_{d-1}^H \cdots \times_1 U_1^H$
  \tcp*{modes in reverse order}
\Return{$\calT = \llbracket \calC; U_1, \dots, U_d \rrbracket$}
\end{algorithm}

\paragraph{Computational complexity of $\mathrm{HOSVD}_K$.}
Let $N=\prod_{q=1}^d n_q$, $N_k=N/n_k$, $n_K=\dim_F K$ and $\kappa_K=\sum_{\rho=1}^\ell d_\rho^3$. Each call to $\mathrm{LLSV}_K$ applies $\Pi$ to its input and $\Pi^{-1}$ to its output. Since $\Pi$ respects products and conjugate transposes, the same result is obtained by applying $\Pi$ to $\calA$ once, at cost $O(Nn_K^2)$, carrying out all LLSVs and mode products on the components, and applying $\Pi^{-1}$ only to the core and the factors. We count this implementation, which is the one we use in our experiments. By the complexity analysis of Algorithm~\ref{alg:LLSV}, the LLSV of $\calA_{(k)}\in K^{n_k\times N_k}$ costs $O(N\min(n_k,N_k)\,\kappa_K)$ on the components. A mode-$k$ product $\calX\times_k U$ with $U\in K^{r\times n_k}$ costs $O(r\,|\calX|\,\kappa_K)$, where $|\calX|$ is the number of entries of $\calX$, since in component $\rho$ each multiplication in $K$ becomes a product of $d_\rho\times d_\rho$ matrices. Forming the core therefore costs $O(N\kappa_K\sum_k r_k)$, which is dominated by the LLSVs since $r_k\le\min(n_k,N_k)$. The total cost is
\[
  O\Bigl(Nn_K^2 + N\kappa_K\sum_{k=1}^d\min(n_k,N_k)\Bigr),
\]
which for an $n\times\cdots\times n$ tensor is $O(n^dn_K^2 + d\,n^{d+1}\kappa_K)$, compared with $O(d\,n^{d+1})$ for $K=F$.\\

As for real tensors, sequential truncation reduces the cost of HOSVD~\cite{Vannieuwenhoven2012,hackbusch2019}. Algorithm~\ref{alg:STHOSVD} generalizes the sequentially truncated HOSVD (ST-HOSVD) to tensors over $K$. It compresses the tensor in each mode as soon as the factor matrix for that mode is computed, so later calls to $\mathrm{LLSV}_K$ act on smaller tensors. 
\begin{algorithm}[h!]
\caption{ST-HOSVD$_K$}\label{alg:STHOSVD}
\KwIn{$\calA \in K^{n_1 \times \cdots \times n_d}$; target core size $(r_1, \dots, r_d)$ or
  relative error tolerance $\varepsilon \ge 0$}
\KwOut{Tucker tensor $\calT = \llbracket \calC; U_1, \dots, U_d \rrbracket$ with core size
  $(r_1, \dots, r_d)$ such that $\calT \approx \calA$}
$\calC \gets \calA$\;
\For{$k = 1, \dots, d$}{
  $U_k \gets \mathrm{LLSV}_K(\calC_{(k)},\, r_k \text{ or } \varepsilon\|\calA\|/\sqrt{d})$
  \tcp*{from the partially compressed core}
  $\calC \gets \calC \times_k U_k^H$\;
}
\Return{$\calT = \llbracket \calC; U_1, \dots, U_d \rrbracket$}
\end{algorithm}

\paragraph{Computational complexity of $\mathrm{ST\text{-}HOSVD}_K$.}
As for $\mathrm{HOSVD}_K$, we apply $\Pi$ to $\calA$ once and work on the components. When mode $k$ is processed, the current core has size
$r_1\times\cdots\times r_{k-1}\times n_k\times\cdots\times n_d$, so its
mode-$k$ unfolding is $n_k\times M_k$ with
$M_k=\prod_{q<k}r_q\prod_{q>k}n_q$. The LLSV costs $O(n_kM_k\min(n_k,M_k)\,\kappa_K)$, and the compression, $O(r_kn_kM_k\,\kappa_K)$, is no larger. The total cost is
\[
  O\Bigl(Nn_K^2 + \kappa_K\sum_{k=1}^d n_kM_k\min(n_k,M_k)\Bigr),
\]
which for an $n\times\cdots\times n$ tensor with core size $(r,\dots,r)$ and $r\le n/2$ is $O(n^dn_K^2 + n^{d+1}\kappa_K)$, compared with $O(n^{d+1})$ for $K=F$.\\

When $K$ is commutative, HOSVD and ST-HOSVD inherit the classical
quasi-optimality bounds from
Theorem~\ref{thm: optimality of rank-k SVD approximation}. 
\begin{theorem}[Quasi-optimality of $\mathrm{HOSVD}_K$ and $\mathrm{ST\text{-}HOSVD}_K$]
\label{cor: optimality for Tucker decompositions}
Let $K$ be a finite-dimensional, unital, very proper, commutative $\ast$-algebra with an inner product that satisfies the adjoint identities, and let $\calT$ be the Tucker tensor computed from $\calA \in K^{n_1\times\cdots\times n_d}$ by Algorithm~\ref{alg:HOSVD} or~\ref{alg:STHOSVD}. If the algorithm is run with core size $(r_1,\dots,r_d)$, then
\begin{equation}\label{eq:hosvd-bound}
    \|\calA - \calT\| \leq \sqrt{d}\, \|\calA - \calT^\star\|,
\end{equation}
where $\calT^\star$ is a best Tucker approximation of $\calA$ with that core size. 
\end{theorem}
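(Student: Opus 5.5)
I will follow the classical argument of De Lathauwer et al.\ and Vannieuwenhoven et al., using Theorem~\ref{thm: optimality of rank-k SVD approximation} as the only ingredient that replaces the real Eckart--Young theorem. Throughout, write $P_k = U_kU_k^H \in K^{n_k\times n_k}$. Since $U_k^HU_k = I_{r_k}$, each $P_k$ is a projector. Its self-adjointness gives $\langle P_kX, Y\rangle = \langle X, P_kY\rangle$ for matrices over $K$; this follows entrywise from the adjoint identity $\langle ab,c\rangle = \langle b,a^\ast c\rangle$. Hence $\|X\|^2 = \|P_kX\|^2 + \|(I-P_k)X\|^2$.

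The first step uses commutativity of $K$. The mode products in different modes then commute, so the output of either algorithm can be written as $\calT = \calA\times_1 P_1\times_2\cdots\times_d P_d$, with the factors applied in any order. I will check directly that $\calC\times_1U_1\cdots\times_dU_d$ equals this expression, using $(\calA\times_kU)\times_kV = \calA\times_k(VU)$. Next, the projector $\calX\mapsto\calX\times_kP_k$ is self-adjoint for the tensor inner product, and for commuting factors the operators in distinct modes commute. This gives the telescoping orthogonal decomposition
\[
\calA-\calT=\sum_{k=1}^d \calA\times_1P_1\cdots\times_{k-1}P_{k-1}\times_k(I-P_k),
\]
whose terms are mutually orthogonal. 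Moreover, the $k$-th term has norm at most $\|\calA\times_k(I-P_k)\|$, because applying the remaining projectors is a contraction. Each of these facts is a one-line consequence of the adjoint identities together with commutativity.

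For HOSVD, $\|\calA\times_k(I-P_k)\| = \|\calA_{(k)} - U_kU_k^H\calA_{(k)}\|$, which is the rank-$r_k$ truncation error of an ordered SVD of the unfolding. By Theorem~\ref{thm: optimality of rank-k SVD approximation} this is at most $\|\calA_{(k)} - B\|$ for every $B$ with $\mathrm{rank}_K(B)\le r_k$. I will take $B = \calT^\star_{(k)}$. Its rank is at most $r_k$ because $\calT^\star_{(k)} = U^\star_k\,(\cdots)$ factors through $K^{r_k}$ (with commutativity this holds regardless of the mode ordering convention). Summing the squares over $k$ gives $\|\calA-\calT\|^2\le d\,\|\calA-\calT^\star\|^2$.

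For ST-HOSVD, I will use the same telescoping decomposition, with the $k$-th term equal to the error of step $k$ applied to the partially compressed tensor $\calA\times_1P_1\cdots\times_{k-1}P_{k-1}$. Optimality at step $k$ bounds this error by the error of the competitor obtained by also applying $P^\star_k$, the projector onto the column space of $\calT^\star_{(k)}$. That competitor's error is at most $\|\calA\times_k(I-P^\star_k)\|\le\|\calA-\calT^\star\|$, since the earlier projectors are contractions commuting with mode $k$. Finally, I will note that the compression $\calC\times_kU_k^H$ is norm-equivalent to projecting, since $\|X\times_kU_k^H\| = \|X\times_kP_k\|$.

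I expect the main obstacle to be verifying orthogonality and commutation of the mode projectors over $K$. This is precisely where commutativity is needed: in Example~\ref{ex: HOSVD fails} the factors from different modes multiply entries on the same side and fail to commute. I will carefully show that $\langle \calX\times_kP, \calY\rangle = \langle\calX, \calY\times_kP\rangle$ from the first adjoint identity. I will also show that, for commutative $K$, $(\calX\times_kP)\times_qQ = (\calX\times_qQ)\times_kP$ because entries satisfy $P_{ij}Q_{lm}x = Q_{lm}P_{ij}x$.
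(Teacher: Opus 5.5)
Your proposal is correct and, for HOSVD, follows the paper's proof step for step. The shared steps are the projectors $P_k=U_kU_k^H$, commutativity to write $\calT=\calA\times_1P_1\cdots\times_dP_d$, the orthogonal telescoping sum, the contraction bound $\|\calA\times_1P_1\cdots\times_{k-1}P_{k-1}\times_k(I-P_k)\|\le\|\calA\times_k(I-P_k)\|$, and Theorem~\ref{thm: optimality of rank-k SVD approximation} applied against $\calT^\star_{(k)}=U_k^\star M$.

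For ST-HOSVD the paper only cites Vannieuwenhoven et al.\ and Hackbusch, and your sketch is the standard argument from those sources. Make explicit two points. First, $P_k^\star$ can be taken as $\Pi^{-1}$ of the componentwise orthogonal projectors onto the column spaces of $\Pi_\rho(\calT^\star_{(k)})$; this is a self-adjoint idempotent of $K$-rank at most $r_k$ by Proposition~\ref{prop:rank}. Second, the step-$k$ optimality comparison transfers from the partially compressed core to $\calA\times_1P_1\cdots\times_{k-1}P_{k-1}$ by isometry invariance of $U_1,\dots,U_{k-1}$.
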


We prove Theorem~\ref{cor: optimality for Tucker decompositions} in Appendix~\ref{app: proof of corollaries}. 
Wang and Yang~\cite{wang2022hot} extended HOSVD and ST-HOSVD to tubal tensors and proved the $\sqrt{d}$ bound when $\mathbf{M}$ is a nonzero multiple of a unitary matrix. 
Theorem~\ref{cor: optimality for Tucker decompositions} gives this bound for any $\star_{\mathbf{M}}$-product with
$\mathbf{M}=\mathbf{D}\mathbf{Q}$.

\begin{corollary}[Wang and Yang~\cite{wang2022hot}]\label{cor:hot-svd}
For tubal tensors under the $\star_{\mathbf{M}}$-product with $\mathbf{M}=\mathbf{D}\mathbf{Q}$, $\mathrm{HOSVD}_K$ and $\mathrm{ST\text{-}HOSVD}_K$ satisfy the bound~\eqref{eq:hosvd-bound}.
\end{corollary}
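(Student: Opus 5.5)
The plan is to derive Corollary~\ref{cor:hot-svd} directly from Theorem~\ref{cor: optimality for Tucker decompositions} by checking its hypotheses for $K=\C^p$ (or $\R^p$ under the row condition of Example~\ref{ex: tubal algebras}) with the $\star_{\mathbf{M}}$-product and the Euclidean inner product. We also need to identify $\mathrm{HOSVD}_K$ and $\mathrm{ST\text{-}HOSVD}_K$ with the tubal algorithms of Wang and Yang.

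The hypotheses divide into four checks.
\begin{enumerate}
\item $K$ is finite-dimensional and unital, with unit $\mathbf{M}^{-1}\mathbf{1}$.
\item $K$ is commutative, since the entrywise product $\odot$ in the transform domain is commutative.
\item $K$ is very proper. The transform $a\mapsto \mathbf{M}a$ is a $\ast$-isomorphism onto a direct sum of copies of $\C$ (or of $\R$ and $\C$ in the real case), which is very proper. This is the argument already used in Corollary~\ref{cor:existence}.
\item The Euclidean inner product satisfies the adjoint identities.
\end{enumerate}

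For the fourth check I will reuse the computation in the proof of Corollary~\ref{cor:mor}. Set $\mathbf{G}=(\mathbf{M}\mathbf{M}^H)^{-1}$. Then $\langle a,b\rangle=\widehat b^H\mathbf{G}\widehat a$, and the first adjoint identity holds if and only if $\mathbf{G}$ is diagonal. When $\mathbf{M}=\mathbf{D}\mathbf{Q}$ with $\mathbf{Q}$ unitary, we get $\mathbf{M}\mathbf{M}^H=\mathbf{D}\mathbf{D}^H$, which is diagonal. Hence $\mathbf{G}$ is diagonal, with positive entries.

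The same computation works verbatim over $\C^p$: there $\widehat b$ and $\widehat c$ range over all of $\C^p$ and the inner product is complex, so the criterion is unchanged. The second adjoint identity then follows from the first by commutativity, since $\langle ba,c\rangle=\langle ab,c\rangle=\langle b,a^*c\rangle=\langle b,ca^*\rangle$.

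With all four hypotheses in place, Theorem~\ref{cor: optimality for Tucker decompositions} gives the bound~\eqref{eq:hosvd-bound} for $\mathrm{HOSVD}_K$ and $\mathrm{ST\text{-}HOSVD}_K$.

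The only step needing care, and the main obstacle, is matching these algorithms to the tubal HOSVD and ST-HOSVD. Here the structure isomorphism $\pi$ is $a\mapsto\mathbf{M}a$ with every $d_\rho=1$. Consequently $\Pi$ acts by applying $\mathbf{M}$ along the tubes, and $\mathrm{LLSV}_K$ computes leading singular vectors slice by slice in the transform domain. This is exactly the tubal construction, up to the choice of unitary phases in each slice. That freedom does not affect the projectors $U_kU_k^H$, and therefore does not affect $\calT$.

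We should also confirm that the tubal notion of core size agrees with Tucker core size over $K$. It does, because both are defined by factor matrices in $K^{n_k\times r_k}$. Finally, since $K$ is commutative the mode-product order convention plays no role.
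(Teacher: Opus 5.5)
Your proposal is correct and follows the paper's proof: verify that $K$ is commutative, obtain the adjoint identities from the computation in the proof of Corollary~\ref{cor:mor} (here $\mathbf{M}\mathbf{M}^H=\mathbf{D}^2$ is diagonal), and then apply Theorem~\ref{cor: optimality for Tucker decompositions}, with very properness supplied as in Corollary~\ref{cor:existence}. The step you flag as the main obstacle, matching these algorithms to Wang and Yang's tubal algorithms, is not needed: the statement concerns $\mathrm{HOSVD}_K$ and $\mathrm{ST\text{-}HOSVD}_K$ themselves, and the theorem's bound holds for any valid choice of leading singular vectors.
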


\begin{proof}
The algebra $K$ is commutative, and by the proof of Corollary~\ref{cor:mor} the Euclidean inner product satisfies the adjoint identities. The bound follows from Theorem~\ref{cor: optimality for Tucker decompositions}.
\end{proof}

For noncommutative $K$, the theorem can fail. Each $U_k$ captures the column space of an unfolding, but the factor matrices of the other modes multiply the entries from the left, which need not preserve that column space. We illustrate this in the following example.
\begin{example}\label{ex: HOSVD fails}
Let $K = \mathrm{M}_2(\mathbb{R})$ with the Frobenius inner product,
and let
\[
R = \begin{pmatrix} 0 & -1 \\ 1 & 0 \end{pmatrix}, \quad
D = \begin{pmatrix} 1 & 0 \\ 0 & -1 \end{pmatrix}, \quad
S = \begin{pmatrix} 0 & 1 \\ 1 & 0 \end{pmatrix}.
\]
The Tucker tensor
$\mathcal{A}(i_1,i_2,i_3) = u_{i_1} v_{i_2} w_{i_3}$ with core $I$
and factors $u = (I, S)^\top$, $v = (2I, D)^\top$, and $w = (2I, R)^\top$
has slices
\[
\mathcal{A}(:,:,1) = \begin{pmatrix} 4I & 2D \\ 4S & 2R \end{pmatrix},
\qquad
\mathcal{A}(:,:,2) = \begin{pmatrix} 2R & -S \\ 2D & -I \end{pmatrix}.
\]
The unfolding $\mathcal{A}_{(1)}$ has rank one, and since $SDS = -D$
and $SRS = -R$, we have
\[
\mathcal{A}_{(2)}\mathcal{A}_{(2)}^H
= \mathcal{A}_{(3)}\mathcal{A}_{(3)}^H
= \begin{pmatrix} 40I & 0 \\ 0 & 10I \end{pmatrix},
\]
so $\mathcal{A}_{(2)}$ and $\mathcal{A}_{(3)}$ have rank two over
$K$, with distinct singular values.
The leading singular vectors are thus unique up to unitary elements,
which do not affect the output, and HOSVD$_K$ with core size
$(1,1,1)$ uses $U_1 = u/\sqrt{2}$ and $U_2 = U_3 = (I, 0)^\top$.
It keeps the fiber $\mathcal{A}(:,1,1)$ and zeroes the rest, so
$\|\mathcal{A} - \mathcal{T}\| = (3/5)\|\mathcal{A}\|$,
although $\mathcal{A}$ has core size $(1,1,1)$. As the singular
values are separated, the failure persists for tensors near
$\mathcal{A}$. ST-HOSVD$_K$ recovers $\mathcal{A}$ exactly, since it
processes mode 1 first.
\end{example}

For quaternion tensors, Ya and Yang~\cite{YaYang2023} give an example in which the error of the HOSVD exceeds the sum of the discarded tails.

\subsection{Tensor train decomposition}

A tensor train (TT) decomposition writes a tensor as a chain of cores, one for each mode, with matrices at the two ends and three-way tensors in between. Since $K$ need not be commutative, we multiply the cores in the order of the modes.

\begin{definition}
\label{def:tt}
A \textbf{tensor train (TT) decomposition} of $\calA \in K^{n_1 \times \cdots \times n_d}$ with \textbf{TT sizes} $(r_1, \dots, r_{d-1})$ is a factorization
\begin{equation}\label{eq:tt}
    \calA(i_1, \dots, i_d) = \sum_{j_1=1}^{r_1} \cdots \sum_{j_{d-1}=1}^{r_{d-1}}
    \calC_1(i_1, j_1)\, \calC_2(j_1, i_2, j_2) \cdots \calC_{d-1}(j_{d-2}, i_{d-1}, j_{d-1})\,
    \calC_d(j_{d-1}, i_d)
\end{equation}
with cores $\calC_1 \in K^{n_1 \times r_1}$, $\calC_k \in K^{r_{k-1} \times n_k \times r_k}$ for $k = 2, \dots, d-1$, and $\calC_d \in K^{r_{d-1} \times n_d}$.
\end{definition}

We write $\calA = \llbracket \calC_1, \dots, \calC_d \rrbracket$, and call this representation, stored as its cores, a \textbf{TT tensor}. See Figure~\ref{fig:tt}.

\begin{figure}[h]
\vspace{0.5\baselineskip}
\centering
\setlength{\abovecaptionskip}{4pt}
\begin{tikzpicture}[
  core/.style={circle, draw, thick, minimum size=0.85cm, inner sep=1pt},
  line/.style={thick}
]
  \node[core] (c1)   at (0,0)   {$\mathcal{C}_1$};
  \node[core] (c2)   at (1.8,0) {$\mathcal{C}_2$};
  \node       (dots) at (3.6,0) {$\cdots$};
  \node[core] (cd1)  at (5.4,0) {$\mathcal{C}_{d-1}$};
  \node[core] (cd)   at (7.2,0) {$\mathcal{C}_d$};

  \draw[line] (c1)   -- node[above] {$r_1$}     (c2);
  \draw[line] (c2)   -- node[above] {$r_2$}     (dots);
  \draw[line] (dots) -- node[above] {$r_{d-2}$} (cd1);
  \draw[line] (cd1)  -- node[above] {$r_{d-1}$} (cd);

  \foreach \nd/\idx in {c1/1, c2/2, cd1/d-1, cd/d}
    \draw[line] (\nd.south) -- ++(0,-0.6)
      node[midway, right] {$n_{\idx}$};
\end{tikzpicture}
\caption{The TT decomposition~\eqref{eq:tt} as a tensor network. Each
edge is labeled by its size.}
\label{fig:tt}
\end{figure}

Algorithm~\ref{alg:TT-SVD} generalizes the TT-SVD to tensors over $K$. It computes one core at a time from the leading left singular vectors of a reshaping, where $\mathrm{reshape}$ rearranges the entries in the same index order as the unfoldings. As in the classical case, it can be run with target TT sizes or with a relative error tolerance $\varepsilon$.
\begin{algorithm}[h!]
\caption{TT-SVD$_K$}\label{alg:TT-SVD}
\KwIn{$\calA \in K^{n_1 \times \cdots \times n_d}$; target TT sizes $(r_1, \dots, r_{d-1})$ or
  relative error tolerance $\varepsilon \ge 0$}
\KwOut{cores $\calC_1, \dots, \calC_d$ of a TT approximation of $\calA$ with TT sizes
  $(r_1, \dots, r_{d-1})$}
$r_0 \gets 1$; \quad $Y \gets \calA$\;
\For{$k = 1, \dots, d-1$}{
  $\bar Y \gets \mathrm{reshape}\bigl(Y,\ (r_{k-1} n_k) \times (n_{k+1} \cdots n_d)\bigr)$\;
  $C_k \gets \mathrm{LLSV}_K\bigl(\bar Y,\ r_k \text{ or } \varepsilon\|\calA\|/\sqrt{d-1}\bigr)$
    \tcp*{$C_k \in K^{(r_{k-1} n_k) \times r_k}$}
      $Y \gets C_k^H \bar Y$ \tcp*{$Y \in K^{r_k \times (n_{k+1} \cdots n_d)}$}
  $\calC_k \gets \mathrm{reshape}(C_k,\ r_{k-1} \times n_k \times r_k)$\;

}
$\calC_d \gets \mathrm{reshape}(Y,\ r_{d-1} \times n_d)$\;
\Return{$\calC_1, \dots, \calC_d$}
\end{algorithm}

\paragraph{Computational complexity of $\mathrm{TT\text{-}SVD}_K$.}
As for $\mathrm{HOSVD}_K$, we apply $\Pi$ to $\calA$ once and work on the components. Let $P_k=\prod_{q>k}n_q$ and $r_0=1$. At step $k$, the LLSV of $\bar Y\in K^{(r_{k-1}n_k)\times P_k}$ costs $O(r_{k-1}n_kP_k\min(r_{k-1}n_k,P_k)\,\kappa_K)$, and the product $C_k^H\bar Y$, $O(r_kr_{k-1}n_kP_k\,\kappa_K)$, is no larger since $r_k\le\min(r_{k-1}n_k,P_k)$. The total cost is
\[
  O\Bigl(Nn_K^2 + \kappa_K\sum_{k=1}^{d-1} r_{k-1}n_kP_k\min(r_{k-1}n_k,P_k)\Bigr),
\]
which for an $n\times\cdots\times n$ tensor with all TT sizes equal to $r$ is $O\bigl(n^dn_K^2 + (n^{d+1}+r^2n^d)\,\kappa_K\bigr)$, compared with $O(n^{d+1}+r^2n^d)$ for $K=F$.\\

Unlike TTM products, the products in a TT decomposition follow the order of the modes, so Theorem~\ref{thm: optimality of rank-k SVD approximation} gives the classical quasi-optimality bound even for noncommutative $K$.
\begin{theorem}[Quasi-optimality of $\mathrm{TT\text{-}SVD}_K$]\label{cor: optimality for TT-SVD}
Let $K$ be a finite-dimensional, unital, very proper $\ast$-algebra with an inner product that satisfies the adjoint identities, and let $\calT$ be the TT tensor computed from $\calA\in K^{n_1\times\cdots\times n_d}$ by Algorithm~\ref{alg:TT-SVD}. If the algorithm is run with TT sizes $(r_1,\dots,r_{d-1})$, then
\begin{equation}\label{eq:tt-bound}
    \|\calA - \calT\| \leq \sqrt{d-1}\, \|\calA - \calT^\star\|,
\end{equation}
where $\calT^\star$ is a best TT approximation of $\calA$ with those TT sizes. 
\end{theorem}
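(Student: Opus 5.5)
I would follow Oseledets' classical argument, running it over $K$ with the Frobenius norm of Eqs.~\eqref{eq: inner product of matrix}--\eqref{eq: inner product of tensor}. The inputs I need are Theorem~\ref{thm: optimality of rank-k SVD approximation}, which applies because the inner product satisfies the adjoint identities, and Proposition~\ref{prop:rank}.

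\textbf{Step 1: the benchmark errors.} Let $\calA^{\langle k\rangle}\in K^{(n_1\cdots n_k)\times(n_{k+1}\cdots n_d)}$ be the $k$-th sequential unfolding of $\calA$. Let $\varepsilon_k$ be the best rank-$r_k$ approximation error of this unfolding. By Theorem~\ref{thm: optimality of rank-k SVD approximation}, $\varepsilon_k$ is attained by the rank-$r_k$ truncation of an ordered SVD of $\calA^{\langle k\rangle}$.

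Any TT tensor $\calT^\star$ with sizes $(r_1,\dots,r_{d-1})$ has $k$-th unfolding $P_kQ_k$. Here $P_k$ collects the products $\calC_1\cdots\calC_k$ and $Q_k$ collects $\calC_{k+1}\cdots\calC_d$. This factorization respects the noncommutative order because the TT products follow the modes, so $\mathrm{rank}_K(\calT^{\star\langle k\rangle})\le r_k$. Hence $\varepsilon_k\le\|\calA-\calT^\star\|$ for every $k$.

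\textbf{Step 2: an orthogonal error decomposition.} Write the output as $\calT = C_1\cdots C_{d-1}$ applied to the final $Y$. Define $\Phi_k\in K^{(n_1\cdots n_k)\times r_k}$ as the matrix whose rows are the products of the first $k$ cores. It satisfies $\Phi_k^H\Phi_k=I$ by induction, since $\Phi_k=(\Phi_{k-1}\otimes I_{n_k})C_k$ in the appropriate reshaped sense and $C_k^HC_k=I$.

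The algorithm gives the telescoping identity
\[
\calA^{\langle d-1\rangle}-\calT^{\langle d-1\rangle}=\sum_{k=1}^{d-1}(\text{error at step }k),
\]
where the $k$-th term is $\Phi_{k-1}$ (extended by the identity on mode $k$) applied to $(I-C_kC_k^H)\bar Y_k$.

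I will show these terms are mutually orthogonal in the Frobenius inner product, with norms at most $\|(I-C_kC_k^H)\bar Y_k\|$. This needs two facts.
\begin{itemize}
\item Left multiplication by a matrix with orthonormal columns is an isometry: $\langle \Phi X,\Phi Z\rangle=\langle X,\Phi^H\Phi Z\rangle$. This follows entrywise from the adjoint identity $\langle ab,c\rangle=\langle b,a^*c\rangle$, which yields $\langle MX,Z\rangle=\langle X,M^HZ\rangle$ for matrices over $K$.
\item The projector $I-C_kC_k^H$ annihilates against $C_k$: $C_k^H(I-C_kC_k^H)=0$.
\end{itemize}
Together these give $\|\calA-\calT\|^2=\sum_k\|(I-C_kC_k^H)\bar Y_k\|^2$.

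\textbf{Step 3: bound each step.} The remaining task is $\|(I-C_kC_k^H)\bar Y_k\|\le\varepsilon_k$. Here $\bar Y_k$ is a reshaping of $\Phi_{k-1}^H\calA^{\langle k\rangle}$ (with the identity on mode $k$ folded in), that is, $\Psi_k^H\calA^{\langle k\rangle}$ for a matrix $\Psi_k$ with orthonormal columns.

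Take a best rank-$r_k$ approximation $B$ of $\calA^{\langle k\rangle}$. Then $\Psi_k^HB$ has rank at most $r_k$ over $K$, because $B=PQ$ implies $\Psi_k^HB=(\Psi_k^HP)Q$. Since $C_kC_k^H\bar Y_k$ is an optimal truncation of $\bar Y_k$ by Theorem~\ref{thm: optimality of rank-k SVD approximation} (the LLSV output comes from an ordered SVD), we get
\[
\|(I-C_kC_k^H)\bar Y_k\|\le\|\Psi_k^H(\calA^{\langle k\rangle}-B)\|\le\|\calA^{\langle k\rangle}-B\|=\varepsilon_k.
\]
The last inequality holds because $\Psi_k^H$ is a contraction: $\Psi_k\Psi_k^H$ is an orthogonal projector under the adjoint identities, so $\|\Psi_k^HX\|^2=\langle X,\Psi_k\Psi_k^HX\rangle\le\|X\|^2$.

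\textbf{Conclusion and main obstacle.} Summing gives $\|\calA-\calT\|^2\le\sum_k\varepsilon_k^2\le(d-1)\|\calA-\calT^\star\|^2$, which is the bound~\eqref{eq:tt-bound}.

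The main obstacle is Step 2. I have to check carefully that every reshaping and every multiplication is a \emph{left} multiplication by a matrix over $K$ acting on rows, so that only the first adjoint identity is used and noncommutativity never forces reordering factors. This is exactly where TT differs from HOSVD.
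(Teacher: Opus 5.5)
Your proposal is correct and follows the paper's route. The paper proves this theorem by running Oseledets' error-identity argument over $K$, using only the Eckart--Young theorem over $K$ (Theorem~\ref{thm: optimality of rank-k SVD approximation}) and the adjoint-identity consequences in Proposition~\ref{prop: matrix properties}: matrices with orthonormal columns act as isometries, and the orthogonal-projector identities hold. As you observe, noncommutativity is harmless because the TT cores are multiplied in mode order.

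Your write-up is more explicit than the paper's, which only sketches the TT case. The one point you leave implicit is the following: for $k<l$, the $k$-th unfolding of the $l$-th error term has the form $\Phi_k Z$, because $\Phi_{l-1}$ contains $\Phi_k$ as a left factor. This is what lets $C_k^H(I-C_kC_k^H)=0$ give the orthogonality.
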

We prove Theorem~\ref{cor: optimality for TT-SVD} in Appendix~\ref{app: proof of corollaries}.
For tubal tensors under the t-product, Ahmadi-Asl et al.~\cite{ahmadiasl2026} introduced a tubal tensor train and bounded the error of their TT-SVD-type
algorithm by the discarded tails. Theorem~\ref{cor: optimality for TT-SVD} gives the quasi-optimality bound for any $\star_{\mathbf{M}}$-product with $\mathbf{M}=\mathbf{D}\mathbf{Q}$, and in particular for the t-product.

\begin{corollary}[Ahmadi-Asl et al.~\cite{ahmadiasl2026}]\label{cor:ttt}
For tubal tensors under the t-product, $\mathrm{TT\text{-}SVD}_K$ satisfies
the bound~\eqref{eq:tt-bound}.
\end{corollary}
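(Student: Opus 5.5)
The plan is to verify the two hypotheses of Theorem~\ref{cor: optimality for TT-SVD} for the t-product algebra and then invoke that theorem directly. The algebra $K$ of tubes under the t-product is the $\star_{\mathbf{M}}$-product of Example~\ref{ex: tubal algebras} with $\mathbf{M}=\mathbf{F}$ the discrete Fourier transform. It is finite-dimensional and unital, with unit $\mathbf{F}^{-1}\mathbf{1}$. It is also very proper, by Corollary~\ref{cor:existence}: the transform $\mathbf{F}$ gives a $\ast$-isomorphism onto a direct sum of copies of $\R$ or $\C$. The case of real tubes is covered by the row condition of Example~\ref{ex: tubal algebras}, since the rows of the DFT matrix are real or come in conjugate pairs.

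The remaining hypothesis is that the Euclidean inner product satisfies the adjoint identities. For this I would reuse the computation in the proof of Corollary~\ref{cor:mor}. There the adjoint identities were shown to hold exactly when $\mathbf{M}\mathbf{M}^H$ is diagonal, equivalently when $\mathbf{M}=\mathbf{D}\mathbf{Q}$ with $\mathbf{D}$ nonsingular real diagonal and $\mathbf{Q}$ unitary. For the unnormalized DFT we have $\mathbf{F}\mathbf{F}^H=p\,I$, so $\mathbf{F}=\sqrt{p}\,(\mathbf{F}/\sqrt{p})$ with $\mathbf{F}/\sqrt{p}$ unitary, which is of this form with $\mathbf{D}=\sqrt{p}\,I$.

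Alternatively, the identities can be checked directly from Parseval's identity. The inner product is $\langle a,b\rangle=\tfrac1p\,\widehat b^H\widehat a$ (or its real part over $\R$). The product and involution act entrywise on Fourier coefficients, as multiplication and complex conjugation. Hence
\[
\langle a\star b,c\rangle=\tfrac1p\sum_j \overline{\widehat c_j}\,\widehat a_j\widehat b_j=\tfrac1p\sum_j \overline{\overline{\widehat a_j}\widehat c_j}\,\widehat b_j=\langle b,a^*\star c\rangle.
\]
Since $K$ is commutative, the second identity is the same as the first.

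With all hypotheses verified, Theorem~\ref{cor: optimality for TT-SVD} gives the bound~\eqref{eq:tt-bound}. The only point needing care is the real-versus-complex bookkeeping, that is, confirming that the real t-product algebra is a $\ast$-subalgebra to which Corollary~\ref{cor:mor}'s argument applies. The rest is a direct specialization, and the general statement for $\mathbf{M}=\mathbf{D}\mathbf{Q}$ follows by the same argument.
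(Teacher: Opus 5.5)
Your proposal is correct and follows the paper's proof. You write the DFT as $\sqrt{p}$ times a unitary matrix, use the computation in the proof of Corollary~\ref{cor:mor} to get the adjoint identities for the Euclidean inner product, and apply Theorem~\ref{cor: optimality for TT-SVD}; your explicit check of very properness via Corollary~\ref{cor:existence} and the direct Parseval verification are sound extras that the paper leaves implicit.
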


\begin{proof}
The t-product is the $\star_{\mathbf{M}}$-product with $\mathbf{M}=\mathbf{F}_n=\sqrt{n}\,\mathbf{W}$, where $\mathbf{W}$ is
unitary.
By the proof of Corollary~\ref{cor:mor}, the Euclidean inner product then satisfies the adjoint identities, and the bound follows from
Theorem~\ref{cor: optimality for TT-SVD}.
\end{proof}

Over the quaternions, TT decompositions with cores multiplied in the order of the modes were introduced by Miao et al.~\cite{miao2022}, who
constructed exact decompositions from successive quaternion matrix factorizations but gave no error bound for truncated decompositions.
Theorem~\ref{cor: optimality for TT-SVD} provides one.

\begin{corollary}\label{cor:qtt}
For quaternion tensors, $\mathrm{TT\text{-}SVD}_K$ satisfies the
bound~\eqref{eq:tt-bound}.
\end{corollary}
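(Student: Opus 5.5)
The plan is to deduce the statement directly from Theorem~\ref{cor: optimality for TT-SVD}, whose hypotheses reduce to checking that $K=\H$, with its quaternionic conjugation and the inner product $\langle p,q\rangle=\operatorname{Re}(p\overline q)$, is finite-dimensional, unital, very proper, and satisfies the adjoint identities.

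The first three properties are immediate from the earlier discussion. $\H$ is a $4$-dimensional real algebra with unit $1$. It is very proper, as already noted after Definition~\ref{ax: condition for existence of SVD}, because $\sum_i a_i^\ast a_i=\sum_i|a_i|^2$ vanishes only when every $a_i=0$.

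The main work, though still short, is the adjoint identities. I would use two facts: $\operatorname{Re}(xy)=\operatorname{Re}(yx)$ for quaternions, and $\overline{xy}=\overline y\,\overline x$. For the left identity, compute
\[
\langle b,a^\ast c\rangle=\operatorname{Re}\bigl(b\,\overline{\overline a c}\bigr)=\operatorname{Re}(b\,\overline c\,a)=\operatorname{Re}(a\,b\,\overline c)=\langle ab,c\rangle .
\]
The last step is the cyclic property $\operatorname{Re}(x(yz))=\operatorname{Re}((yz)x)$ together with associativity. For the right identity, compute
\[
\langle b,ca^\ast\rangle=\operatorname{Re}\bigl(b\,\overline{c\overline a}\bigr)=\operatorname{Re}(b\,a\,\overline c)=\langle ba,c\rangle .
\]
This one holds directly. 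The cyclicity of $\operatorname{Re}$ follows by bilinearity from checking basis elements: $\operatorname{Re}(xy)=x_0y_0-x_1y_1-x_2y_2-x_3y_3$ is symmetric in $x,y$.

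Alternatively, one could invoke the converse part of the framework: the identity structure isomorphism $\pi$ with $c_1=1$ realizes Eq.~\eqref{eq: relation of inner products}, since $\operatorname{Re}\operatorname{tr}(pq^H)=\operatorname{Re}(p\overline q)$ for $1\times1$ quaternion matrices. The direct check above is simpler, so I would present that.

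With all hypotheses verified, Theorem~\ref{cor: optimality for TT-SVD} applies and yields \eqref{eq:tt-bound} for quaternion tensors. No step is a real obstacle; the only point needing care is the order of factors in the noncommutative computation, which the cyclicity of $\operatorname{Re}$ handles.
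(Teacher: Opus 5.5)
Your proposal is correct and follows essentially the same route as the paper: verify that $\H$ with $\langle p,q\rangle=\operatorname{Re}(p\overline{q})$ satisfies the adjoint identities using $\operatorname{Re}(xy)=\operatorname{Re}(yx)$ and $\overline{xy}=\overline{y}\,\overline{x}$, then apply Theorem~\ref{cor: optimality for TT-SVD}. Your checks that $\H$ is finite-dimensional, unital and very proper, and your justification of the cyclicity of $\operatorname{Re}$, are details the paper leaves implicit.
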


\begin{proof}
For $K=\mathbb{H}$, the TT decomposition of Definition~\ref{def:tt} is that of~\cite[Def.~1]{miao2022}. Take the inner product
$\langle p,q\rangle=\operatorname{Re}(p\overline{q})$. 
Since $\operatorname{Re}(xy)=\operatorname{Re}(yx)$ for $x,y\in\mathbb{H}$, we have
$\langle pq,r\rangle=\operatorname{Re}(q\,\overline{r}\,p)
=\langle q,\overline{p}\,r\rangle$ 
and
$\langle qp,r\rangle=\operatorname{Re}(q\,p\,\overline{r})
=\langle q,r\,\overline{p}\rangle$,
so the adjoint identities hold. The bound follows from Theorem~\ref{cor: optimality for TT-SVD}.
\end{proof}

\subsection{Tree tensor network decomposition}\label{sec:ttn}

A tree tensor network (TTN) decomposition generalizes the Tucker and TT decompositions by arranging the cores along a rooted tree, rather than a
star (Tucker) or a path (TT).

Let $T$ be a rooted tree with root $v_0$ and vertex set $\mathcal{V}(T)$. 
Each vertex $v\ne v_0$ is joined to its parent by an \textbf{internal edge}, and $T$ has $d$ \textbf{external edges}, labeled $1,\dots,d$, each attached to one vertex.
Every vertex without children has at least one external edge. 
With the external edges $1,\dots,d$ drawn from left to right, the \textbf{order of $T$} lists the vertices from left to right, placing
each vertex after its children. 
For example, the order of the TT tree in Figure~\ref{fig:tt} is $\mathcal{C}_1,\dots,\mathcal{C}_d$, and the order of the Tucker tree in
Figure~\ref{fig:tucker} is $U_1,\dots,U_d,\mathcal{C}$.

\begin{definition}\label{def:ttn}
Let $\mathcal{A}\in K^{n_1\times\cdots\times n_d}$. A \textbf{tree tensor
network (TTN) decomposition} of $\mathcal{A}$ on $T$ with \textbf{TTN
sizes} $(r_v)_{v\in\mathcal{V}(T)\setminus\{v_0\}}$ is a factorization
\begin{equation}\label{eq:ttn}
  \mathcal{A}(i_1,\dots,i_d)
  =\sum_{\mathbf{j}}\ \prod_{v\in\mathcal{V}(T)}
   \mathcal{C}_v\big(\mathbf{k}_v,\,j_v\big),
\end{equation}
where the internal edge between $v$ and its parent has size $r_v$ and
carries the index $j_v$, the external edge $\mu$ has size $n_\mu$ and
carries the index $i_\mu$, and $\mathbf{k}_v$ lists the indices of the
edges below $v$ from left to right. The sum runs over all
$\mathbf{j}=(j_v)_{v\in\mathcal{V}(T)\setminus\{v_0\}}$ with
$1\le j_v\le r_v$, and the product is taken in the order of $T$. For the
root, the index $j_{v_0}$ is omitted.
\end{definition}

We write $\mathcal{A}=[\![(\mathcal{C}_v)_{v\in\mathcal{V}(T)}]\!]$ 
and call this representation, stored as its cores, a \textbf{TTN tensor}. 

\begin{example}\label{ex:ttn}
For the tree in Figure~\ref{fig:ttn}, the order of $T$ is
$\mathcal{C}_1,\dots,\mathcal{C}_6$, and Eq.~\eqref{eq:ttn} reads
\begin{equation*}
  \mathcal{A}(i_1,\dots,i_5)
  =\sum_{j_1=1}^{r_1}\cdots\sum_{j_5=1}^{r_5}
   \mathcal{C}_1(i_1,j_1)\,\mathcal{C}_2(i_2,i_3,j_2)\,
   \mathcal{C}_3(j_1,j_2,j_3)\,\mathcal{C}_4(i_4,j_4)\,
   \mathcal{C}_5(i_5,j_5)\,\mathcal{C}_6(j_3,j_4,j_5),
\end{equation*}
where $j_k$ is the index of the internal edge between $\mathcal{C}_k$ and its parent.
\end{example}

\begin{figure}[h]
\vspace{0.5\baselineskip}
\centering
\setlength{\abovecaptionskip}{4pt}
\begin{tikzpicture}[
  core/.style={circle, draw, thick, minimum size=0.85cm, inner sep=1pt},
  line/.style={thick}
]
  \node[core] (c6) at (3.2,2.2) {$\mathcal{C}_6$};
  \node[core] (c3) at (0.9,1.1) {$\mathcal{C}_3$};
  \node[core] (c4) at (2.5,1.1) {$\mathcal{C}_4$};
  \node[core] (c5) at (4.4,1.1) {$\mathcal{C}_5$};
  \node[core] (c1) at (0,0)     {$\mathcal{C}_1$};
  \node[core] (c2) at (1.8,0)   {$\mathcal{C}_2$};

  \draw[line] (c1) -- node[above left]  {$r_1$} (c3);
  \draw[line] (c2) -- node[above right] {$r_2$} (c3);
  \draw[line] (c3) -- node[above left]  {$r_3$} (c6);
   \draw[line] (c4) -- node[pos=0.4, right] {$r_4$} (c6);
  \draw[line] (c5) -- node[above right] {$r_5$} (c6);

  \draw[line] (c1.south) -- ++(0,-0.45)
    node[midway, right] {$n_1$};
  \draw[line] (c2.-120) -- ++(-0.25,-0.45)
    node[midway, left] {$n_2$};
  \draw[line] (c2.-60) -- ++(0.25,-0.45)
    node[midway, right] {$n_3$};
  \draw[line] (c4.south) -- ++(0,-0.45)
    node[midway, right] {$n_4$};
  \draw[line] (c5.south) -- ++(0,-0.45)
    node[midway, right] {$n_5$};
\end{tikzpicture}
\caption{A tree tensor network with $d=5$. Each edge is labeled by its
size, and the cores are numbered in the order of the tree.}
\label{fig:ttn}
\end{figure}

Algorithm~\ref{alg:ttn-svd} computes a TTN decomposition of a tensor over $K$.
For each vertex $v\ne v_0$, in the order of $T$, it cuts the internal edge between $v$ and its parent. 
In each iteration, $Y$ is unfolded along this cut, with rows indexed by the edges below $v$ and columns indexed by the remaining modes. 
We write $s_1,\dots,s_m$ for the sizes of the edges below $v$, from left to right, and $N_v$ for the number of columns. 
Figure~\ref{fig:ttn-steps} illustrates this idea. 

\begin{figure}[h]
\vspace{0.5\baselineskip}
\centering
\setlength{\abovecaptionskip}{4pt}
\begin{tikzpicture}[
  core/.style={circle, draw, thick, minimum size=0.8cm, inner sep=1pt},
  rem/.style={rectangle, rounded corners, draw, thick,
              minimum height=0.6cm},
  line/.style={thick},
  cut/.style={dashed}
]
  \begin{scope}
    \node[rem, minimum width=3.0cm] at (1.2,0) {$Y=\mathcal{A}$};
    \foreach \x/\k in {0/1, 0.6/2, 1.2/3, 1.8/4, 2.4/5}
      \draw[line] (\x,-0.3) -- (\x,-0.85) node[below] {$n_{\k}$};
    \draw[cut] (0,-0.8) ellipse (0.28 and 0.55);
  \end{scope}

  \draw[->, thick] (2.9,0) -- node[above] {\small step 1} (3.6,0);

  \begin{scope}[xshift=4.0cm]
    \node[rem, minimum width=2.8cm] at (1.6,0) {$Y$};
    \node[core] (a1) at (0,-1.25) {$\mathcal{C}_1$};
    \draw[line] (a1) -- node[pos=0.6, left] {$r_1$} (0.4,-0.3);
    \draw[line] (a1.south) -- ++(0,-0.45) node[below] {$n_1$};
    \foreach \x/\k in {1.0/2, 1.6/3, 2.2/4, 2.8/5}
      \draw[line] (\x,-0.3) -- (\x,-0.85) node[below] {$n_{\k}$};
    \draw[cut] (1.3,-0.8) ellipse (0.55 and 0.55);
  \end{scope}

  \draw[->, thick] (7.2,0) -- node[above] {\small step 2} (7.9,0);

  \begin{scope}[xshift=8.1cm]
    \node[rem, minimum width=2.8cm] at (1.6,0) {$Y$};
    \node[core] (b1) at (0,-1.25)   {$\mathcal{C}_1$};
    \node[core] (b2) at (1.2,-1.25) {$\mathcal{C}_2$};
    \draw[line] (b1) -- node[pos=0.6, left] {$r_1$} (0.4,-0.3);
    \draw[line] (b2) -- node[right] {$r_2$} (1.2,-0.3);
    \draw[line] (b1.south) -- ++(0,-0.45) node[below] {$n_1$};
    \draw[line] (b2.-120) -- ++(-0.2,-0.45) node[below] {$n_2$};
    \draw[line] (b2.-60)  -- ++(0.2,-0.45)  node[below] {$n_3$};
    \foreach \x/\k in {2.2/4, 2.8/5}
      \draw[line] (\x,-0.3) -- (\x,-0.85) node[below] {$n_{\k}$};
  \end{scope}
\end{tikzpicture}
\caption{The first two steps of Algorithm~\ref{alg:ttn-svd} on the tree of Figure~\ref{fig:ttn}. Each edge is labeled by its size, and the
dashed curve encloses the edges that index the rows of the next reshape.}
\label{fig:ttn-steps}
\end{figure}

\begin{algorithm}[h!]
\caption{TTN-SVD$_K$}\label{alg:ttn-svd}
\KwIn{$\mathcal{A}\in K^{n_1\times\cdots\times n_d}$; rooted tree $T$;
      target TTN sizes $(r_v)_{v\in\mathcal{V}(T)\setminus\{v_0\}}$ or
      relative error tolerance $\varepsilon\ge0$}
\KwOut{cores $(\mathcal{C}_v)_{v\in\mathcal{V}(T)}$ of a TTN approximation
       of $\mathcal{A}$ with TTN sizes $(r_v)$}
$Y\leftarrow\mathcal{A}$; $\ b\leftarrow\#\mathcal{V}(T)-1$\;
\For{$v\in\mathcal{V}(T)\setminus\{v_0\}$ in the order of~$T$ }{
  $\bar{Y}\leftarrow\operatorname{reshape}\big(Y,\
     (s_1\cdots s_m)\times N_v\big)$
    \tcp*{cut the edge from $v$ to its parent}
  $C_v\leftarrow\mathrm{LLSV}_K\big(\bar{Y},\ r_v\text{ or }
     \varepsilon\|\mathcal{A}\|/\sqrt{b}\big)$\;
     $Y\leftarrow C_v^H\bar{Y}$\;
  $\mathcal{C}_v\leftarrow\operatorname{reshape}\big(C_v,\
     s_1\times\cdots\times s_m\times r_v\big)$\;
  
}
$\mathcal{C}_{v_0}\leftarrow\operatorname{reshape}\big(Y,\
   s_1\times\cdots\times s_m\big)$\;
\Return{$(\mathcal{C}_v)_{v\in\mathcal{V}(T)}$}
\end{algorithm}

\paragraph{Computational complexity of TTN-SVD$_K$.} 
As for TT-SVD$_K$, we apply $\Pi$ to $\mathcal{A}$ once and work on the components. 
Let $M_v=s_1\cdots s_m$ be the number of rows of $\bar{Y}$ at vertex $v$. 
The LLSV of $\bar{Y}\in K^{M_v\times N_v}$ costs $O(M_vN_v\min(M_v,N_v)\,\kappa_K)$, 
and the product $C_v^H\bar{Y}$, $O(r_vM_vN_v\,\kappa_K)$, is no larger since $r_v\le\min(M_v,N_v)$. 
The reshapes only rearrange the $M_vN_v$ entries of $Y$. The total cost is
\[
  O\Big(N n_K^2+\kappa_K\sum_{v\in\mathcal{V}(T)\setminus\{v_0\}}
  M_vN_v\min(M_v,N_v)\Big).
\]
On the TT tree, $M_{v}=r_{k-1}n_k$ and $N_{v}=P_k$ for the vertex $v$ of $\mathcal{C}_k$, and this is the cost of TT-SVD$_K$.\\

For commutative $K$, Algorithm~\ref{alg:ttn-svd} is quasi-optimal for every tree, with a factor given by the number $\#\mathcal{V}(T)-1$ of
internal edges.

\begin{theorem}[Quasi-optimality of TTN-SVD$_K$]\label{thm:ttn}
Let $K$ be a finite-dimensional, unital, very proper, commutative $*$-algebra with an inner product that satisfies the adjoint identities.
Let $T$ be a rooted tree as above, and let $\mathcal{T}$ be the TTN tensor computed from $\mathcal{A}\in K^{n_1\times\cdots\times n_d}$ by
Algorithm~\ref{alg:ttn-svd}. 
If the algorithm is run with TTN sizes $(r_v)_{v\in\mathcal{V}(T)\setminus\{v_0\}}$, then
\begin{equation}\label{eq:ttn-bound}
  \|\mathcal{A}-\mathcal{T}\|
  \le\sqrt{\#\mathcal{V}(T)-1}\,\|\mathcal{A}-\mathcal{T}^\star\|,
\end{equation}
where $\mathcal{T}^\star$ is a best TTN approximation of $\mathcal{A}$ on $T$ with those TTN sizes.
If the root has no external edges and exactly two children $u$ and $w$ with $r_u=r_w$, then $\sqrt{\#\mathcal{V}(T)-1}$ can be replaced by $\sqrt{\#\mathcal{V}(T)-2}$.
\end{theorem}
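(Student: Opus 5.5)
We follow the classical hierarchical-SVD argument (Grasedyck; Hackbusch), using Theorem~\ref{thm: optimality of rank-k SVD approximation} for each edge. Commutativity is used to make every step an orthogonal projection of the original tensor along one edge cut. The plan has four steps.

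\emph{Step 1: projections.} For each internal edge, i.e.\ each vertex $v\neq v_0$, let $t_v\subseteq\{1,\dots,d\}$ be the set of external edges below $v$, and let $\calA_{(t_v)}$ be the matricization with rows indexed by $t_v$. Using commutativity, I would rewrite the output as
\[
\calT=P_{b}\cdots P_{1}\calA .
\]
Here the $v$ run in the order of $T$, and each $P_v$ is the orthogonal projector that acts on the $t_v$-matricization as left multiplication by $\widetilde U_v\widetilde U_v^H$. The matrix $\widetilde U_v\in K^{(\prod_{\mu\in t_v}n_\mu)\times r_v}$ is obtained by contracting $C_v$ with the already computed cores of the descendants of $v$.

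\emph{Step 2: orthogonality of the projectors.} The projectors need three properties.
\begin{enumerate}
\item Each $\widetilde U_v$ has orthonormal columns. This is proved by induction over the tree, since $U^H U=I$ is preserved under contraction with cores whose $C^H C=I$.
\item Each $P_v$ is self-adjoint and idempotent in the Frobenius inner product. This follows from the adjoint identities, which give $\langle UX,Y\rangle=\langle X,U^HY\rangle$ entrywise.
\item Projectors for nested or disjoint $t_v$ commute. This is where commutativity of $K$ enters, exactly as in Theorem~\ref{cor: optimality for Tucker decompositions}. Left multiplications acting on different index groups commute only if products of entries can be reordered, and Example~\ref{ex: HOSVD fails} shows this fails otherwise.
\end{enumerate}
Since the edge sets of a tree form a hierarchy, all $P_v$ then commute and are orthogonal.

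\emph{Step 3: error splitting.} Next I would check that at the step for vertex $v$, the matrix $\bar Y$ equals the $t_v$-matricization of $(\prod_{u\text{ before }v}P_u)\calA$, expressed in the reduced coordinates, up to the isometries $\widetilde U_u$ of the children. Hence $C_v$ gives leading left singular vectors of the matricization of a projected tensor. With commuting orthogonal projectors, the standard telescoping gives
\[
\|\calA-\calT\|^2\le\sum_v\|(I-P_v)\calA\|^2 .
\]
To bound each term, note that $P_v$ is computed from $Q\calA$ for a product $Q$ of commuting orthogonal projectors, and $Q$ commutes with the left projector. Using the Eckart--Young optimality (Theorem~\ref{thm: optimality of rank-k SVD approximation}) applied to $Q\calA$, each term satisfies
\[
\|(I-P_v)Q\calA\|\le\min_{\mathrm{rank}_K B\le r_v}\|\calA_{(t_v)}-B\|,
\]
because $Q$ is a contraction that preserves the rank bound. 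If the telescoping involves $\|(I-P_v)Q\calA\|$ rather than $\|(I-P_v)\calA\|$, I would carry out the chain of equalities in the order of $T$ to obtain the sum with the projected tensors.

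\emph{Step 4: comparison with the best approximation.} Every TTN tensor $\calT^\star$ with sizes $(r_v)$ has $\mathrm{rank}_K(\calT^\star_{(t_v)})\le r_v$, by grouping the contraction of the cores below $v$ against the rest; this grouping needs commutativity to reorder the product in Eq.~\eqref{eq:ttn}. Hence each term is at most $\|\calA-\calT^\star\|^2$, and summing over the $\#\mathcal{V}(T)-1$ edges gives Eq.~\eqref{eq:ttn-bound}.

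\emph{Refinement.} If the root has no external edges and exactly two children $u,w$ with $r_u=r_w$, then $t_u$ and $t_w$ are complementary, so the matricizations satisfy $\calA_{(t_w)}=\calA_{(t_u)}^{T}$. The projection at $w$ is then redundant: after $P_u$, the projected tensor has rank at most $r_u=r_w$ across this cut, so the truncation at $w$ is exact and contributes zero error. Only $\#\mathcal{V}(T)-2$ terms remain.

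The main obstacle is Step 3, namely identifying $\bar Y$ with a projected tensor across the tree and proving the projectors commute over $K$. I would first verify commutation on the Tucker star, reusing the structure of the argument for Theorem~\ref{cor: optimality for Tucker decompositions}, and then induct on the tree.
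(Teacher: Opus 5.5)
Your plan is essentially the paper's argument: commutativity of $K$ makes each iteration an orthogonal projection along one internal edge, so $\|\calA-\calT\|^2$ is the sum of the errors discarded at the $\#\mathcal{V}(T)-1$ edges. Each of these is bounded by Eckart--Young over $K$, applied to the already-projected unfolding, together with $\mathrm{rank}_K(\calT^\star_{(t_v)})\le r_v$, and nothing is discarded at the second root child.

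One correction: commit from the start to the exact sequential identity $\|\calA-\calT\|^2=\sum_v\|(I-P_v)P_{v-1}\cdots P_1\calA\|^2$. The alternative bound by $\sum_v\|(I-P_v)\calA\|^2$ does not work, because $P_v$ is optimal for the projected unfolding rather than for $\calA_{(t_v)}$, and $\|(I-P_v)\calA\|^2$ also contains the errors already discarded at the descendants of $v$.
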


We prove Theorem~\ref{thm:ttn} in Appendix~\ref{app: proof of corollaries}. Theorem~\ref{thm:ttn} gives the bound $\sqrt{d-1}$ of
Theorem~\ref{cor: optimality for TT-SVD} on the TT tree, which has $d$ vertices, and the bound $\sqrt{d}$ of Theorem~\ref{cor: optimality for Tucker decompositions} on the Tucker tree, which has $d+1$ vertices.
The trees of the hierarchical Tucker decomposition have $2d-1$ vertices and a root with two children of equal
size~\cite[Eq.~(11.51)]{hackbusch2019}, and Theorem~\ref{thm:ttn} gives the bound $\sqrt{2d-3}$ of~\cite[Thm.~11.66]{hackbusch2019}.

\section{Numerical Examples}\label{sec:experiments}
In this section, we compare decompositions over $K$ with standard decompositions over $\R$ or $\C$ on simulated and real data. The \textbf{compression ratio} is the number of floating-points numbers needed to store the data divided by the number of floating-points numbers needed to store its
factored approximation, counting a complex number as two real numbers. For the decompositions over $K$, we store the factors in Wedderburn–Artin coordinates with component-specific ranks, omitting zero
padding; reported storage counts refer to this representation. The larger the ratio, the stronger the compression. The \textbf{relative error} of an approximation $\widehat\calA$ of $\calA$ is $\|\calA-\widehat\calA\|/\|\calA\|$. In each experiment, we plot the compression ratio against the relative error. 

Our code is available at \url{https://github.com/paulinahoyos/tensor_decompositions}.

\subsection{Quantum-circuit data}
We compress simulated data from noisy three-qubit quantum circuits, which are represented by elements of $K=\M_8(\C)$~\cite{NielsenChuang2010}. 

We split each circuit into two halves. Each half consists of four two-qubit gates, acting alternately on qubits $1$ and $2$ and on qubits $2$ and $3$, each drawn from the Haar measure on the $4\times4$ unitary group; this is a standard model of random quantum circuits~\cite{BrandaoHarrowHorodecki2016,FisherKhemaniNahumVijay2023}. Real gates carry small coherent errors driven by a slowly varying calibration offset shared by all gates~\cite{WallmanEmerson2016}; we model this by following each gate with a small rotation error whose amplitude is proportional to a common offset $\xi\sim\mathcal{N}(0,1)$.

We generate $500$ random circuits and arrange the data as a $500\times500$ matrix $A$ over $K$ with entries
\[
  A_{ij} = \mathbb{E}_{\xi}\bigl[U^{(2)}_i(\xi)\,U^{(1)}_j(\xi)\bigr],
\]
where $U^{(2)}_i(\xi)$ and $U^{(1)}_j(\xi)$ are the unitaries of the $i$-th second half and the $j$-th first half, and the expectation is computed by Gauss--Hermite quadrature with $20$ nodes. For a fixed $\xi$, $A$ has rank one over $K$; since the errors are small, the terms for different $\xi$ are close, and $A$ is close to a matrix of low rank over $K$.

We compare the SVD over $K$ with the SVDs of the mode-$1$ and mode-$2$ unfoldings of $A$, viewed as a $500\times500\times64$ complex tensor, and with the HOSVD of that tensor (Figure~\ref{fig:circuits}). At every relative error between $10^{-7}$ and $10^{-2}$, the SVD over $K$ compresses roughly $5$ to $15$ times more than the HOSVD and $30$ times more than the SVDs of the unfoldings, and it reaches the single-precision accuracy of the data with about four columns over $K$.
\begin{figure}[h!]
    \centering
    \includegraphics[scale=0.9]{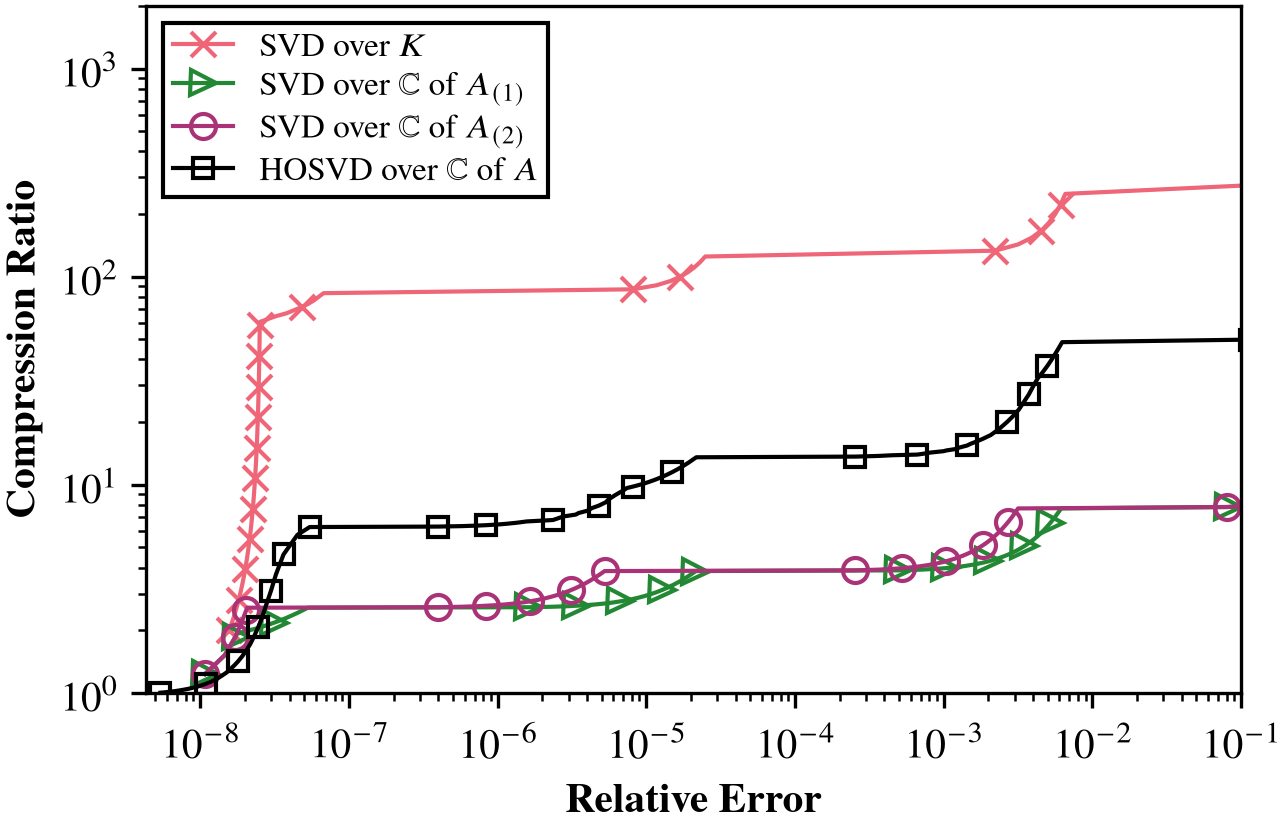}
    \caption{Compression ratio against relative error for the quantum-circuit data. The SVD over $K=\M_8(\C)$ compresses roughly an order of magnitude more than the HOSVD over $\C$, and more than the SVDs of the unfoldings $A_{(1)}$ and $A_{(2)}$, at every error above the single-precision floor.}
\label{fig:circuits}
\end{figure}

\subsection{Vlasov--Poisson data}
We compress simulated solutions of the  Vlasov--Poisson equations from kinetic theory, which describe a collisionless plasma in which electrons interact only through the electric field they create. The phase-space density of the electrons $f(x_1,x_2,v_1, v_2, t)$ is transported by the electric field $E(x_1,x_2,t)$, which is in turn determined by the electron density through Gauss's law: 
\begin{equation}\label{eq: vlasov-poisson}
    \partial_t f + v \cdot \nabla_x f - E \cdot \nabla_v f = 0, \qquad \nabla_x \cdot E = 1 - \int f \, dv.
\end{equation}
On a periodic box, these equations commute with translations in $x_1$ and $x_2$, so we take $K=\R[\Z_{32}\times\Z_{32}]$, the group algebra of translations of the $32\times32$ position grid.

Using the splitting method of~\cite{ChengKnorr1976}, we solve the equations numerically with the initial condition of the nonlinear Landau damping test (\cite{FilbetSonnendrucker2003,Kormann2015,EinkemmerLubich2018})
\[
  f(x_1, x_2, v_1, v_2, 0) = \frac{1}{2\pi}e^{-|v|^2/2}\bigl(1+\alpha\cos(kx_1)+\alpha\cos(kx_2)\bigr),
  \qquad \alpha=0.5,\ k=0.5,
\]
on the periodic box $x\in[0,4\pi)^2$, with velocities truncated to $[-6,6)^2$. We discretize position on a $32\times32$ grid and velocity on a $48\times48$ grid, and take $32$ snapshots at $t=1.25,2.5,\dots,40$.
We thus obtain a tensor $\calF$ of size ${48 \times 48 \times 32}$ over $K$, with entries 
\[ \calF(i, j, k) = f(\cdot, \cdot, v_x(i), v_y(j), t(k)), \]
where $v_x(i)$, $v_y(j)$ and $t(k)$ denote the $i$-th and $j$-th velocity grid points and the $k$-th snapshot time.

Since $K$ is commutative, we compress $\calF$ with both Tucker decompositions, computed by $\mathrm{HOSVD_K}$ and $\mathrm{ST\text{-}HOSVD}_K$. 
We compare these decompositions over $K$ with the same decompositions of the real $32\times32\times48\times48\times32$ tensor,  and with the SVD of its unfolding $F_{(x,y)}$, whose rows are indexed by position and whose columns are indexed by velocity and time (Figure~\ref{fig:vlasov}). The Tucker decompositions over $K$ compress best, with a compression ratio of about $250$ at relative error $0.1$, against $70$ for TT over $K$ and at most $50$ over $\R$. 

\begin{figure}[h!]
    \centering
    \includegraphics[scale=0.8]{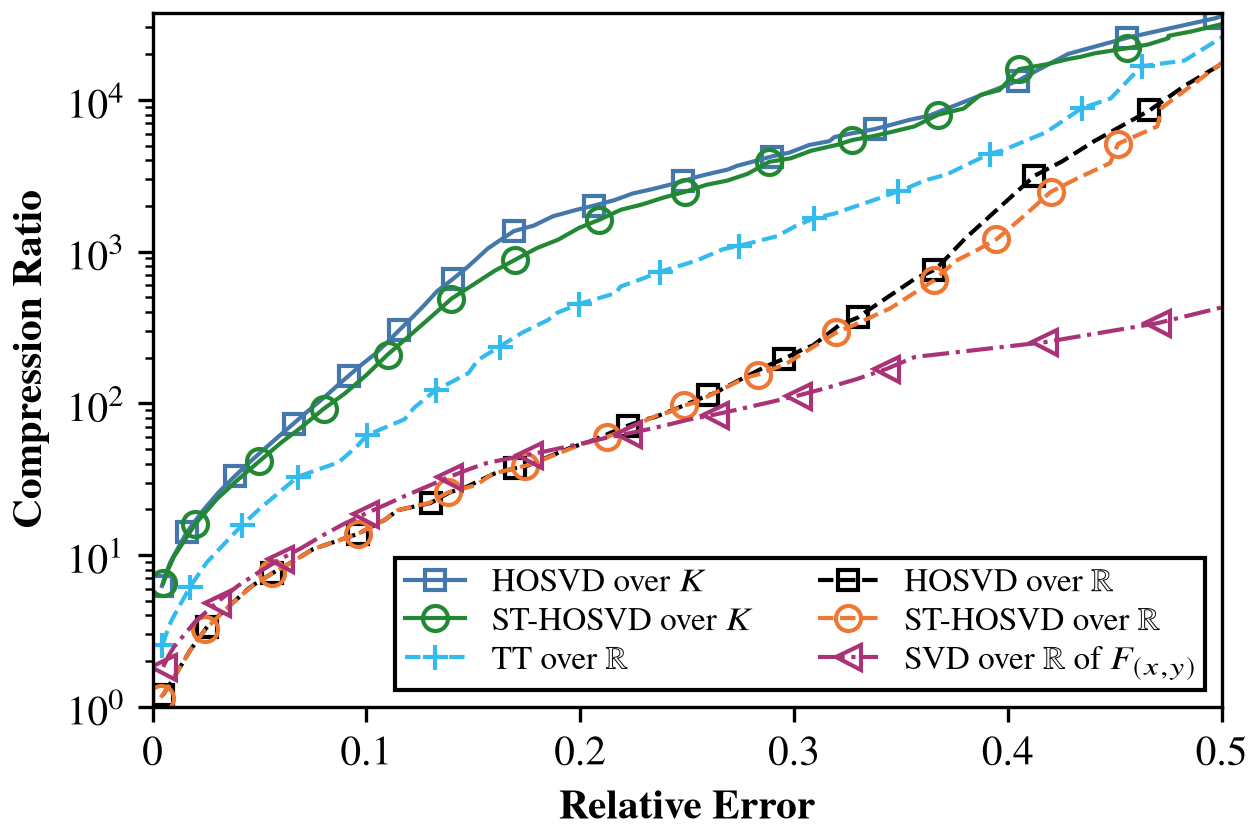}
    \caption{Compression ratio against relative error for the Vlasov--Poisson data. The Tucker decompositions over $K=\R[\Z_{32}\times\Z_{32}]$ compress about an order of magnitude more than those over $\R$ at small and moderate relative errors.}
\label{fig:vlasov}
\end{figure}

\subsection{ERA5 climate data}
We use ERA5~\cite{Hersbach2020}, a reanalysis of the Earth’s atmosphere\footnote{ 
A reanalysis combines a weather model with past observations to give the best estimate of the global state of the atmosphere at each time}
and the standard training data for machine-learning weather models~\cite{Rasp2020WB,Rasp2024WB2}.
We download ERA5 from the WeatherBench~2 archive~\cite{Rasp2024WB2}, on  a $64\times32$ grid in longitude and latitude with spacing $5.625^\circ$, and select five variables (geopotential, temperature, the two horizontal wind components and specific humidity) at
$13$ pressure levels, once a day at 00\,UTC for the $365$ days of 2019.
From each value we subtract the 1990--2019 climatology for that day of the year, which removes the seasonal
cycle, and we divide each pair of variable and level by the standard deviation of its anomalies. 
We thus obtain a real tensor $\calA$ of size $64\times32\times13\times5\times365$ with entries
\[
  \calA(i_1,i_2,j,k,l) = a\bigl(\lambda(i_1),\phi(i_2),p(j),k,d(l)\bigr),
\]
where $a$ denotes the standardized anomalies, $\lambda(i_1)$ and $\phi(i_2)$ are the $i_1$-th longitude and $i_2$-th latitude, $p(j)$ is the $j$-th pressure level, $k$ indexes the five variables, and $d(l)$ is the $l$-th day.

\paragraph{Order-$4$ tensor over $K_1=\R[\Z_{64}]$.}
Longitude is periodic, and rotating the grid about the Earth's axis by one step, $5.625^\circ$, generates the cyclic group $\Z_{64}$. Putting longitude into the algebra gives an order-$4$ tensor $\calB_1$ over $K_1=\R[\Z_{64}]$ with entries
\[ \calB_1(i_2,j,k,l) = \calA(\cdot,i_2,j,k,l). \]
We compress $\calB_1$ with ST-HOSVD, TT-SVD  and TTN-SVD over $K_1$, and compare them with the same decompositions of $\calA$ over $\R$ (Figure~\ref{fig:era5-lon}). The TTN decompositions use the trees in
Figure~\ref{fig:era5-trees}. At relative error $0.3$, every decomposition over $K_1$ compresses $1.7$ to $2.8$ times more than its counterpart over $\R$, and the gap widens at larger errors for TT and TTN. 
 At relative error $0.3$, TTN over $K_1$ reaches a compression ratio of about $19$ and TT over $K_1$ about $15$, against about $8$ for the best decomposition over $\R$, which is TTN.\\[2mm]

\begin{figure}[h]
\centering
\begin{tikzpicture}[
  core/.style={circle, draw, thick, minimum size=0.8cm, inner sep=1pt},
  line/.style={thick},
  lab/.style={font=\small}
]
  \node[core] (a4) at (2.0,2.3)  {$\mathcal{C}_4$};
  \node[core] (a3) at (1.1,1.15) {$\mathcal{C}_3$};
  \node[core] (a1) at (0.1,0)    {$\mathcal{C}_1$};
  \node[core] (a2) at (2.1,0)    {$\mathcal{C}_2$};

  \draw[line] (a1) -- (a3);
  \draw[line] (a2) -- (a3);
  \draw[line] (a4) -- node[lab, above left] {space} (a3);

  \draw[line] (a1.south) -- ++(0,-0.5)
    node[lab, midway, left] {lat};
  \draw[line] (a2.-120) -- ++(-0.3,-0.5)
    node[lab, midway, left] {pressure};
  \draw[line] (a2.-60) -- ++(0.3,-0.5)
    node[lab, midway, right] {variables};
  \draw[line] (a4.-50) -- ++(0.45,-0.55)
    node[lab, midway, right] {time};

  \begin{scope}[xshift=6.0cm]
    \node[core] (b5) at (3.1,2.3)  {$\mathcal{C}_5$};
    \node[core] (b4) at (2.2,1.15) {$\mathcal{C}_4$};
    \node[core] (b1) at (0,0)      {$\mathcal{C}_1$};
    \node[core] (b2) at (1.5,0)    {$\mathcal{C}_2$};
    \node[core] (b3) at (3.4,0)    {$\mathcal{C}_3$};

    \draw[line] (b1) -- (b4);
    \draw[line] (b2) -- (b4);
    \draw[line] (b3) -- (b4);
    \draw[line] (b5) -- node[lab, above left] {space} (b4);

    \draw[line] (b1.south) -- ++(0,-0.5)
      node[lab, midway, left] {lon};
    \draw[line] (b2.south) -- ++(0,-0.5)
      node[lab, midway, left] {lat};
    \draw[line] (b3.-120) -- ++(-0.3,-0.5)
      node[lab, midway, left] {pressure};
    \draw[line] (b3.-60) -- ++(0.3,-0.5)
      node[lab, midway, right] {variables};
    \draw[line] (b5.-50) -- ++(0.45,-0.55)
      node[lab, midway, right] {time};
  \end{scope}
\end{tikzpicture}
\caption{Trees of the TTN decompositions in Figure~\ref{fig:era5-lon} over $K_1$ (left) and over
$\mathbb{R}$ (right). External edges are labeled by mode, and the cores are
numbered in the order of the tree.}
\label{fig:era5-trees}
\end{figure}

\begin{figure}[h!]
    \centering
    \includegraphics[scale=0.9]{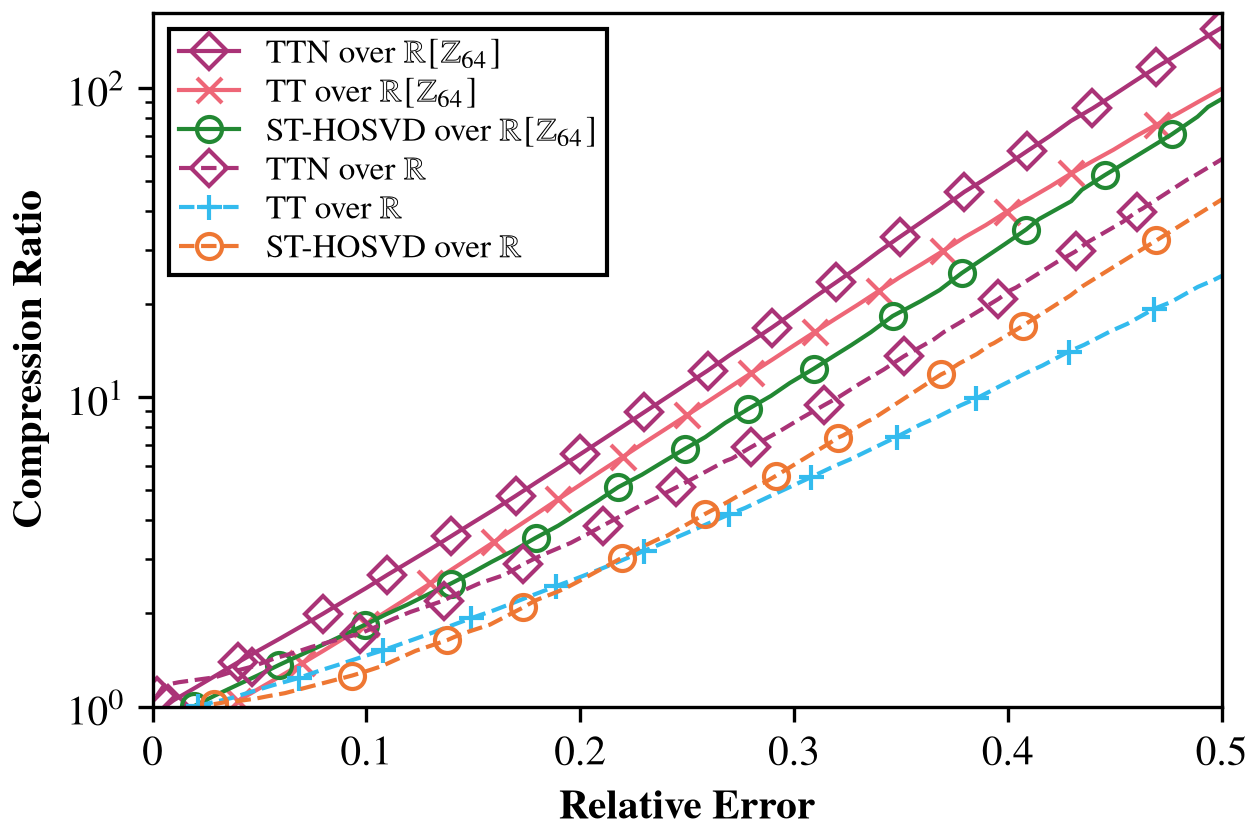}
    \caption{Compression ratio against relative error for the ERA5 data with longitude in the algebra, $K_1=\R[\Z_{64}]$.}
    \label{fig:era5-lon}
\end{figure}

\paragraph{Order-$3$ tensor over $K_2=\R[\Z_{64}]\otimes\mathrm{DCT}_{32}$.}
We next put latitude into the algebra as well. Latitude runs from pole to pole and is not periodic, so we use $\mathrm{DCT}_{32}$, the $\star_M$ algebra in Example~\ref{ex: tubal algebras} with $M$ the orthogonal discrete cosine transform matrix, the standard transform for smooth non-periodic signals. Over $K_2=\R[\Z_{64}]\otimes\mathrm{DCT}_{32}$, each entry is a longitude--latitude map, and the data form an order-$3$ tensor $\calB_2$ with entries
\[ \calB_2(j,k,l) = \calA(\cdot,\cdot,j,k,l). \]
 We compress $\calB_2$ with TT-SVD, HOSVD and ST-HOSVD over $K_2$ and compare them with the same decompositions over $\R$ (Figure~\ref{fig:era5-lonlat}). TT and ST-HOSVD over $K_2$ perform almost identically: both compress more than HOSVD over $K_2$, and two to three times more than the decompositions over $\R$.
\begin{figure}[h!]
    \centering
    \includegraphics[scale=0.9]{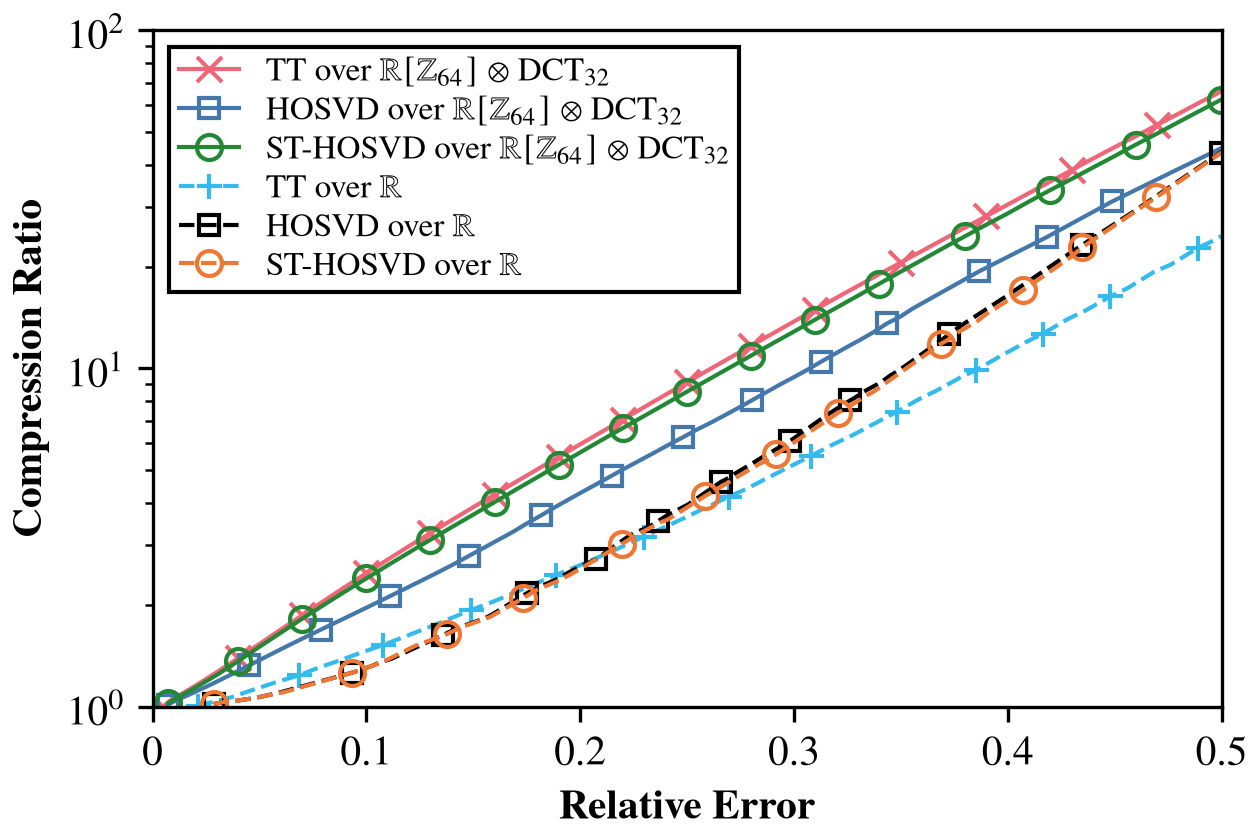}
    \caption{Compression ratio against relative error for the ERA5 data with longitude and latitude in the algebra, $K_2=\R[\Z_{64}]\otimes\mathrm{DCT}_{32}$.}
    \label{fig:era5-lonlat}
\end{figure}

\section{Conclusions}
In this paper, we developed a unified theory of the singular value decomposition over finite-dimensional $\ast$-algebras. 
We characterized when every matrix admits an SVD and when ordered SVD truncations give optimal low-rank approximations as in the Eckart--Young theorem. 
Our theory brings together constructions previously developed one algebra at a time, and yields a family of new decomposition formats for real and complex tensors.

Computationally, we extended SVD-based algorithms for Tucker, tensor train, and tree tensor network decompositions to $\ast$-algebras. 
The tensor train SVD algorithm is quasi-optimal in terms of approximation error for noncommutative algebras satisfying our conditions.  
Meanwhile, approximation guarantees for Tucker and tree tensor networks hold in the commutative case.  
Computational experiments on datasets from quantum circuit, plasma, and climate applications showed that the new decompositions can achieve better compression than classical approaches.

Several questions remain open. One is whether quasi-optimal Tucker and tree tensor network algorithms can be developed for noncommutative algebras. 
Another important question is how to choose or learn an appropriate algebra when the dataset doesn't suggest a clear choice of algebra.  
For the $\star_M$-product, Newman and Keegan~\cite{newman2024optimal} jointly learn an orthogonal transform $M$ and a tensor representation using variable projection.  
Their work could offer ideas on how to learn more general $\ast$-algebra structures from data.  
Finally, we think that extending our framework to randomized SVD-based algorithms is another promising  direction.

\subsection*{Acknowledgments}
This research was supported in part by NSF DMS 2309782, NSF DMS 2436499, NSF CISE-IIS 2312746, DE SC0025312, and the Sloan Foundation.

\subsection*{AI Use Disclosure}
The main ideas of this work are the authors' own: extending the SVD to matrices over arbitrary $\ast$-algebras, the sufficient condition for
its existence given by the Wedderburn--Artin decomposition,  the necessary and sufficient condition for the optimality of truncated SVDs, the extension to tensor decompositions, and the design of all algorithms in the paper.
We used Claude for literature search, which helped us find the notions of nonnegative cone and very proper $\ast$-algebras. 
In discussions driven by the authors' questions, Claude generated examples, helped formulate the notion of ordered SVDs and their characterization, and suggested proofs of Propositions~\ref{prop: characterization of ordered SVDs} and~\ref{prop:rank}, which we then rewrote.
For the manuscript, we used Claude and ChatGPT to double-check proofs, revise the text for notation, consistency and clarity, and produce the TikZ figures. 
Finally,  Claude drafted and debugged code implementing our  algorithms and wrote its documentation.
Each line of AI-assisted work has been examined and revised by the authors, and we take  responsibility for the final content of this
work.

\bibliography{references.bib}
\bibliographystyle{abbrvurl}

\appendix

\section{Proof of Proposition~\ref{prop: characterization of ordered SVDs}}\label{app: proof for ordered SVDs}
Recall from Section~\ref{sec: algorithm for svd} that $a\in K^+$ if and only if every $\pi_\rho(a)$ is Hermitian positive semidefinite. Moreover, $c\in K$ is self-adjoint and central if and only if $\pi_\rho(c)=\alpha_\rho I_{d_\rho}$ with $\alpha_\rho\in\R$ for every $\rho$, and the $\alpha_\rho$ can be chosen independently.

\smallskip\noindent\underline{$(1)\Leftrightarrow(2)$}: Let $A=U\Sigma V^H$ be an SVD, and fix $1\le i<s$. By the facts above, there is a self-adjoint central element $c_i$ with $\Sigma_{ii}\succeq c_i\succeq\Sigma_{i+1,i+1}$ if and only if, for every $\rho$, there is $\alpha_{i\rho}\in\R$ such that
\begin{equation}
    \pi_\rho(\Sigma_{ii})-\alpha_{i\rho}I_{d_\rho}
    \quad\text{and}\quad
    \alpha_{i\rho}I_{d_\rho}-\pi_\rho(\Sigma_{i+1,i+1})
\end{equation}
are Hermitian positive semidefinite, that is, all their eigenvalues are nonnegative. Since $\pi_\rho(\Sigma_{ii})$ and $\pi_\rho(\Sigma_{i+1,i+1})$ are positive semidefinite, their eigenvalues are their singular values, so this holds exactly when $\sigma_{\min}(\pi_\rho(\Sigma_{ii}))\ge\alpha_{i\rho}\ge\sigma_{\max}(\pi_\rho(\Sigma_{i+1,i+1}))$. Such an $\alpha_{i\rho}$ exists if and only if Eq.~\eqref{eq: ordering of singular values under pi} holds.

\smallskip\noindent\underline{$(2)\Rightarrow(3)$}: Let $A=U\Sigma V^H$ be an SVD satisfying Eq.~\eqref{eq: ordering of singular values under pi}. Each $\pi_\rho(\Sigma_{ii})$ is Hermitian positive semidefinite, so by the spectral theorem there is a unitary matrix $W_{i\rho}$ such that $W_{i\rho}^H\pi_\rho(\Sigma_{ii})W_{i\rho}$ is diagonal, with the singular values of $\pi_\rho(\Sigma_{ii})$ in nonincreasing order. Let $w_i=\pi^{-1}(\bigoplus_\rho W_{i\rho})$, which is unitary, and let $W_m$ and $W_n$ be as in the proposition. Then $UW_m$ and $VW_n$ are unitary, and $W_m^H\Sigma W_n$ is diagonal with entries $w_i^\ast\Sigma_{ii}w_i\in K^+$, so $A=(UW_m)(W_m^H\Sigma W_n)(VW_n)^H$ is an SVD. For each $\rho$, $\Pi_\rho(W_m^H\Sigma W_n)$ is diagonal, and its diagonal is the concatenation of the sorted singular values of $\pi_\rho(\Sigma_{11}),\dots,\pi_\rho(\Sigma_{ss})$, followed by zeros; by Eq.~\eqref{eq: ordering of singular values under pi}, it is nonincreasing. Hence each $\widehat A_\rho=\Pi_\rho(UW_m)\,\Pi_\rho(W_m^H\Sigma W_n)\,\Pi_\rho(VW_n)^H$ is a classical SVD with nonincreasing singular values, and the SVD is induced by $\pi$.

\smallskip\noindent\underline{$(3)\Rightarrow(2)$}: Assume that $A=(UW_m)(W_m^H\Sigma W_n)(VW_n)^H$ is an SVD induced by $\pi$. For each $\rho$, $\Pi_\rho(W_m^H\Sigma W_n)$ is diagonal with nonincreasing nonnegative entries. Its $i$-th diagonal block is $\pi_\rho(w_i^\ast\Sigma_{ii}w_i)=\pi_\rho(w_i)^H\pi_\rho(\Sigma_{ii})\pi_\rho(w_i)$, which is unitarily similar to $\pi_\rho(\Sigma_{ii})$, so its entries are the singular values of $\pi_\rho(\Sigma_{ii})$. Since the whole diagonal is nonincreasing, $\sigma_{\min}(\pi_\rho(\Sigma_{ii}))\ge\sigma_{\max}(\pi_\rho(\Sigma_{i+1,i+1}))$ for each $1\le i<s$.

\section{Proof of Proposition~\ref{prop:rank}}\label{app: proof for rank}
Write $R=\max_{\rho}\lceil r_\rho/d_\rho\rceil$, and, for an SVD $A=U\Sigma V^H$, let $n(\Sigma)$ be the number of nonzero diagonal entries of $\Sigma$. We prove that
\[
  \mathrm{rank}_K(A)\ \ge\ R\ =\ n(\widetilde\Sigma)\ \ge\ \min n(\Sigma)\ \ge\ \mathrm{rank}_K(A),
\]
where $\widetilde\Sigma$ comes from an arbitrary ordered SVD and the minimum runs over all SVDs of $A$.

Let $k=\mathrm{rank}_K(A)$, so that $A=PQ$ with $P\in K^{m\times k}$ and $Q\in K^{k\times n}$. Then $\widehat A_\rho=\Pi_\rho(P)\Pi_\rho(Q)$, where $\Pi_\rho(P)$ has $kd_\rho$ columns, so $r_\rho\le kd_\rho$. Hence $k\ge\lceil r_\rho/d_\rho\rceil$ for every $\rho$, and $k\ge R$.

Let $s=\min(m,n)$, and denote the singular values of $\widehat A_\rho$ by $\sigma_1^{(\rho)}\ge\cdots\ge\sigma_{sd_\rho}^{(\rho)}\ge0$. Let $A=\widetilde U\widetilde\Sigma\widetilde V^H$ be an ordered SVD. By Proposition~\ref{prop: characterization of ordered SVDs}, conjugating each $\widetilde\Sigma_{ii}$ by a unitary element gives an SVD induced by $\pi$, in which the diagonal of the $\rho$-th component is $(\sigma_1^{(\rho)},\dots,\sigma_{sd_\rho}^{(\rho)})$. Conjugation preserves singular values, so the singular values of $\pi_\rho(\widetilde\Sigma_{ii})$ are $\sigma_{(i-1)d_\rho+1}^{(\rho)}\ge\cdots\ge\sigma_{id_\rho}^{(\rho)}$. Therefore $\pi_\rho(\widetilde\Sigma_{ii})\ne0$ exactly when $\sigma_{(i-1)d_\rho+1}^{(\rho)}\ne0$, that is, when $(i-1)d_\rho+1\le r_\rho$, or equivalently $i\le\lceil r_\rho/d_\rho\rceil$. Since $\pi$ is injective, $\widetilde\Sigma_{ii}\ne0$ if and only if $\pi_\rho(\widetilde\Sigma_{ii})\ne0$ for some $\rho$, that is, if and only if $i\le R$. Finally, $r_\rho\le sd_\rho$ gives $R\le s$, so $\widetilde\Sigma$ has exactly $R$ nonzero diagonal entries, $\widetilde\Sigma_{11},\dots,\widetilde\Sigma_{RR}$.

Trivially, $n(\widetilde\Sigma)\ge\min n(\Sigma)$. Finally, any SVD $A=U\Sigma V^H$ writes $A$ as the sum of the $n(\Sigma)$ outer products $(u_i\Sigma_{ii})v_i^H$ with $\Sigma_{ii}\ne0$, where $u_i$ and $v_i$ are the $i$-th columns of $U$ and $V$, so $n(\Sigma)\ge\mathrm{rank}_K(A)$.

\section{Proof of Theorems~\ref{cor: optimality for Tucker decompositions},~\ref{cor: optimality for TT-SVD} and~\ref{thm:ttn}}
\label{app: proof of corollaries}

The proofs of Theorems~\ref{cor: optimality for Tucker decompositions}, \ref{cor: optimality for TT-SVD} and~\ref{thm:ttn} follow the proofs for
real tensors: 
the error bound of De Lathauwer et al.~\cite{DeLathauwer2000} for the HOSVD, the error identity and quasi-optimality of Vannieuwenhoven
et al.~\cite{Vannieuwenhoven2012} and Hackbusch~\cite{hackbusch2019} for
the HOSVD and ST-HOSVD, the error identity and quasi-optimality of
Oseledets~\cite{Oseledets2011} for the TT-SVD, and the leaves-to-root truncation of Hackbusch~\cite[Thm.~11.66]{hackbusch2019} for the TTN-SVD. See
also~\cite[Chapters~7 and~8]{BallardKolda2025}. 
These proofs use only the optimality of the truncated SVD, which
holds over $K$ by Theorem~\ref{thm: optimality of rank-k SVD approximation},
and the following properties of matrices over $K$.
 
\begin{proposition}\label{prop: matrix properties}
    Assume that the inner product on $K$ satisfies the adjoint identities (Definition~\ref{axiom for K}). Then matrices over $K$ satisfy the following properties.
    \begin{enumerate}
        \item Adjoint identities:
        \begin{equation}
        \langle AB, C \rangle = \langle B, A^H C \rangle \quad \text{ and } \quad \langle AB, C \rangle = \langle A,  C B^H \rangle
    \end{equation}
    for matrices of compatible sizes $A \in \A^{m \times p}, B \in \A^{p \times n}, C \in \A^{m \times n}$.
         \item Unitary invariance: Given a unitary matrix $U \in \A^{m\times m}$, then
         \begin{align}
             &\|A U\| = \|A\|, 
         \end{align}
         for any matrix $A \in \A^{p\times m}$, and
         \begin{align}
             &\|UB \| = \|B\|
         \end{align}
         for any matrix $B \in K^{m \times n}$.
                 \item Isometry invariance: Given a matrix $U \in \A^{m\times n}$ with $U^HU = I$,
        \begin{equation}
            \|UB\| = \|B\|
        \end{equation}
        for any matrix $B \in K^{n \times p}$.
        \item Orthogonal projector properties: Given a matrix $U \in \A^{m\times n}$ with $UU^H U = U$ (for example, with orthonormal columns), then
        \begin{enumerate}
            \item  For any matrices $A, B \in \A^{m\times p}$,
            \begin{equation}\label{eq: orthogonality of projection residual}
            \langle U U^H A,\;
          (I - U U^H) B \rangle = 0,
        \end{equation}
        \item For $A \in K^{m \times p}$,
        \begin{equation}\label{eq: pythagoras for projector}
            \|A\|^2 = \| U U^H A \|^2 + \|(I - UU^H)A\|^2
        \end{equation}

        \end{enumerate}
       
    \end{enumerate}
\end{proposition}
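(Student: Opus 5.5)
The plan is to reduce everything to the entrywise adjoint identities and then derive the four properties in order, each from the previous ones.

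\emph{Property (1).} Expand the matrix inner product entrywise. For the first identity,
\[
\langle AB,C\rangle=\sum_{i,j}\sum_k\langle A_{ik}B_{kj},C_{ij}\rangle=\sum_{i,j,k}\langle B_{kj},A_{ik}^\ast C_{ij}\rangle ,
\]
using $\langle ab,c\rangle=\langle b,a^\ast c\rangle$ together with bilinearity (sesquilinearity when $F=\C$) of the inner product. Regrouping the sum over $i$ gives $\sum_{k,j}\langle B_{kj},(A^HC)_{kj}\rangle=\langle B,A^HC\rangle$, since $(A^H)_{ki}=A_{ik}^\ast$. The second identity follows the same way from $\langle ba,c\rangle=\langle b,ca^\ast\rangle$: each term $\langle A_{ik}B_{kj},C_{ij}\rangle$ equals $\langle A_{ik},C_{ij}B_{kj}^\ast\rangle$, and summing over $j$ produces $(CB^H)_{ik}$. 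The only care needed is that the involution is conjugate-linear while the inner product is conjugate-linear in its second slot, so the scalars match. This is the one place where real bookkeeping is required, but it is routine.

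\emph{Properties (3) and (2).} For isometry invariance, if $U^HU=I$, then by (1)
\[
\|UB\|^2=\langle UB,UB\rangle=\langle B,U^HUB\rangle=\|B\|^2 .
\]
Left unitary invariance is the special case $m=n$. For right invariance, use the second identity in (1): $\langle AU,AU\rangle=\langle A,AUU^H\rangle=\|A\|^2$, since $UU^H=I$.

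\emph{Property (4).} For (a), apply (1) to get
\[
\langle UU^HA,(I-UU^H)B\rangle=\langle U^HA,\,U^H(I-UU^H)B\rangle ,
\]
and $U^H-U^HUU^H=(U-UU^HU)^H=0$, which uses only the hypothesis $UU^HU=U$. For (b), write $A=UU^HA+(I-UU^H)A$ and expand $\|A\|^2$. The cross terms vanish: one by (a), the other by conjugate symmetry of the inner product applied to (a) (with the roles giving $\langle (I-UU^H)A,UU^HA\rangle=\overline{\langle UU^HA,(I-UU^H)A\rangle}=0$). This yields the Pythagorean identity.

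I do not expect a genuine obstacle here. The main point to verify carefully is that noncommutativity never forces a reordering: every step uses only the two one-sided adjoint identities in exactly the orientation in which the products appear.
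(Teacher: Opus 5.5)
Your proof is correct and follows essentially the same route as the paper: an entrywise expansion using the two one-sided adjoint identities for (1), and then (2)--(4) as one-line consequences of (1), the hypothesis $UU^HU=U$, and conjugate symmetry. The differences are cosmetic. In (4a) you move only $U$ across the inner product, whereas the paper moves $UU^H$. You also obtain left unitary invariance as the special case $m=n$ of (3), where the paper argues it directly.
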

\begin{proof}
All properties follow directly from the adjoint identities (Definition~\ref{axiom for K}) and the definition of the inner product for matrices over $K$ given in Eq.~\eqref{eq: inner product of matrix}.
\begin{enumerate}
    \item The inner product of the $m \times n$ matrices $AB$ and $C$ is given by
    \begin{align}
\begin{split}
     \langle AB, C \rangle
     &= \sum_{i=1}^m \sum_{j=1}^n \sum_{k=1}^p \langle A_{ik}B_{kj}, C_{ij} \rangle \\
     &= \sum_{i=1}^m \sum_{j=1}^n \sum_{k=1}^p \langle B_{kj}, A_{ik}^\ast C_{ij} \rangle\\
     &= \sum_{i=1}^m \sum_{j=1}^n \sum_{k=1}^p \langle B_{kj}, (A^H)_{ki} C_{ij} \rangle\\
     &= \langle B, A^HC \rangle.
     \end{split}
\end{align}
The same argument applies to the other identity.

\item By the adjoint identities, we have that 
\begin{equation}
    \| AU \|^2 = \langle AU, AU \rangle = \langle A , AU U^H \rangle = \langle A, A \rangle = \|A\|^2,
\end{equation}
and similarly for $B$.

\item By the adjoint identities and $U^HU = I$,
\begin{equation}
    \|UB\|^2 = \langle UB, UB \rangle = \langle B, U^HUB \rangle = \langle B, B \rangle = \|B\|^2 .
\end{equation}

\item Since $UU^H U =U$, it follows that
\begin{align}
\begin{split}
    \langle U U^H A,\;(I - U U^H) B \rangle 
    &= \langle A, UU^H (I - UU^H) B \rangle \\
    &= \langle A, (UU^H - U(U^HU)U^H) B \rangle \\
    &= \langle A, (UU^H - UU^H) B \rangle \\
    &=0.
    \end{split}
\end{align}
Taking $B = A$, and using also
$\langle (I - UU^H)A,\, UU^H A \rangle = \overline{\langle UU^H A,\, (I - UU^H)A \rangle} = 0$, we obtain
\begin{align}
\begin{split}
    \|A\|^2 &= \|UU^H A + (I - UU^H) A\|^2 \\
    &= \|UU^H A\|^2 + \|(I - UU^H) A\|^2
       + \langle UU^H A,\, (I - UU^H) A \rangle + \langle (I - UU^H) A,\, UU^H A \rangle \\
    &= \|UU^H A\|^2 + \|(I - UU^H) A\|^2 .
\end{split}
\end{align}

\end{enumerate}
\end{proof}

The proofs for HOSVD, ST-HOSVD and TTN-SVD reorder products in distinct
modes, which requires $K$ to be commutative. The proof for TT-SVD does
not require commutativity, since a TT approximation multiplies its cores
in a fixed order. For TTN-SVD, the vertices are processed one at a time
in the order of $T$, instead of level by level
as in~\cite[Thm.~11.66]{hackbusch2019}. Each iteration of
Algorithm~\ref{alg:ttn-svd} applies an orthogonal projection to the
unfolding along one internal edge, so $\|\calA-\calT\|^2$ is the sum of
the errors discarded at the $\#\mathcal{V}(T)-1$ internal edges, and each
of these is at most $\|\calA-\calT^\star\|^2$. When the root has no
external edges and two children $u$ and $w$ with $r_u=r_w$, the
iteration at the second of them discards
nothing~\cite[Rem.~11.59(c)]{hackbusch2019}, which removes one term.
As an illustration, we give the proof for $\mathrm{HOSVD}_K$ in
Section~\ref{sec: proof HOSVD}.

\subsection{Proof for $\mathrm{HOSVD}_K$}\label{sec: proof HOSVD}
Write $P_k = U_k U_k^H$. Since $U_k U_k^H U_k = U_k$, we have $P_k^H = P_k$, $P_k^2 = P_k$ and
$P_k(I - P_k) = 0$. Because $K$ is commutative, the order in which the core is formed does not matter, and combining the products in each mode,
\[
  \calT = (\calA \times_1 U_1^H \times_2 \cdots \times_d U_d^H) \times_1 U_1 \times_2 \cdots \times_d U_d
        = \calA \times_1 P_1 \times_2 \cdots \times_d P_d .
\]
Adding and subtracting $\calA \times_1 P_1 \times_2 \cdots \times_k P_k$ for $k = 1, \dots, d-1$ gives
\[
  \calA - \calT = \sum_{k=1}^d \calA_k, \qquad
  \calA_k = \calA \times_1 P_1 \times_2 \cdots \times_{k-1} P_{k-1} \times_k (I - P_k).
\]
Let $i < j$. Applying the mode-$i$ product last, $\calA_i = \calB \times_i (I - P_i)$ and
$\calA_j = \calB' \times_i P_i$, where $\calB = \calA \times_1 P_1 \times_2 \cdots \times_{i-1} P_{i-1}$
and $\calB'$ is $\calA_j$ with its mode-$i$ product removed. By the definition of the mode-$k$ TTM
and Eq.~\eqref{eq: orthogonality of projection residual} in
Proposition~\ref{prop: matrix properties},
\[
  \langle \calA_i, \calA_j \rangle
  = \langle (I - P_i)\calB_{(i)},\, P_i \calB'_{(i)} \rangle = 0 ,
\]
and hence also $\langle \calA_j, \calA_i \rangle = \overline{\langle \calA_i, \calA_j \rangle} = 0$.
Therefore
\[
  \|\calA - \calT\|^2 = \sum_{k=1}^d \|\calA_k\|^2 .
\]
Applying the mode-$k$ product first,
$\calA_k = (\calA \times_k (I - P_k)) \times_1 P_1 \times_2 \cdots \times_{k-1} P_{k-1}$. By
Eq.~\eqref{eq: pythagoras for projector} in Proposition~\ref{prop: matrix properties}, an orthogonal
projection applied in one mode does not increase the norm, so
\[
  \|\calA_k\| \le \|\calA \times_k (I - P_k)\| = \|\calA_{(k)} - P_k \calA_{(k)}\| .
\]
The $\mathrm{HOSVD}_K$ algorithm takes $U_k$ to be $r_k$ leading left singular vectors of $\calA_{(k)}$, so by
Theorem~\ref{thm: optimality of rank-k SVD approximation}, $P_k \calA_{(k)}$ is a best approximation
of $\calA_{(k)}$ of $K$-rank at most $r_k$ (it is the rank-$r_k$ truncation of an ordered SVD of $\calA_{(k)}$). Writing
$\calT^\star = \calC^\star \times_1 U_1^\star \times_2 \cdots \times_d U_d^\star$ and applying the
mode-$k$ product last, $\calT^\star_{(k)} = U_k^\star M$ for some matrix $M$, so $\calT^\star_{(k)}$
has $K$-rank at most $r_k$. Hence
\[
  \|\calA_{(k)} - P_k \calA_{(k)}\| \le \|\calA_{(k)} - \calT^\star_{(k)}\| = \|\calA - \calT^\star\| .
\]
Summing over $k$,
$\|\calA - \calT\|^2 = \sum_{k=1}^d \|\calA_k\|^2 \le d\,\|\calA - \calT^\star\|^2$.  If $\mathrm{HOSVD}_K$ is run with tolerance $\varepsilon$, then $\|\calA_{(k)}-P_k\calA_{(k)}\|^2\le\varepsilon^2\|\calA\|^2/d$ by Remark~\ref{rem: LLSV tolerance}, so $\|\calA-\calT\|\le\varepsilon\|\calA\|$.

\end{document}